\documentclass[10pt,reqno]{amsart}

\usepackage[sort]{cite}
\usepackage{amsfonts} 
\usepackage{amsmath} 
\usepackage{amssymb} 
\usepackage{amsthm} 
\usepackage{latexsym} 
\usepackage{mathrsfs} 
\usepackage{mathtools} 
\usepackage{slashed}  

\usepackage{verbatim}
\usepackage{cases}
\usepackage[dvips]{epsfig}
\usepackage{epsf} 
\usepackage[bookmarksnumbered,pdfpagelabels=true,plainpages=false,colorlinks=true,
            linkcolor=black,citecolor=black,urlcolor=black]{hyperref}
\usepackage{url}
\usepackage{color}
\usepackage{xcolor}
\usepackage{graphicx}
\usepackage{enumitem} 

\usepackage{pifont} 
\usepackage{upgreek} 
\usepackage{fancyhdr} 
\usepackage{calligra} 
\usepackage{marvosym} 
\usepackage[percent]{overpic} 
\usepackage{pict2e} 
\usepackage[hypcap=false]{caption} 
\usepackage{wasysym} 
\usepackage{accents}
\usepackage[margin=3cm]{geometry}
\usepackage{lipsum}
\theoremstyle{plain}
\newtheorem{theorem}{Theorem}[section]

\newtheorem{lemma}[theorem]{Lemma}
\newtheorem{proposition}[theorem]{Proposition}
\newtheorem{corollary}[theorem]{Corollary}

\theoremstyle{definition}
\newtheorem{remark}[theorem]{Remark}

\numberwithin{equation}{section}
\allowdisplaybreaks

\newcommand{\tr}{\operatorname{tr}}

\newcommand{\cA}{\mathcal{A}}

\newcommand{\cD}{\mathcal{D}}
\newcommand{\cE}{\mathcal{E}}

\newcommand{\cG}{\mathcal{G}}

\newcommand{\cR}{\mathcal{R}}

\newcommand{\bC}{\mathbb{C}}
\newcommand{\bE}{\mathbb{E}}

\newcommand{\bH}{\mathbb{H}}
\newcommand{\bN}{\mathbb{N}}
\newcommand{\bM}{\mathbb{M}}

\newcommand{\bR}{\mathbb{R}}

\newcommand{\fF}{\mathfrak{F}}

\newcommand{\scA}{\mathscr{A}}

\newcommand{\sd}{\mathsf{d}}

\newcommand{\sg}{\mathsf{g}}

\newcommand{\sM}{\mathsf{M}}

\newcommand{\sT}{\mathsf{T}}
\newcommand{\sz}{\mathsf{z}}

\newcommand{\PH}{\mathbb{P}_H}

 \newcommand{\curl}{{\rm curl}}

\begin{document}
\title[Non-isothermal Magnetoviscoelastic fluids without Deformation Diffusion]{Global solutions for non-isothermal magnetoviscoelastic fluids in the absence of deformation diffusion}

\author{Yuanzhen Shao}
\address{The University of Alabama\\ 
	Tuscaloosa, Alabama \\
	USA}
\email{yshao8@ua.edu}

\author{Gieri Simonett}
\address{Department of Mathematics\\
        Vanderbilt University\\
        Nashville, Tennessee\\
        USA}
\email{gieri.simonett@vanderbilt.edu}

\thanks{}

\subjclass[2020]{Primary: 35Q35, 35Q74, 35K59, 35B40. Secondary: 76D03, 76A10.}



\keywords{}

\begin{abstract}
We study a non-isothermal model for incompressible magnetoviscoelastic fluids in $\mathbb{R}^N$, $N=2,3$, coupling the incompressible Navier-Stokes equations with the evolution equations for the deformation tensor, temperature, and magnetization. The material coefficients are allowed to depend on the temperature, and no artificial diffusion is imposed on the deformation tensor, leading to a coupled hyperbolic-parabolic system.
We establish local well-posedness of strong solutions for sufficiently regular initial data and prove global existence for small perturbations of a constant equilibrium. The global analysis relies on a suitable auxiliary variable combining the velocity and the inverse deformation tensor, which reveals a hidden dissipative structure and allows us to derive uniform higher-order energy estimates for the coupled system.
\end{abstract}

\maketitle


\section{Introduction}\label{S:Intro}

Magnetoviscoelastic fluids form a class of complex media exhibiting the simultaneous effects of viscosity, elasticity, and magnetization. They arise naturally in the modeling of magneto-sensitive polymers, ferromagnetic gels, and related materials whose mechanical response is influenced by an underlying magnetic microstructure. From the viewpoint of partial differential equations, the resulting models couple incompressible fluid dynamics, transport of elastic deformation, and nonlinear evolution of a constrained magnetization field. In a non-isothermal setting, one must in addition account for the evolution of the temperature and for the dependence of the constitutive coefficients on the thermal state. This leads to a strongly coupled hyperbolic-parabolic system with geometric constraints and nontrivial dissipative structure.

In this article we study the following Cauchy problem on $\bR^N$, $N=2,3$:
\begin{equation}
\label{magneto sys}
\left\{\begin{aligned}
\partial_t u + u \cdot \nabla u - \nabla \cdot (\nu(\theta)\nabla u) +\nabla \pi
&=-\nabla \cdot(\nabla m \odot \nabla m) + \nabla \cdot (F F^{\sT}),\\
\nabla \cdot u &=0 ,\\
\partial_t F + u \cdot\nabla F  &=(\nabla u)^{\sT} F ,\\
\partial_t\theta+u\cdot\nabla\theta -  \nabla \cdot (\kappa(\theta)\nabla \theta)
&= \nu(\theta)|\nabla u|^2 + \alpha(\theta)\big|\Delta m+|\nabla m|^2m\big|^2,\\
\partial_t m + u \cdot\nabla m
&= - \alpha(\theta)\, m\times (m \times \Delta m) - \beta(\theta)\, m \times \Delta m,\\
|m|  &=1   , \\[0.5mm]
 (u(0), F(0), \theta (0),  m(0))& =(u_0, F_0, \theta_0, m_0).
\end{aligned}\right.
\end{equation}
Here $u:(0,T)\times \bR^N\to \bR^N$ is the velocity field, $\pi:(0,T)\times \bR^N\to \bR$ is the pressure, $F:(0,T)\times \bR^N\to \bM^N:=\bR^{N\times N}$ is the deformation tensor, $\theta:(0,T)\times \bR^N\to \bR$ is the absolute temperature, and $m:(0,T)\times \bR^N\to \mathbb S^2$ is the magnetization field. The viscosity coefficient $\nu(\theta)$, the heat conductivity tensor $\kappa(\theta)$, and the magnetic parameters $\alpha(\theta)$ and $\beta(\theta)$ are assumed to depend on the temperature. The term $-\nabla\cdot(\nabla m\odot \nabla m)$ represents the magnetic stress induced by exchange energy, while $\nabla\cdot(FF^\top)$ is the elastic stress generated by the deformation. The heat equation contains the viscous dissipation $\nu(\theta)|\nabla u|^2$ and the magnetic dissipation $\alpha(\theta)|\Delta m+|\nabla m|^2m|^2$. The equation for $m$ is a convected Landau-Lifshitz-Gilbert system subject to the pointwise constraint $|m|=1$.

The heat flux is governed by the generalized Fourier law
\begin{equation}
\label{eqn:heatflux-intro}
q=-\kappa(\theta)\nabla \theta,
\end{equation}
where $\kappa(\theta)$ is positive definite and may be anisotropic. Thus \eqref{magneto sys} combines an incompressible Navier-Stokes system with variable viscosity, a transport-stretch equation for the deformation tensor, a nonlinear convective heat equation, and a constrained geometric evolution for the magnetization. Following \cite{DSS2302}, one can show that \eqref{magneto sys} satisfies the first and second laws of thermodynamics.

\medskip

The mathematical analysis of magnetoviscoelastic flows has its roots in several related theories. On the one hand, incompressible viscoelasticity has been intensively studied; see, for instance, Liu-Walkington \cite{LiuWal01}, Chen-Zhang \cite{ChenZhang06}, Lei-Liu-Zhou \cite{LeiLiuZhou}, and Lin-Zhang \cite{LinZhang08}. These works revealed the central role of the transport-stretch structure of the deformation tensor and the subtle cancellation mechanisms underlying global small-data theories. 
On the other hand, closely related hydrodynamic systems coupled with director variables have been extensively studied in the theory of nematic liquid crystals. A seminal contribution is due to Lin-Liu \cite{LinLiu95}, who introduced and analyzed a simplified Ericksen-Leslie system coupling the incompressible Navier-Stokes equations to a transported director-field equation. This model exhibits several structures that are also present in magnetoviscoelastic flows, notably the constraint $|d|=1$, the Ericksen stress $-\nabla\cdot(\nabla d\odot\nabla d)$, and the transported harmonic-map heat flow. Further developments concerning global weak solutions and partial regularity for the sphere-constrained system can be found, for example, in Lin-Lin-Wang \cite{LinLinWang10}, Hong-Xin \cite{HongXin12}, and Lin-Wang \cite{LinWang16}. Quasilinear approaches to strong well-posedness and long-time dynamics, including thermodynamically consistent non-isothermal Ericksen-Leslie models, were developed in \cite{HNPS14,HieberPruss17}.
In the context of hyperbolic--parabolic coupled systems, we  also mention the general framework developed by Kawashima \cite{KawashimaDissertation}.

The first mathematical model specifically designed for magnetoviscoelastic fluids in an isothermal environment was introduced in \cite{BFLS18,For16l}. In that setting the system reads
\begin{equation}
\label{magneto sys-isothermal-intro}
\left\{\begin{aligned}
\partial_t u + u \cdot \nabla u - \nu \Delta u  +\nabla \pi
&=-\nabla \cdot(\nabla m \odot \nabla m) + \nabla \cdot (F F^{\sT}),\\
\nabla \cdot u &=0 ,\\
\partial_t F - \mu \Delta F  + u \cdot\nabla F
&=(\nabla u)^{\sT} F ,\\
\partial_t m + u \cdot\nabla m
&= - \alpha\, m\times (m \times \Delta m) - \beta\, m \times \Delta m  ,\\
|m|  &=1   ,
\end{aligned}\right.
\end{equation}
where $\nu,\mu,\alpha>0$ and $\beta\ge 0$ are constants. From the modeling point of view, the deformation tensor should satisfy the transport-stretch equation \eqref{magneto sys}$_3$. However, because of the hyperbolic character of that equation and the lack of dissipation mechanism, the works \cite{BFLS18,For16l} introduced an artificial diffusion term $\mu\Delta F$.

For the regularized isothermal system \eqref{magneto sys-isothermal-intro}, Benesov\'a, Forster, Liu, and Schl\"omerkemper \cite{BFLS18} proved the existence of weak solutions in two space dimensions by means of a Galerkin approximation under a suitable smallness assumption on the initial data. Later, Kalousek, Kortum, and Schl\"omerkemper \cite{KKS21} extended this analysis to more general elastic energy densities and, moreover, established the local-in-time existence of strong solutions without the smallness condition together with a weak-strong uniqueness principle. In the two-dimensional periodic setting, De~Anna, Kortum, and Schl\"omerkemper \cite{KorSch22} constructed global Struwe-like solutions with  partial regularity by carrying out a delicate blow-up analysis near singular times.

In three space dimensions, the pointwise constraint $|m|=1$ creates an additional major analytical difficulty. To overcome this obstacle, several authors considered a Ginzburg-Landau approximation in which the sphere constraint is relaxed through a penalization term; see \cite{For16l,GKMS21,SchZab18}. More precisely, the magnetization equation is replaced by
\begin{equation}
\label{eqn:GLappro-intro}
\partial_t m + u \cdot \nabla m
=\Delta m-\frac{1}{\varepsilon^2}(|m|^2-1)m.
\end{equation}
Adapting the approach of Lin-Liu \cite{LinLiu95}, Forster \cite{For16l} combined a Galerkin scheme with a fixed point argument to prove the existence of weak solutions. Schl\"omerkemper and \v{Z}abensk\'y \cite{SchZab18} established weak-strong uniqueness under a Prodi-Serrin type condition. 
Zhao \cite{Zhao18} proved local well-posedness for magneto-viscoelastic flows in a periodic domain and derived several blow-up criteria. Subsequently, the same author \cite{Zhao20} established a weak-strong uniqueness principle for the system by means of a relative-energy method.
For initial data of higher regularity, Garcke, Knopf, Mitra, and Schl\"omerkemper \cite{GKMS21} proved strong well-posedness and stability by deriving a priori estimates that are uniform at the level of the approximating system.

For the constrained system \eqref{magneto sys-isothermal-intro}, the authors of the present paper established in \cite{DSS23} the local well-posedness, stability, and convergence of strong solutions in bounded three-dimensional domains. 
More recently, Liu \cite{Liu26} obtained local classical solutions and global well-posedness for small $L_2(\bR^N)$-initial data; Li and Liu \cite{LiLiu26} derived temporal decay rates for higher-order spatial derivatives of small solutions on $\bR^3$.

Another line of development concerns the physically more realistic situation in which the deformation tensor evolves according to the transport-stretch equation \eqref{magneto sys}$_3$ without artificial diffusion. The first result in this direction is due to Kalousek and Schl\"omerkemper \cite{KS21}, who introduced a notion of dissipative solutions for magnetoviscoelastic flows in bounded domains in dimensions $N=2,3$. Subsequently, Jiang, Liu, and Luo \cite{JiangLiuLuo23} established local existence and uniqueness of strong solutions, as well as global existence for small initial data on $\bR^N$, $N=2,3$. Their analysis is closely related to the classical small-data theory for incompressible viscoelastic fluids developed in \cite{LinLiuZhang05, LinZhang08}. More recently, Wang, Yang, and Yuan \cite{WangYangYuan25} proved blow-up criteria, global existence results, and weak-strong uniqueness for incompressible magneto-viscoelastic flows. We also mention the recent preprint \cite{HaoHuangJiangZhao}, which investigates an inviscid $\nu=0$ and non-dissipative $\alpha=0$ regime.

In the direction of modeling non-isothermal magnetoviscoelastic fluids, we introduced and analyzed in \cite{DSS2302} a thermodynamically consistent three-dimensional non-isothermal model. For the corresponding system with a regularized deformation equation, we established local existence and uniqueness of strong solutions and showed that solutions starting sufficiently close to a constant equilibrium exist globally and converge to a (possibly different) constant equilibrium. Moreover, we proved that every solution which is eventually bounded in the topology of the natural state space exists globally and converges to the set of equilibria. Our analysis is based on the $L_p$-maximal regularity theory for quasilinear parabolic systems; see \cite{PruSim16}.

The present paper continues this line of research and investigates the Cauchy problem \eqref{magneto sys} on $\bR^N$, $N=2,3$, in the physically natural setting where the deformation tensor satisfies the transport-stretch equation without artificial diffusion. Our main objective is to establish local existence and uniqueness of strong solutions, as well as global existence for small initial data. In contrast to the isothermal case, the temperature equation is fully coupled with both the momentum and magnetization equations through the temperature-dependent coefficients $\nu(\theta)$, $\kappa(\theta)$, $\alpha(\theta)$, and $\beta(\theta)$, as well as through the dissipative source terms on the right-hand side of \eqref{magneto sys}$_4$.
 
To prove local well-posedness, we employ a linear iteration scheme. Particular care is required in the treatment of the temperature and magnetization equations due to the presence of initial data that are not $L_2$-integrable. Our approach can be viewed as a generalization of the framework introduced in \cite{KawashimaDissertation} for hyperbolic-parabolic composite systems. Combined with the techniques developed in \cite{DisconziShao23}, the method in this manuscript can also be extended to treat non-symmetric hyperbolic-parabolic composite systems.

The principal analytical difficulty in establishing global existence arises from the temperature-dependent viscosity. In the isothermal analysis of \cite{JiangLiuLuo23}, following the strategy of Lin-Zhang \cite{LinLiuZhang05, LinZhang08}, one exploits a curl-free structure to introduce the auxiliary quantity $G=F^{-1}-I_N$, which satisfies $G^{\sT}=\nabla\psi$ for some potential $\psi$. This transforms the first three equations in \eqref{magneto sys} into
\begin{equation}
\label{psi-eq}
\left\{
\begin{aligned}
\partial_t u + u \cdot \nabla u - \nu \Delta u + \Delta \psi + \nabla \pi + \nabla \cdot G
&= \nabla \cdot g(G) - \nabla \cdot (\nabla m \odot \nabla m),\\
\nabla \cdot u &= 0,\\
\partial_t \psi + u + u \cdot \nabla \psi &= 0,
\end{aligned}
\right.
\end{equation}
where $g(G)$ collects higher-order terms in the expansion of $F F^{\sT}$; see \eqref{g-G}. 

Introducing the new variable $w=\nu u - \psi$, one obtains a generalized Stokes system of the form
\begin{equation*}
\left\{
\begin{aligned}
-\Delta w + \nabla \pi + \nabla \cdot G
&= -\partial_t u - u \cdot \nabla u + \nabla \cdot g(G) - \nabla \cdot (\nabla m \odot \nabla m),\\
\nabla \cdot w &= -\nabla \cdot \psi.
\end{aligned}
\right.
\end{equation*}
Applying $\nabla \partial^\sigma$ to \eqref{psi-eq}$_3$ and testing by $\nabla \partial^\sigma \psi$ for $|\sigma|\le s$ yields
\begin{equation}
\label{est psi}
\frac{1}{2} \frac{d}{dt} \| \nabla \partial^\sigma \psi \|_{L_2}^2 
+ \frac{1}{\nu} \|\nabla \partial^\sigma \psi\|_{L_2}^2 
= -\frac{1}{\nu} \left( \nabla \partial^\sigma w \mid \nabla \partial^\sigma \psi \right) 
- \left( \nabla \partial^\sigma (u \cdot \nabla \psi) \mid \nabla \partial^\sigma \psi \right).
\end{equation}
Combined with elliptic estimates for the generalized Stokes system, this identity allows one to derive global bounds at the level of $\nabla \psi$, rather than $\psi$ itself. This feature is crucial in the viscoelastic argument, since it produces a favorable term involving $\|\nabla \partial_t u\|_{H^{s-2}}$, which can be absorbed by the dissipation in the $H^{s-2}$-estimate of $\partial_t u$.

In the non-isothermal case, however, the dependence of the viscosity on the temperature introduces additional commutator terms involving $\nabla \theta$ in \eqref{est psi}, which destroy the cancellation structure available in the constant-viscosity setting. As a consequence, one can no longer avoid estimates of $\|\psi\|_{H^{s+1}}$, and the approach of \cite{LinLiuZhang05, LinZhang08}, as implemented in \cite{JiangLiuLuo23}, ceases to be applicable. For the same reason, the method of Lei-Liu-Zhou \cite{LeiLiuZhou} cannot be directly extended to the present setting.

To overcome this difficulty, we adapt a strategy developed for incompressible viscoelastic fluids in \cite{ChenZhang06}, where the lack of dissipation in the deformation equation is compensated by introducing the auxiliary variable $M=\nu(\theta)\nabla u - G^{\sT}$. We show that a similar approach can be successfully applied to \eqref{magneto sys}, despite the additional complications caused by the temperature-dependent coefficients.

The remainder of this paper is organized as follows. 
In Section~\ref{Section:main thm}, we state the main results of the manuscript. 
Section~\ref{Section:equi form} is devoted to deriving an equivalent formulation of \eqref{magneto sys}, which is more suitable for the local wellposedness analysis, and to introducing the functional framework and several auxiliary tools used throughout the paper. 
In Section~\ref{Section:Local wellposed}, we establish the local existence and uniqueness of strong solutions to \eqref{magneto sys}. 
Finally, in Section~\ref{Section:global existence}, we prove the global existence of solutions under a smallness assumption on the initial data.

\medskip
\noindent
{\bf Notation:}
For the readers' convenience, we list here some notation and conventions used throughout the manuscript.

In the following, all vectors $a=(a_j)_{j=1}^N\in \bR^N$ are viewed as column vectors. 
For two vectors $a,b\in\bR^N$, the Euclidean inner product is denoted by $a\cdot b$.
Given two matrices $A,B\in \bM^N$, the Frobenius matrix inner product $A:B$ is given by
$$
A:B={\rm Tr} (AB^{\sT}),
$$
where ${}^{\sT}$ is the transpose.
If $u\in C^1( \bR^N;\bR^N)$, we set $\nabla u(x)= e_j \otimes \partial_j u(x)$ for $x\in  \bR^N$.
Here and in the sequel, we use the summation convention, indicating that terms with repeated indices are added.
Hence, for $u=(u_j)_{j=1}^N\in C^1( \bR^N; \bR^N)$, we have 
$$[\nabla u(x)]_{ij}= \partial_i u_j(x), \;\; 1\le i, j\le N, \;\; x\in  \bR^N.$$
We note that $[\nabla u(x)]^{\sT}$ corresponds to the Fr\'echet derivative of $u$ at $x\in \bR^N$.

If $A\in C^1(\bR^N;\bM^N)$, its divergence $\nabla \cdot A$ is the vector function defined by
\begin{equation}
\label{divergence-matrix}
(\nabla \cdot A)(x)=(\partial_j A(x))^{\sT}e_j, \;\; x\in  \bR^N
\end{equation}
Hence, if $A=[a_{ij}]\in C^1( \bR^N;\bM^N)$, its divergence is given by
 $$[(\nabla\cdot A)(x)]_i=\partial_j a_{ji}(x), \;\; 1\le i,j\le N\;\; x\in  \bR^N. $$
Here and in the sequel, we use the summation convention, indicating that terms with repeated indices are added.
We note   that 
 \eqref{divergence-matrix} implies
\begin{equation}
\label{divergence-property}
(\nabla\cdot A)\cdot u = \nabla\cdot (Au)- A:\nabla u, \quad A\in C^1( \bR^N;\bM^N), \ u\in C^1( \bR^N;\bR^N).
\end{equation}
For a matrix $A\in C^1( \bR^N;\bM^N)$, we set 
$|\nabla A|^2=\partial_j A: \partial_jA.$

\smallskip
\noindent
For $N=3$ and $A=[A_1,A_2,A_3]\in C^1(\bR^3;\bM^3)$,
where $A_j$ denotes the $j$-th column of $A$, we define the curl
of $A$ columnwise by
\[
\curl A
=
[\curl A_1,\curl A_2,\curl A_3],\qquad
\text{or equivalently,}
\quad
[\curl A]_{ij}
=
\varepsilon_{ik\ell}\partial_k A_{\ell j},
\qquad 1\leq i,j\leq 3,
\]
where $\varepsilon_{ik\ell}$ is the Levi--Civita symbol.
Accordingly, $\curl\curl A$ is also understood columnwise.
For $N=2$, we use the corresponding two-dimensional convention.
With these definitions,
\begin{equation}\label{curl-curl-matrix}
\nabla(\nabla\cdot A)
=
\Delta A+\curl\curl A.
\end{equation}
\smallskip
For any $T\in (0,\infty)$, let 
\[
J_T=[0,T].
\]
$K :\bR_+\to \bR_+$  ($C  $, resp.) denotes  a continuous increasing function (a  positive constant, resp.), which has at most polynomial growth.

\smallskip
Given a domain $E\subset \bR^N$ or $E\subset \bR^{N+1}$, $BC^k (E)$ denotes the space of all functions on $E$ whose derivatives up to $k$-th order are continuous and bounded. Let $BC^\infty(E)=\bigcap\limits_{k= 0}^\infty BC^k(E)$. 

\smallskip
We use $H^s$  to denote the $L_2(\bR^N)$-based Sobolev spaces with $H^0=L_2 (\bR^N )$.

Let $\PH: L_2 ( \bR^N; \bR^N) \to L_{2,\sigma}  ( \bR^N; \bR^N)$   be the Helmholtz projection, where 
$$
L_{2,\sigma}  ( \bR^N; \bR^N) :=\PH (L_2 ( \bR^N; \bR^N)).
$$
For any function space $\fF(\sM)$ with $\sM\in \{  \bR^N , J_T\times  \bR^N , (0,T)\times \bR^N\}$, $\fF_\sigma(\sM; \bR^N)= \fF(\sM; \bR^N)\cap L_{2,\sigma}( \bR^N;\bR^N)$.
Note that $H^s_\sigma $ is a closed subspace of $H^s$.

\smallskip
Given $\sigma=(\sigma_i)_{i=1}^N,\tau=(\tau_i)_{i=1}^N\in \bR^N$,
$\tau \leq \sigma $ means that $\tau_i \leq \sigma_i$. The relation $\tau<\sigma$ is defined similarly with the additional condition $|\tau| < |\sigma|$.

\smallskip
Given a matrix $M \in \bM^N$, $M^*= \overline{M}^{\sT}$ denotes the adjoint of $M$.  
$(\cdot | \cdot)$ denotes the inner product in   $ L_2 (\bR^N )$.    

\smallskip
The admissible class $\scA (\bR^n; \bR^m)$ for the parameters $\alpha,\beta,\nu,\kappa$ is defined as the collection of all functions   $ f\in C^\infty (\bR^n; \bR^m)$ such that for each multi-index $\gamma \in \bN^n$, there exists a continuous increasing function $K_\gamma: \bR_+ \to \bR$ such that
$$
| \partial^\gamma_\xi  f (\xi) | \leq K_\gamma (  |\xi |  ) , \quad \xi \in \bR^n.
$$
Here $\bN$ denotes the set of all natural numbers including $0$.


\medskip
\noindent
{\bf Assumptions:}
The following will be assumed  throughout this article.
\begin{itemize}
\item  $s\in \bN$ and $s> \frac{N}{2}  $; 
\item the parameters $\alpha,\beta,\nu  \in \scA(\bR; \bR)$, $\kappa \in \scA(\bR ;{\rm sym}\,(\bM^N))$ and 
there exist  positive constants  $\underline{\alpha}$,  $\underline{\nu}$, $\underline{\kappa}$ and $\overline{\alpha}$, $\overline{\beta}$,  $\overline{\nu}$, $\overline{\kappa}$ such that
$$
\underline{\alpha} \leq \alpha \leq \overline{\alpha}, \quad 0\leq \beta   \leq \overline{\beta}, \quad \underline{\nu} \leq \nu \leq \overline{\nu} , \quad   \underline{\kappa} I_N \leq \kappa \leq \overline{\kappa} I_N .
$$  
\end{itemize}
Note that the first assumption implies $H^s ( \bR^N) \hookrightarrow C( \bR^N)$ and thus $H^s( \bR^N)$ is a Banach algebra.
 
\section{Main Theorem}\label{Section:main thm}

\begin{theorem}
\label{Thm: Local wellposedness}
Let $s\geq 2$ be an integer.
Assume $(u_0,F_0-I_N, \nabla \theta_0, \nabla m_0)\in H^s_\sigma \times H^s \times H^{s-1} \times H^s$ satisfies 
$|m_0|\equiv 1$,  $\theta_0 \in L_\infty(  \bR^N)$ and  $\theta_0\geq c $ for some   $c\in \bR_+$. Then there exists a unique solution 
\begin{align*}
u & \in L_2((0,T); H^{s+1}_\sigma) \cap H^1((0,T); H^{s-1}_\sigma)  \\
F -I_N & \in   C(J_T,H^s) \cap C^1(J_T; H^{s-1}) \\
 \theta  & \in L_\infty((0,T)\times \bR^N)    \\
\nabla \theta & \in L_2((0,T); H^{s}) \cap H^1((0,T); H^{s-2}) \quad \text{and} \quad \partial_t \theta \in L_2((0,T); H^{s-1})\\
\nabla m & \in L_2((0,T); H^{s+1}) \cap H^1((0,T); H^{s-1})  \quad \text{and} \quad \partial_t m \in L_2((0,T); H^{s})
\end{align*}
to \eqref{magneto sys} for some $T>0$ that only depends on  
$$
\|(u_0, F_0 - I_N,  \nabla m_0)\|_{H^s} + \| \nabla \theta_0\|_{H^{s-1}} + \| \theta_0\|_\infty.
$$ 
Moreover, it holds that
$$
\theta(t,x) \geq   \inf_{y\in \bR^N} \theta_0(y), \quad (t,x)\in [0,T]\times   \bR^N.
$$
Each solution can be extended to a maximal interval of existence $[0,T_*)$.
\end{theorem}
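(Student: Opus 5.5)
The plan is a linear iteration (Picard-type) scheme in the spirit of Kawashima's framework for hyperbolic--parabolic composite systems, after first rewriting \eqref{magneto sys} in a form better suited to energy estimates. The first step is a reformulation. Using $|m|=1$, we write the Landau--Lifshitz--Gilbert equation in the quasilinear parabolic form
\[
\partial_t m + u\cdot\nabla m - \alpha(\theta)\Delta m = \alpha(\theta)|\nabla m|^2 m - \beta(\theta)\, m\times\Delta m .
\]
The principal symbol $\alpha I + \beta\, m\times$ has symmetric part $\alpha I$, so it is strongly parabolic. Since $\theta_0$ and $m_0$ are not in $L_2$, we shift to the unknowns $\vartheta=\theta-\bar\theta_0$ and $m-m_0$, or else work directly with $\nabla\theta$ and $\nabla m$. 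The temperature equation is written in nondivergence form $\partial_t\theta+u\cdot\nabla\theta-\kappa(\theta):\nabla^2\theta-\kappa'(\theta)\nabla\theta\cdot\nabla\theta=\dots$. The velocity equation is projected with $\PH$. Finally, $F-I_N$ satisfies a symmetric hyperbolic transport equation.

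Given an iterate $(u^k,F^k,\theta^k,m^k)$ in the solution class on $J_T$, we define the next iterate by solving four linear problems. The first is a linear Stokes system with variable viscosity $\nu(\theta^k)$. The second is the transport equation $\partial_tF+u^k\cdot\nabla F=(\nabla u^k)^{\sT}F$. The third is a linear heat equation with coefficients $\kappa(\theta^k)$. The fourth is a linear parabolic system for $m$ with coefficients frozen at $(\theta^k,m^k)$. Each linear problem is solvable in the stated regularity: the parabolic ones by Galerkin approximation or standard $L_2$ maximal regularity on $\bR^N$ with $H^s$-valued coefficients, and the transport equation by the method of characteristics together with $H^s$ energy estimates. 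Before iterating, we mollify the initial data so that everything is smooth.

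The core of the argument is uniform bounds. We show that there are $R$ and $T$, depending only on the quantity in the statement, such that the map sends the ball
\[
\|u\|_{L_\infty H^s\cap L_2H^{s+1}}+\|F-I\|_{L_\infty H^s}+\|\nabla\theta\|_{L_\infty H^{s-1}\cap L_2H^s}+\|\nabla m\|_{L_\infty H^s\cap L_2H^{s+1}}+\|\theta\|_\infty\le R
\]
into itself. To get these bounds we apply $\partial^\sigma$ with $|\sigma|\le s$ and use Kato--Ponce commutator estimates, noting that $H^s$ is an algebra since $s>N/2$. The elastic coupling $\nabla\cdot(FF^{\sT})$ must be handled at top order: pairing it with $\partial^\sigma u$ loses a derivative, but after integration by parts it is controlled by $\|F\|_{H^s}\|\nabla u\|_{H^s}$, which is absorbed by the viscous dissipation. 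The source terms $\nu|\nabla u|^2$ and $\alpha|\Delta m+|\nabla m|^2m|^2$ appear in the $\nabla\theta$ estimate at level $H^{s-1}$. They are bounded by $\|\nabla u\|_{H^s}\|\nabla u\|_{H^{s-1}}$ and analogous quantities, and are therefore time-integrable.

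The $L_\infty$ bound and the lower bound on $\theta$ follow from the maximum principle for the heat equation, because the right-hand side is nonnegative. This is also how we keep $\nu,\kappa,\alpha$ within the admissible bounds, so that every constant depends only on $\|\theta\|_\infty$. Contraction is then proved in a weaker norm, one derivative lower (the $L_2$-level differences of $u$, $F$, $\nabla\theta$, $\nabla m$). Together with weak compactness, this gives convergence to a solution in the stated class; continuity in time for $F$ follows from standard transport arguments. Uniqueness uses the same difference estimate. Preservation of $|m|=1$ is checked by deriving a parabolic equation for $|m|^2-1$ with zero data. Extension to a maximal interval of existence is standard, since $T$ depends only on the norm of the data.

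The main obstacle I expect is closing the estimates with the low regularity of $\theta$. Only $\nabla\theta\in H^{s-1}$ is available initially, yet $\theta$ enters the top-order terms through $\nu(\theta)\nabla u$ and the commutators $[\partial^\sigma,\nu(\theta)]\nabla u$. These require $\nabla\theta\in L_\infty$ or $L_2 H^s$ in time rather than $L_\infty H^s$. I would handle this by exploiting the parabolic gain $\nabla\theta\in L_2H^s$ and estimating such commutators by $\|\nabla\theta\|_{H^{s}}\|\nabla u\|_{H^{s-1}}$ in $L_2$ in time. Smallness of $T$ then supplies a factor $T^{1/2}$.
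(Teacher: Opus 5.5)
Your architecture matches the paper's: frozen-coefficient linear iteration, a ball preserved for small $T$, contraction at a lower regularity level, uniqueness from the difference estimate, and continuation. Using the maximum principle on the $\theta$-iterates, with $u^k\cdot\nabla\theta$ kept on the left, is a legitimate substitute for the paper's control of $\|\theta-\theta_0\|_{L_2}$. The problem is the step you single out as the crux. In the $H^s$ energy identity for the divergence-form operator, the commutator $[\partial^\sigma,\nu(\vartheta)]\nabla u$ with $|\sigma|=s$ is paired with $\partial^\sigma\nabla u$. That is a dissipation quantity, only square-integrable in time. Your bound therefore gives $\int_0^T\|\nabla\vartheta\|_{H^s}\|\nabla u\|_{H^{s-1}}\|\nabla u\|_{H^s}\,dt\lesssim R^3$ with no positive power of $T$. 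Equivalently, after Young's inequality you get a Gronwall factor $\exp\bigl(C\int_0^T\|\nabla\vartheta\|_{H^s}^2\,dt\bigr)$, which is bounded only by $e^{CR^2}$ however small $T$ is. For the $\theta$-equation, where $\vartheta=\theta^k$ lies in the same ball, the self-map condition becomes $C_0e^{CR^2}\|\nabla\theta_0\|_{H^{s-1}}^2\le R^2$. This has no solution for large data, so the claimed $T^{1/2}$ does not materialize.

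What you need is for $\|\nabla\vartheta\|_{H^s}$ and the dissipation norms to enter with total time-exponent strictly below $2$.

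\emph{The $u$- and $\theta$-equations.} Here $L_2H^s$ regularity of $\nabla\vartheta$ is not needed for the uniform bounds. At most $s$ derivatives fall on $\nu(\vartheta)$ or $\kappa(\vartheta)$, which costs only $\|\nabla\vartheta\|_{H^{s-1}}$, bounded uniformly in time. The remaining factor, e.g. $\|\nabla u\|_\infty$, is interpolated as in \eqref{W1 est}. Young's inequality then leaves a Gronwall exponent of the form $TK$.

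\emph{The $m$-equation.} This is where top-order derivatives of $\vartheta$ are unavoidable, and you do not discuss it. Since $\nabla m\in H^s$ sits one level above $\nabla\theta\in H^{s-1}$, the term $\partial^\sigma\cA(\vartheta,h)\,\Delta m$ with $|\sigma|=s+1$ appears. The paper uses Gagliardo--Nirenberg so that $\|\nabla\vartheta\|_{H^s}$ enters only with power $\frac{8}{8-N}<2$. H\"older in time then gives the small factor $T^{\frac{4-N}{8-N}}M_3^{\frac{8}{8-N}}$.

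\emph{The heat sources.} The same kind of interpolation, producing $T^{1-\gamma}M_3^{2\gamma}$, is required for $\nu|\nabla u|^2$ and $\alpha|\Delta m+|\nabla m|^2m|^2$. You need it both for $\|g_\theta\|_{L_2H^{s-1}}$ and for your maximum-principle upper bound $\|\theta_0\|_\infty+\int_0^T\|g_\theta\|_\infty\,dt$. Nonnegativity of the source gives only the lower bound, and time-integrability without a factor $T^{\epsilon}$ cannot close a single-radius ball.

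\emph{Control of $m^k$.} The linear scheme does not preserve $|m^k|=1$, and there is no maximum principle for the $m$-system. Your ball must therefore also control $\|m^k-m_0\|_{L_2}$ (the paper's $M_2$) in order to bound $\|m^k\|_\infty$.
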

\medskip

\begin{theorem}
\label{Thm: global wellposedness}
Let $s\geq 2$ be an integer.
Assume $(u_0,F_0 - I_N, \theta_0 - \theta_*, \nabla m_0)\in H^s_\sigma \times H^s \times H^s \times H^s$ for some $\theta_*\in \bR_+$ satisfies $|m_0|\equiv 1$,   $\theta_0\geq c$ for some  $c\in \bR_+$, $\nabla \cdot F_0=0$,  and $\det F_0=1$. Then  there exists $\varepsilon>0$ such that whenever,
$$
\|u_0\|_{H^s} + \|F_0 -I_N\|_{H^s} + \|\theta_0 - \theta_*\|_{H^s} + \| \nabla m_0  \|_{H^s}\leq\varepsilon,
$$
the solution to \eqref{magneto sys} exists globally, i.e. $T_*=+\infty$.
\end{theorem}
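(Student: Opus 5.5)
The plan is a continuation argument built on Theorem~\ref{Thm: Local wellposedness}. First I record the constraints propagated by the transport-stretch equation \eqref{magneto sys}$_3$. If $\nabla\cdot F_0=0$ and $\det F_0=1$, then $\nabla\cdot F^{\sT}$-type divergence identities, $\det F\equiv 1$, and the curl-free structure $\partial_k G_{ij}=\partial_j G_{ik}$ hold for $G=F^{-1}-I_N$ for all $t<T_*$. This gives $G^{\sT}=\nabla\psi$, and it lets me rewrite $\nabla\cdot(FF^{\sT})=\nabla\cdot G^{\sT}$-type linear terms plus $\nabla\cdot g(G)$ modulo gradients. In particular, $\nabla\cdot G$ and $\curl G$ are quadratic in $G$, so $\|\nabla G\|_{H^{s-1}}$ is controlled by $\|\nabla\cdot G\|_{H^{s-1}}$ plus quadratic terms, via \eqref{curl-curl-matrix}.

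Next I define the energy
\[
\cE(t)=\|u\|_{H^s}^2+\|G\|_{H^s}^2+\|\theta-\theta_*\|_{H^s}^2+\|\nabla m\|_{H^s}^2
\]
and a dissipation
\[
\cD(t)=\|\nabla u\|_{H^s}^2+\|\nabla\theta\|_{H^s}^2+\|\nabla m\|_{H^{s+1}}^2+\|\nabla G\|_{H^{s-1}}^2 .
\]
The goal is the a priori inequality
\[
\frac{d}{dt}\widetilde\cE+c\,\cD\le K(\cE)\,\cE^{1/2}\cD
\]
for an equivalent energy $\widetilde\cE\simeq\cE$. A standard bootstrap then gives $\cE(t)\le C\varepsilon^2$ for all $t<T_*$, and the local theorem with the blow-up alternative yields $T_*=\infty$. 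Note that $\theta\ge c$ and $\|\theta-\theta_*\|_\infty$ small keep $\nu,\kappa,\alpha,\beta$ in their uniform ranges and $K$ bounded.

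The basic estimates are obtained as follows.
\begin{itemize}
\item Apply $\partial^\gamma$, $|\gamma|\le s$, to the $u$- and $F$-equations and test with $\partial^\gamma u$ and $\partial^\gamma(F-I_N)$. The linear coupling terms cancel, as in the classical viscoelastic energy identity.
\item For $m$, test the $\partial^\gamma\nabla$-differentiated Landau--Lifshitz--Gilbert equation. Using $|m|=1$, the principal part becomes $\alpha(\theta)\Delta m$ plus quadratic terms, so it yields $\|\nabla m\|_{H^{s+1}}^2$. The magnetic stress then cancels against $(u\cdot\nabla m\mid\Delta m)$ at top order, up to terms bounded by $K\cE^{1/2}\cD$.
\item The temperature equation is handled directly. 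Its sources $\nu|\nabla u|^2$ and $\alpha|\Delta m+|\nabla m|^2m|^2$ are quadratic, hence bounded by $\cE^{1/2}\cD$ in $H^{s-1}$ paired against $H^{s+1}$-level $\theta$ derivatives. Commutators from $\nu(\theta),\kappa(\theta)$ are estimated by Moser/Kato--Ponce calculus.
\end{itemize}

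The missing dissipation of $G$ is obtained following \cite{ChenZhang06} via $M=\nu(\theta)\nabla u-G^{\sT}$. Rewriting the momentum equation, $\partial_t u+\nabla\pi-\nabla\cdot M$ is quadratic (up to the gradient term $\nabla\psi$ being absorbed in the pressure). Taking the curl/divergence structure and using the transport equation for $G$, I get $\partial_t G^{\sT}+\ldots=\nabla u+\text{quadratic}$. Then $M$ obeys
\[
\partial_t G^{\sT}+\nu(\theta)^{-1}G^{\sT}=-\nu(\theta)^{-1}M+\text{quadratic},
\]
which is a damped ODE for $G$ forced by $M$. Testing $\partial^\gamma$ of this with $\partial^\gamma G$ for $|\gamma|\le s-1$ on $\nabla G$ gives $\frac{d}{dt}\|\nabla G\|_{H^{s-1}}^2+c\|\nabla G\|_{H^{s-1}}^2\lesssim\|\nabla M\|_{H^{s-1}}^2+\dots$. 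Here $\|\nabla M\|_{H^{s-1}}\lesssim\|\nabla u\|_{H^s}+\|G\|_{H^s}$ would be circular, so I instead use a cross term $\delta(\partial^\gamma u\mid-\partial^\gamma\nabla\cdot G)$ with small $\delta$. This produces $\delta\|\nabla\cdot G\|_{H^{s-1}}^2$ minus $\delta C\|\nabla u\|_{H^s}^2$ and $\delta(\partial_t u,\cdot)$ terms, which I replace by using the equation. The $\nu(\theta)$ variation generates terms $\nu'(\theta)\nabla\theta\otimes\nabla u$, which are quadratic.

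The main obstacle is the top-order regularity of $G$. The commutator $[\partial^\gamma,\nu(\theta)]\nabla u$ at $|\gamma|=s$ involves $\partial^s\theta\,\nabla u$, and one must check that all such terms close with $\cD$ containing only $\|\nabla G\|_{H^{s-1}}$, not $\|G\|_{H^{s+1}}$. I expect this to work because the top-order $G$-energy comes from the symmetric cancellation in the basic estimate, while the $M$-argument is used only at order $s-1$. Nonlinearities such as $u\cdot\nabla G$ at top order are treated by integration by parts using $\nabla\cdot u=0$. Finally, I conclude with the bootstrap described above.
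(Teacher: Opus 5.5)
Your route is genuinely different from the paper's and can be completed, but as written the decisive step does not work. With divergence and curl taken columnwise, as in the paper, \eqref{G-relation} gives $\nabla\cdot G=\nabla\tr G$. This is indeed the divergence that is quadratic, because $\det(I_N+G)=1$. But being a gradient it is $L_2$-orthogonal to the solenoidal field $u$. So your functional $\delta(\partial^\gamma u\mid-\partial^\gamma\nabla\cdot G)$ vanishes identically and produces no dissipation.

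Your structural claims are also inconsistent. If $\nabla\cdot G$ and $\curl G$ were both quadratic, \eqref{curl-curl-matrix} would make $\Delta G$ quadratic, which would force $\nabla G=0$ for small $G$. What is true is that $\curl G^{\sT}=0$ exactly. Hence $\nabla(\nabla\cdot G^{\sT})=\Delta G^{\sT}$ by \eqref{Laplacian of G}, and $\|\nabla G\|_{H^{s-1}}$ is equivalent to $\|\nabla\cdot G^{\sT}\|_{H^{s-1}}$ with no error term. Moreover, $\nabla\cdot G^{\sT}$ is the non-degenerate elastic coupling in \eqref{eq-u-new}.

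So the term to add is $\delta\sum_{|\gamma|\le s-1}(\partial^\gamma u\mid\partial^\gamma\nabla\cdot G^{\sT})$. Using $\partial_t G^{\sT}=-\nabla u+\dots$ and the momentum equation, its time derivative is
\[
-\|\partial^\gamma\nabla\cdot G^{\sT}\|_{L_2}^2+\|\partial^\gamma\nabla u\|_{L_2}^2+\big(\partial^\gamma\nabla\cdot(\nu(\theta)\nabla u)\mid\partial^\gamma\nabla\cdot G^{\sT}\big)-\big(\partial^\gamma\nabla(\pi+\tr G)\mid\partial^\gamma\nabla\tr G\big),
\]
up to terms bounded by $K\cE^{1/2}\cD$ (after one integration by parts in the transport terms). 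The pressure and $\nabla\cdot G$ enter only through the last term, because $\nabla\cdot(\nabla\cdot G^{\sT})=\Delta\tr G$. That term is of size $K\cE^{1/2}\cD$, since $\|\nabla\tr G\|_{H^{s-1}}\lesssim(1+\|G\|_{H^s})\|G\|_{H^s}\|\nabla G\|_{H^{s-1}}$ and, by \eqref{Lap of pressure}, $\|\nabla\pi\|_{H^{s-1}}\le K(\cE)\cD^{1/2}$. The viscous term is bounded by $K(1+\cE^{1/2})\|\nabla u\|_{H^s}\|\nabla G\|_{H^{s-1}}$ and is absorbed for small $\delta$.

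There is a second, smaller error. $\cD$ cannot contain $\|\nabla m\|_{H^{s+1}}^2$, since $\|\nabla m\|_{L_2}$ is not dissipated on $\bR^N$. Use $\|\nabla^2 m\|_{H^s}^2$, as the paper does. Then treat low-order terms such as $\int\alpha(\theta)|\nabla m|^4$ and $(u\cdot\nabla m\mid\Delta m)$ by Gagliardo--Nirenberg or by the cancellation in \eqref{Conserved quantity}.

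For comparison, the paper never dissipates $\nabla G$ itself. It uses the fact that $M=\nu(\theta)\nabla u-G^{\sT}$ satisfies its own parabolic equation \eqref{eq-M}. This is what breaks the circularity you ran into with the damped ODE for $G$. The paper puts $\|M\|_{H^{s-1}}$ into $\cE$ and $\|\nabla^2M\|_{H^{s-2}}$ into $\cD$, and recovers only $\|\nabla^2G\|_{H^{s-2}}$ from $\nabla^2G^{\sT}=\nabla^2(\nu(\theta)\nabla u)-\nabla^2M$. The price is lengthy estimates of the terms in \eqref{eq-M}, notably $\nu(\theta)\nabla^2\pi$, $\curl\curl(\nabla\nu(\theta)\otimes u)$, $\nu'(\theta)\nabla u\,\nabla\cdot(\kappa(\theta)\nabla\theta)$ and $\nabla^2\tr G$. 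Also, since $\|\nabla G\|_{L_2}$ is not dissipated, the $L_2$ level must rest on the exact cancellation in \eqref{Conserved quantity}.

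Your corrected cross term is a Kawashima-type hypocoercivity argument. It avoids the $M$-equation altogether and yields the stronger dissipation $\|\nabla G\|_{H^{s-1}}^2$. The temperature dependence of $\nu$ enters only through product bounds such as $\|\nabla\cdot(\nu(\theta)\nabla u)\|_{H^{s-1}}\le K(\|\theta\|_\infty)(1+\|\theta-\theta_*\|_{H^s})\|\nabla u\|_{H^s}$, never through a cancellation. So the difficulty described in the introduction for the $w=\nu u-\psi$ formulation does not arise. Consequently your $M$-detour and your paragraph on the main obstacle can be dropped: the top-order commutator $[\partial^\gamma,\nu(\theta)]\nabla u$ is bounded by $K\cE^{1/2}\cD$ exactly as the term $G.6$ in the paper.
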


\begin{remark}\label{Remark:main_thm}
\begin{itemize}
\item[]
\item[(i)]  In Theorem~\ref{Thm: global wellposedness}, the initial data satisfy the conditions in Theorem~\ref{Thm: Local wellposedness}. Consequently, the temperature satisfies $\theta (t) -\theta_* \in H^s$ for all $t\in [0,T_*)$.
\item[(ii)] The above theorems still hold true if the coefficients $\alpha,\beta, \kappa,\nu$ depend on all arguments $z=(u,F,\theta,m)$. 
We only consider the case of temperature-dependent coefficients in this article for analytic simplicity.
\item[(iii)]  In virtue of the identity
\[
\partial_t (\det F) + u \cdot \nabla (\det F) = (\nabla \cdot u) \det F,
\]
when $\det F_0=1$, the condition $\det F =1$   is equivalent to the incompressibility condition $\nabla\cdot u=0$.
See also \cite{For13}.
\item[(iv)] 
Consider a flow map from a reference configuration $\Omega_0^X$ to a deformed configuration $\Omega_t^x$, where $X$ is the Lagrangian coordinates and $x$ is the Eulerian coordinates. The deformation gradient is defined by
\begin{equation}
\label{deformation tensor}
\tilde{F}(X,t)= (\nabla_X x(X,t))^{\sT}.
\end{equation}
See \cite[Section~2.1]{For16l}.
Note that our convention of taking gradients differs from that in \cite{For16l}.
Then, under the fluid incompressibility condition $\nabla\cdot u=0$, $F$ satisfies
$$
\partial_t ( \nabla \cdot F) + ( u \cdot \nabla ) (\nabla \cdot F ) =0 .
$$
See \cite[Equation~(1.7)]{LinLiuZhang05}.
Under the condition $\nabla\cdot F_0=0$, we find that $\nabla\cdot F=0$ whenever the solution exists.
\end{itemize}
\end{remark}

\section{Equivalent formulation and Preliminary inequalities}\label{Section:equi form}


We will first express
$$ \alpha m\times (m \times \Delta m) + \beta m \times \Delta m$$
in a form that is more convenient for our analysis. 
By the well-known identity $a\times (b\times c)= (a\cdot c)b -(a\cdot b)c$, we have
\begin{equation*}
m\times (m\times \Delta m)= (m \cdot \Delta m) m - |m|^2 \Delta m .
\end{equation*}
By using the facts that $|m|=1$ and
\begin{equation*}
0= \Delta |m|^2 =2|\nabla m|^2 + 2m \cdot \Delta m,
\end{equation*}
we obtain
\begin{equation*}
m\times (m \times \Delta m)= - (\Delta m + |\nabla m|^2 m ),
\end{equation*}
 provided $m$ is sufficiently smooth.
 Setting
\begin{align*}
\sM(m)=
\begin{bmatrix}
0 &  -m_3 &m_2 \\
m_3 & 0 &  -m_1 \\
-m_2 & m_1 & 0
\end{bmatrix}, \quad m=(m_1,m_2, m_3),
\end{align*}
we can write $m\times \Delta m = \sM(m) \Delta m.$
Set $H=F-I_N$ and $H_0=F_0-I_N$, where $I_N$ is the $N\times N$ identity matrix.
Under the constraint $|m|\equiv 1$, \eqref {magneto sys} is equivalent to  the following system:
\begin{equation}
\label{magneto sys 3}
\left\{\begin{aligned}
\partial_t u + u \cdot \nabla u -    \nabla \cdot (\nu (\theta ) \nabla u )  +\nabla \pi
&=   \nabla \cdot (H H^{\sT}  )  + \nabla\cdot H + \nabla\cdot H^{\sT} -\nabla \cdot (\nabla m \odot \nabla m)    ,\\
\nabla \cdot u &=0 ,\\
\partial_t H + u \cdot\nabla H &=   (\nabla u)^{\sT} H   +   (\nabla u)^{\sT} ,\\
\partial_t\theta -  \nabla \cdot (\kappa(\theta)  \nabla \theta) &= \nu  (\theta)   |\nabla u|^2 + \alpha  (\theta) |\Delta m+|\nabla m|^2m|^2 -  u\cdot\nabla\theta\\
\partial_t m -( \alpha (\theta) I_3- \beta (\theta) \sM(m))\Delta m  &= \alpha (\theta) |\nabla m|^2 m - u \cdot\nabla m  ,\\
|m|  &=1   , \\
(u(0), H(0), \theta (0),  m(0))& =(u_0, H_0, \theta_0,  m_0).
\end{aligned}\right.
\end{equation}

\medskip
We  put $\cA (z): =\alpha (\theta) I_3 - \beta (\theta) \sM(m)  $, where $z=(u,H,\theta, m)$. 
An easy computation shows  
\begin{equation*}
\sigma (\cA (\theta (x), m(x) )= \{ \alpha(\theta (x)),  \alpha (\theta(x)) \pm  i\beta (\theta(x)) |m(x)|\},\quad x\in \bR^N,
\end{equation*}
where $\sigma (\cA) $ denotes the spectrum of $\cA$.

In addition,  we define  
\begin{equation}
\label{def-G}
\begin{split}
G_u(z): &=     \nabla \cdot (H H^{\sT}  )  + \nabla\cdot H + \nabla\cdot H^{\sT}  -\nabla\cdot (\nabla m \odot \nabla m)  -u\cdot \nabla u       ,\\
G_H(z) : &=    (\nabla u)^{\sT} H + (\nabla u)^{\sT}    ,\\
G_{\theta}(z): &=  \nu (\theta) | \nabla u|^2 + \alpha (\theta) | \Delta m + |\nabla m|^2 m |^2   -  u\cdot\nabla\theta, \\
G_m(z) :&= \alpha (\theta) |\nabla m|^2 m -u \cdot\nabla m  ,
\end{split}
\end{equation}
where $z= (u,H, \theta, m)$. 
Given any $r \in \bN \setminus \{0\}$ and   $T>0$, the following function spaces will be used throughout this article:
\begin{align*}
X^r: &= \left\{(u,H,\theta, m): (u,H,\nabla \theta,\nabla m)\in H^r_\sigma \times H^r \times H^{r-1}  \times H^r  \quad\text{and}\quad (\theta, m)\in L^\infty \right\}\\
\widetilde{X}^r :&= H^r_\sigma \times H^r \times H^r  \times H^{r+1} \\
\bE^r_\sigma (J_T) :&=L_2((0,T); H^{r+1}_\sigma) \cap H^1((0,T); H^{r-1}_\sigma)  \\
\bE^r (J_T) :&=L_2((0,T); H^{r+1}) \cap H^1((0,T); H^{r-1})  \\
\bH^r(J_T):&=\left\{f\in L_\infty((0,T)\times\bR^N): \, \nabla f \in \bE^{r-1} (J_T) \text{ and } \partial_t f \in L_2((0,T);H^{r-1})\right\}\\
\bC^r(J_T):&= C(J_T;H^r) \cap C^1(J_T; H^{r-1}) ,
\end{align*}
where $J_T=[0,T]$.
Note that it follows from the Lions-Magenes Theorem, cf. \cite[Lemma 1.2]{MarionTemamBook}, that
\begin{equation}
\label{L-M embedding}
\bE^r (J_T) \hookrightarrow C(J_T; H^r).
\end{equation}
The  following Sobolev embeddings
\begin{equation}
\label{Sobolev embedding}
H^1(\bR^N) \hookrightarrow L_p(\bR^N) \quad\text{for } 2\le p\le 6,    \qquad  H^2(\bR^N) \hookrightarrow BC(\bR^N),  
\end{equation}
as well as  the estimate from the Gagliardo-Nirenberg inequality
\begin{equation}
\label{G-N ineq}
\| f\|_{L_4} \leq C \| f\|_{L_2}^{1-\frac{N}{4}} \| \nabla  f\|_{L_2}^{\frac{N}{4}} 
\end{equation}
will be used throughout.

In the rest of this section, we will state and prove several useful lemmas for our future analysis.
The following Moser type  inequality, cf. \cite[Lemma~3.4]{MajdaAndreabook},  will be frequently used throughout this article.
\begin{lemma}\label{Lem: Moser}
Let $r\in \bN$. Then
\begin{itemize}
\item[{\em (i)}] $\displaystyle  \| fg\|_{H^r} \leq C \left( \|f\|_\infty \| \nabla^r g \|_{L_2} + \| \nabla^r f \|_{L_2} \|g\|_\infty  \right) $.
\item[{\em (ii)}] $\displaystyle \sum_{|\alpha |\leq r} \| \partial^\alpha (fg) - f \partial^\alpha g \|_{L_2} \leq C \left(  \|\nabla f\|_\infty \| \nabla^{r-1} g\|_{L_2} + \|g\|_\infty \|\nabla^r f\|_{L_2} \right) $.
\end{itemize}
\end{lemma}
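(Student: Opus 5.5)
The statement is the classical Moser (Kato--Ponce type) calculus inequality in integer-order $L_2$-Sobolev spaces on $\bR^N$, quoted from \cite[Lemma~3.4]{MajdaAndreabook}. I would prove it by the Leibniz rule combined with the Gagliardo--Nirenberg interpolation inequality
\[
\|\nabla^j h\|_{L_{2r/j}} \le C \|h\|_\infty^{1-j/r}\,\|\nabla^r h\|_{L_2}^{j/r}, \qquad 0\le j\le r .
\]
This holds for $h\in L_\infty$ with $\nabla^r h\in L_2$ on $\bR^N$. As is standard, I read (i) as an estimate of the top-order seminorm, or equivalently of the full $H^r$ norm when the $L_2$ norms of $f,g$ are added on the right; the lower-order terms are handled identically.

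For (i), I fix $|\alpha|=r$ and expand
\[
\partial^\alpha(fg)=\sum_{\beta\le\alpha}\binom{\alpha}{\beta}\,\partial^\beta f\,\partial^{\alpha-\beta}g .
\]
For a term with $|\beta|=j$, H\"older's inequality with exponents $2r/j$ and $2r/(r-j)$, whose reciprocals sum to $1/2$, gives
\[
\|\partial^\beta f\,\partial^{\alpha-\beta} g\|_{L_2}\le \|\nabla^j f\|_{L_{2r/j}}\,\|\nabla^{r-j}g\|_{L_{2r/(r-j)}} .
\]
The interpolation inequality bounds this by
\[
C\,\big(\|f\|_\infty\|\nabla^r g\|_{L_2}\big)^{1-j/r}\,\big(\|g\|_\infty\|\nabla^r f\|_{L_2}\big)^{j/r}.
\]
Young's inequality $a^{1-\theta}b^{\theta}\le a+b$ then finishes (i). The endpoint cases $j=0$ and $j=r$ are trivial.

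For (ii), the commutator $\partial^\alpha(fg)-f\partial^\alpha g$ is exactly the Leibniz sum with $\beta\neq0$. Each such term contains at least one derivative on $f$, so I write it as $\partial^{\beta-e_i}(\partial_i f)\,\partial^{\alpha-\beta}g$. This is a product of $|\beta|-1$ derivatives of $\nabla f$ and $|\alpha|-|\beta|$ derivatives of $g$, of total order $|\alpha|-1\le r-1$.

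I then apply the same H\"older and interpolation argument at order $k=|\alpha|-1$ to the pair $(\nabla f, g)$. This uses
\[
\|\nabla^{j}(\nabla f)\|_{L_{2k/j}}\le C\|\nabla f\|_\infty^{1-j/k}\|\nabla^{k+1} f\|_{L_2}^{j/k}
\]
and the analogue for $g$. The result is the bound $C(\|\nabla f\|_\infty\|\nabla^{k} g\|_{L_2}+\|g\|_\infty\|\nabla^{k+1}f\|_{L_2})$, and summing over $|\alpha|\le r$ gives the claim.

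The only genuinely nontrivial ingredient is the $L_\infty$--$L_2$ Gagliardo--Nirenberg inequality on $\bR^N$. I would take it from the literature or prove it by induction on $r$: integrate by parts in $\int |\nabla^j h|^{p}$ and use H\"older, the standard Nirenberg argument. The remaining care is bookkeeping: track the exponents so the interpolation powers match and Young's inequality applies, and handle the lower-order norms in (ii) when $|\alpha|<r$, which is why the sum is estimated order by order.
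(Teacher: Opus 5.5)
The paper gives no proof of its own here; it only cites Majda--Bertozzi (Lemma~3.4), whose standard proof is exactly your Leibniz--H\"older--Gagliardo--Nirenberg--Young argument, including treating the commutator as a product of derivatives of $\nabla f$ and $g$ of total order $|\alpha|-1$. So your proposal is correct and follows the same route. One point needs the same care you already applied to (i): for $r\ge 2$, the terms with $|\alpha|<r$ in (ii) are controlled only by $C\big(\|\nabla f\|_\infty\|g\|_{H^{r-1}}+\|g\|_\infty\|\nabla f\|_{H^{r-1}}\big)$. The literal homogeneous right-hand side fails by scaling: under $f(\cdot/\lambda)$, $g(\cdot/\lambda)$ with $\lambda\to\infty$, the $|\alpha|=1$ term $(\partial_i f)g$ scales like $\lambda^{N/2-1}$, while the stated bound scales like $\lambda^{N/2-r}$. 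Your step ``summing over $|\alpha|\le r$ gives the claim'' is therefore valid only in that inhomogeneous reading.
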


\noindent
The next result concerns   Nemyskii-type operators.
\begin{lemma}\label{Lem: Nemyskii}
Let $  s, r\in \bN$ with $s\geq 2$ and $\mu\in \scA(  \bR; \bR)$.
\begin{itemize}
\item[{\em (i)}]  If $f\in L_\infty$, $\nabla f \in H^{r-1}$, $\displaystyle  \| \nabla^r \mu (f) \|_{L_2} \leq K(\|f\|_\infty) \| \nabla^r f \|_{L_2} .$
\item[{\em (ii)}] If $f\in \bH^s(J_T)$, then 	$\mu (f)\in  \bH^s(J_T)$.
\end{itemize}
\end{lemma}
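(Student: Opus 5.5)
Part (i) will follow from the Faà di Bruno formula combined with the Gagliardo--Nirenberg interpolation inequality. For a multi-index $\gamma$ with $|\gamma|=r\ge 1$, we expand $\partial^\gamma \mu(f)$ as a finite sum of terms
\[
\mu^{(k)}(f)\,\partial^{\gamma_1}f\cdots\partial^{\gamma_k}f,\qquad 1\le k\le r,\quad \gamma_1+\dots+\gamma_k=\gamma,\quad |\gamma_j|\ge1 .
\]
Since $\mu\in\scA(\bR;\bR)$, each coefficient satisfies $|\mu^{(k)}(f)|\le K_k(\|f\|_\infty)$. It therefore suffices to bound $\|\partial^{\gamma_1}f\cdots\partial^{\gamma_k}f\|_{L_2}$.

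We apply Hölder's inequality with exponents $p_j=2r/|\gamma_j|$, which satisfy $\sum_j 1/p_j=1/2$. The Gagliardo--Nirenberg inequality in the form
\[
\|\nabla^{j} f\|_{L_{2r/j}}\le C\|f\|_\infty^{1-j/r}\|\nabla^r f\|_{L_2}^{j/r},\qquad 1\le j\le r,
\]
is valid on $\bR^N$ for $f\in L_\infty$ with $\nabla^r f\in L_2$, and these are the hypotheses of (i). Multiplying the resulting factors gives
\[
\|f\|_\infty^{k-1}\|\nabla^r f\|_{L_2},
\]
because the exponents of $\|\nabla^r f\|_{L_2}$ add up to $\sum|\gamma_j|/r=1$. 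The power $\|f\|_\infty^{k-1}$ is absorbed into $K$, which proves (i).

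The main technical point in (i) is justifying Gagliardo--Nirenberg when $f$ itself is not in $L_2$: only $\nabla f\in H^{r-1}$ and $f\in L_\infty$ are assumed. To handle this, we mollify $f$, apply the inequality to the smooth bounded approximations (the $L_\infty$ endpoint version does not require integrability of $f$), and pass to the limit using Fatou's lemma.

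For part (ii), we must verify three properties of $\mu(f)$.
\begin{itemize}
\item[(a)] $\mu(f)\in L_\infty$. This is immediate, since $|\mu(f)|\le K_0(\|f\|_\infty)$.
\item[(b)] $\partial_t\mu(f)=\mu'(f)\partial_t f\in L_2(H^{s-1})$. We apply the Moser estimate of Lemma~\ref{Lem: Moser}(i) to the product $\mu'(f)\,\partial_t f$. The factor $\|\nabla^{s-1}\mu'(f)\|_{L_2}$ is controlled by part (i) applied to $\mu'\in\scA$, which yields $K(\|f\|_\infty)\|\nabla f\|_{H^{s-2}}$. This quantity is bounded in time because $\nabla f\in\bE^{s-1}(J_T)\hookrightarrow C(J_T;H^{s-1})$ by \eqref{L-M embedding}. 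We also need $\|\partial_t f\|_\infty$ to be integrable in time. Here we use $\partial_t f\in L_2(H^{s-1})$ together with the relation $\nabla\partial_t f\in L_2(H^{s-2})$. If $s-1>N/2$ the required bound is immediate. For the borderline case $s=2$, $N=3$, we instead distribute derivatives via Hölder/Gagliardo--Nirenberg using $L_4$ norms from \eqref{G-N ineq}.
\item[(c)] $\nabla\mu(f)=\mu'(f)\nabla f\in\bE^{s-1}(J_T)$. For the $L_2(H^{s})$ part, we use Lemma~\ref{Lem: Moser}(i) with
\[
\|\mu'(f)\|_\infty\,\|\nabla^s\nabla f\|_{L_2}+\|\nabla^s\mu'(f)\|_{L_2}\,\|\nabla f\|_\infty .
\]
Part (i) gives $\|\nabla^s\mu'(f)\|_{L_2}\le K\|\nabla^s f\|_{L_2}$, which lies in $L_\infty$ in time. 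Since $\|\nabla f\|_\infty\lesssim\|\nabla f\|_{H^{s}}\in L_2$ in time (recall $s\ge2$), the product is in $L_2$ in time. For the $H^1(H^{s-2})$ part, we write
\[
\partial_t\nabla\mu(f)=\mu''(f)\,\partial_t f\,\nabla f+\mu'(f)\,\nabla\partial_t f
\]
and estimate each term in $L_2(H^{s-2})$ using the same Moser and embedding arguments. Here we exploit $\partial_t f\in L_2(H^{s-1})$ and the time-continuity of $\nabla f$ in $H^{s-1}$.
\end{itemize}

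The anticipated obstacle in (ii) is the low-regularity borderline case $s=2$, $N=3$, where $H^{s-2}=L_2$ and products such as $\partial_t f\,\nabla f$ require $L_4\times L_4$ bounds. These bounds come from \eqref{Sobolev embedding} and \eqref{G-N ineq} applied to $\partial_t f\in L_2(H^1)$ and $\nabla f\in L_\infty(H^1)$.
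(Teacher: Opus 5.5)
Your proposal is correct and follows essentially the same route as the paper. Part (i) uses the chain-rule expansion with H\"older exponents $2r/|\gamma_j|$ and the $L_\infty$-endpoint Gagliardo--Nirenberg inequality, and part (ii) applies (i) to $\nabla\mu(f)$ and a product estimate to $\mu'(f)\partial_t f$, with an $L_4\times L_4$ argument for $s=2$; your separate $\partial_t\nabla\mu(f)$ check in (c) is in fact automatic from (b). One small slip: the criterion $s-1>N/2$ fails at $s=2$ for $N=2$ as well as for $N=3$, but the same $L_4$ argument (as in the paper's $s=2$ case) covers both, and in (b) the requirement is really $\|\partial_t f\|_\infty\in L_2(0,T)$, which $H^{s-1}\hookrightarrow L_\infty$ gives for $s\ge 3$.
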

\begin{proof}
(i) The asserted inequality clearly holds for $r=1$. When $r>1$, it follows from H\"older's inequality, and the Gagliardo-Nirenberg inequality that for $\partial^\sigma$ with $|\sigma|=r-1$
\begin{align*}
& \| \partial^\sigma ( \mu'(f) \nabla f) \|_{L_2} \\
& \leq \|   \mu'(f) \partial^\sigma  ( \nabla f) \|_{L_2}  + C \hspace{-4mm}\sum_{\substack{\tau_i> 0 \\ \tau
   + \sum_{i=1}^h \tau_i=\sigma }} \| \mu^{(h+1)}(f)    \partial^{\tau_1} f  \partial^{\tau_2} f  \cdots \partial^{\tau_h} f  \partial^\tau (\nabla f) \|_{L_2}  \\
& \leq  \|   \mu'(f)\|_\infty  \| \partial^\sigma  ( \nabla f) \|_{L_2}    + C  \| \mu^{(h+1)}(f) \|_\infty \hspace{-4mm} \sum_{\substack{\tau_i> 0 \\ \tau+ \sum_{i=1}^h \tau_i=\sigma }} \left(
\prod_{i=1}^h \|\partial^{\tau_i}f\|_{L_{\frac{2r}{|\tau_i|}}} \right) \|\partial^\tau\nabla f\|_{L_{\frac{2r}{|\tau|+1}}}.  \\
& \leq K(\|f\|_{\infty}) \| \nabla^r f\|_{L_2} + K(\|f\|_{\infty})  \hspace{-4mm} \sum_{\substack{\tau_i> 0 \\ \tau+ \sum_{i=1}^h \tau_i=\sigma }} \left(
\prod_{i=1}^h \|\nabla^{|\tau_i|}f\|_{L_{\frac{2r}{|\tau_i|}}} \right) \| \nabla^{(|\tau|+1)} f\|_{L_{\frac{2r}{|\tau|+1}}} \\
& \leq K(\|f\|_{\infty}) \| \nabla^r f\|_{L_2} + K(\|f\|_{\infty}) \hspace{-4mm} \sum_{\substack{\tau_i> 0 \\ \tau+ \sum_{i=1}^h \tau_i=\sigma }}   \| \nabla^r f\|_{L_2}^{\sum_{i=1}^h\frac{|\tau_i|}{r} + \frac{|\tau|+1}{r} }\\
& = K(\|f\|_{\infty}) \| \nabla^r f\|_{L_2} .
\end{align*}

(ii) The fact that $\mu(f)\in L_\infty$ is obvious. It follows from  Part (i) that
\begin{align*}
\|  \nabla \mu  (f)  \|_{H^s}      =\sum_{0<|\sigma| \leq s+1} \| \partial^\sigma \mu(f)\|_{L_2}  \leq K(\|f\|_\infty) \|\nabla f\|_{H^s}.
\end{align*}
This implies  $\nabla \mu(f) \in L_2((0,T);H^s)$. When $s=2$, \eqref{Sobolev embedding} implies 
\begin{align*}
\|   \mu '(f)  \partial_t f\|_{H^1} &  \le \| \mu'(f)\|_\infty \left( \| \partial_t f\|_{L_2} + \| \nabla \partial_t f\|_{L_2} \right) + \| \mu''(f)\|_\infty \|\nabla f\|_{L_4} \| \partial_t f\|_{L_4} \\
&\leq K(\|f\|_\infty) (1+ \|\nabla f\|_{H^1} ) \| \partial_t f\|_{H^1}. 
\end{align*}
This implies $\partial_t \mu(f) \in L_2((0,T);H^1)$.
When $s\geq 3$, it suffices to estimate
\begin{align*}
\| \partial^\sigma (  \mu '(f)  \partial_t f ) \|_{L_2} &  \leq  C \sum_{\tau \leq \sigma} \| \partial^{\sigma -\tau}  \mu '(f)  \partial^\tau \partial_t f   \|_{L_2} 
\end{align*}
for $2\leq |\sigma| \leq s-1$. 
When $|\tau|=0$, we have
\begin{align*}
 \| \partial^\sigma  \mu '(f)    \partial_t f   \|_{L_2} & \leq  \| \partial^\sigma  \mu '(f)\|_{L_2} \| \partial_t f   \|_\infty \leq K(\|f\|_\infty ) \| \nabla f \|_{H^{s-2}}  \| \partial_t f   \|_{H^{s-1}} \qquad &&\text{when } |\tau|=0 ;\\
  \|  \mu '(f)  \partial^\sigma   \partial_t f   \|_{L_2} & \leq  \|  \mu '(f) \|_\infty  \|\partial^\sigma   \partial_t f   \|_{L_2} \leq K(\|f\|_\infty )  \| \partial_t f   \|_{H^{s-1}} \qquad &&\text{when } \tau=\sigma.
\end{align*}
For all other $\tau\leq \sigma$, 
\begin{align*}
\| \partial^{\sigma -\tau}  \mu '(f)  \partial^\tau \partial_t f   \|_{L_2}  & \leq \| \partial^{\sigma -\tau}  \mu '(f) \|_{L_4} \| \partial^\tau \partial_t f   \|_{L_4} \\
&\leq C  \| \partial^{\sigma -\tau}  \mu '(f) \|_{H^1} \| \partial^\tau \partial_t f   \|_{H^1} \\
&\leq K(\|f\|_\infty ) \| \nabla f \|_{H^{s-2}}  \| \partial_t f   \|_{H^{s-1}}.
\end{align*}
To sum up, we have proved $\partial_t \mu(f) \in L_2((0,T);H^{s-1})$.
\end{proof}

\noindent
Based on \eqref{Sobolev embedding} and Lemma  \ref{Lem: Nemyskii}, we will derive an estimate for a commutator operator.
\begin{lemma}\label{Lem: Commutator}
Given any multi-index $\sigma\in \bN^N$   such that $|\sigma|=r \in \bN$ and $\mu\in {\scA} (\bR; \bR)$, we define
$$
\cR^{\sigma}_{\mu(f)} g : = \partial^\sigma (\mu(f) g)- \mu(f) \partial^\sigma g,
$$
for $g\in   H^k$, $ f \in L_\infty$ and $  \nabla f \in  H^{k-1}$ with $k=\max\{2,r\}$. Then
\begin{align*}
\| \cR^{\sigma}_{\mu(f)} g \|_{L_2}   \leq K (\| f  \|_\infty )   \| \nabla f\|_{H^{k-1}} \|g\|_{H^r} .
\end{align*}
In particular,
$
\| \cR^{\sigma}_{\mu(f)} g \|_{L_2}   \leq C  \| \nabla f\|_{H^{k-1}} \|g\|_{H^r} .
$
\end{lemma}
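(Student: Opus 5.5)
By the Leibniz rule, the commutator contains only the terms in which at least one derivative falls on $\mu(f)$:
\[
\cR^{\sigma}_{\mu(f)} g=\sum_{0<\tau\le\sigma} \binom{\sigma}{\tau}\,\partial^{\tau}\mu(f)\,\partial^{\sigma-\tau} g .
\]
I will estimate each summand in $L_2$ by H\"older's inequality and the embeddings \eqref{Sobolev embedding}. The derivatives of $\mu(f)$ will be controlled through Lemma~\ref{Lem: Nemyskii}(i), which gives $\|\nabla^j\mu(f)\|_{L_2}\le K(\|f\|_\infty)\|\nabla^j f\|_{L_2}$ for $1\le j\le k$. Since $|\tau|\ge1$, every factor $\partial^\tau\mu(f)$ is a derivative of order $|\tau|-1\le r-1\le k-1$ of $\nabla\mu(f)$. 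Consequently
\[
\|\nabla \mu(f)\|_{H^{k-1}}\le K(\|f\|_\infty)\|\nabla f\|_{H^{k-1}},
\]
and it suffices to bound every term by $C\|\nabla\mu(f)\|_{H^{k-1}}\|g\|_{H^r}$. For $r=0$ the commutator vanishes, so I assume $r\ge1$.

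\textbf{Splitting by $|\tau|$.}
\begin{itemize}
\item[(a)] $|\tau|=1$: the factor $\partial^{\sigma-\tau}g$ has order $r-1$. If $r\ge2$, so that $k=r$, I put $\partial^\tau\mu(f)\in H^{k-1}$ with $k-1\ge1$. For $N\le3$ I then use $\|\partial^\tau\mu(f)\|_{L_4}\le C\|\nabla\mu(f)\|_{H^1}$ and $\|\partial^{\sigma-\tau}g\|_{L_4}\le C\|g\|_{H^r}$, because $\partial^{\sigma-\tau}g\in H^1$. If $r=1$, then $k=2$ and $\nabla\mu(f)\in H^1$ only, which is not in $L_\infty$ when $N=3$. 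I still pair in $L_4\times L_4$: $\|\nabla\mu(f)\|_{L_4}\le C\|\nabla\mu(f)\|_{H^1}$ and $\|g\|_{L_4}\le C\|g\|_{H^1}$.
\item[(b)] $|\tau|=r$: the full derivative lands on $\mu(f)$, so I use $\|\partial^\tau\mu(f)\|_{L_2}\,\|g\|_\infty$. This requires $r\ge2$, hence $k=r$ and $\|g\|_\infty\le C\|g\|_{H^2}\le C\|g\|_{H^r}$. The factor $\partial^\tau\mu(f)=\partial^{\tau-e_i}\partial_i\mu(f)$ has order $r-1=k-1$ in $\nabla\mu(f)$.
\item[(c)] $2\le|\tau|\le r-1$ (so $r\ge3$ and $k=r$): both factors have at least one derivative to spare. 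I write $\partial^\tau\mu(f)$ as a derivative of order $|\tau|-1\le k-2$ of $\nabla\mu(f)$, so it lies in $H^1\hookrightarrow L_4$. Likewise $\partial^{\sigma-\tau}g$ has order at most $r-2$, so it is in $H^2\subset H^1\hookrightarrow L_4$. H\"older's inequality in $L_4\times L_4$ then closes the estimate.
\end{itemize}

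\textbf{Main obstacle.} The delicate point is the bookkeeping of the low-regularity case $r\le 2$, where $k=2>r$ and neither factor sits in $L_\infty$ in three dimensions. This is exactly why the lemma uses $k=\max\{2,r\}$. The $L_4\times L_4$ pairing from $H^1\hookrightarrow L_4$ (valid for $N\le 3$, by \eqref{Sobolev embedding}) handles it, as in case~(a).

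Combining (a)--(c) with the Nemytskii bound yields the asserted inequality with the constant $K(\|f\|_\infty)$. The final ``in particular'' statement follows by absorbing $K(\|f\|_\infty)$ into $C$ once $\|f\|_\infty$ is bounded, which is how the estimate is used later.
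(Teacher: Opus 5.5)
Your proof is correct and follows the paper's argument. Both proofs expand the commutator by the Leibniz rule. The term in which all derivatives fall on $\mu(f)$ is handled by an $L_2\times L_\infty$ pairing using $H^2\hookrightarrow L_\infty$. All other terms, including the case $r=1$, are handled by $L_4\times L_4$ pairings via $H^1\hookrightarrow L_4$, and Lemma~\ref{Lem: Nemyskii}(i) converts derivatives of $\mu(f)$ into derivatives of $f$. Your split into $|\tau|=1$ and $2\le|\tau|\le r-1$ is only a more explicit bookkeeping of the paper's single sum over $0<\tau<\sigma$.
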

\begin{proof}
When $r=1$  and  $\sigma=e_j$, Lemma  \ref{Lem: Nemyskii} implies 
 \begin{align*}
\| \cR^{e_j	}_{\mu(f)} g \|_{L_2} & = \| \partial_j (\mu(f) ) g\|_{L_2} \leq \| \partial_j \mu(f)\|_{L_4} \|g\|_{L_4} \\
&\leq  C \| \partial_j \mu(f)\|_{H^1} \|g\|_{H^1} \leq  K (\| f  \|_\infty )   \| \nabla f\|_{H^1} \|g\|_{H^1} .
\end{align*}
When $r\geq 2$, similar computations show
\begin{align*}
\| \cR^{\sigma}_{\mu(f)} g \|_{L_2} &   \le   \| \partial^\sigma ( \mu(f)) g\|_{L_2} + C \sum_{0<\tau<\sigma} \| \partial^\tau ( \mu(f)) \partial^{\sigma -\tau}g\|_{L_2} \\
&\leq   K (\| f  \|_\infty )   \| \nabla^r f\|_{L_2} \|g\|_{H^r}+ C  \sum_{0<\tau<\sigma}\| \partial^\tau ( \mu(f))\|_{H^1} \|\partial^{\sigma -\tau} g\|_{H^1} \\
& \leq  K (\| f  \|_\infty )   \| \nabla f\|_{H^{r-1}} \|g\|_{H^r} .
\end{align*}
\end{proof}

\noindent
Finally, we will need the following estimate for transported equations.
\begin{lemma}
\label{lem: transport-est}
For   $v,u\in H^s$, it holds
\begin{equation}
\label{transport-est}
\| v \nabla u\|_{H^{s-1}} \leq C \|v\|_{H^s} \| \nabla u\|_{H^{s-1}} .
\end{equation}
\end{lemma}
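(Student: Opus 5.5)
We prove \eqref{transport-est} by estimating $\partial^\sigma(v\,\partial_j u)$ in $L_2$ for every multi-index $|\sigma|\le s-1$ and expanding with the Leibniz rule:
\[
\partial^\sigma(v\,\partial_j u)=\sum_{\tau\le\sigma}\binom{\sigma}{\tau}\,\partial^{\tau}v\;\partial^{\sigma-\tau}\partial_j u .
\]
The point is that only $\nabla u\in H^{s-1}$ is used, never $u$ itself in $L_2$. Every factor $\partial^{\sigma-\tau}\partial_j u$ is a derivative of $\nabla u$ of order at most $s-1$, and every factor $\partial^\tau v$ is a derivative of $v$ of order at most $s-1$. We split the terms according to $|\tau|$.

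\emph{Term $\tau=0$.} Since $s>\frac N2$, we have $H^s\hookrightarrow L_\infty$, so
\[
\|v\,\partial^\sigma\nabla u\|_{L_2}\le \|v\|_\infty\|\nabla u\|_{H^{s-1}}\le C\|v\|_{H^s}\|\nabla u\|_{H^{s-1}}.
\]

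\emph{Terms $|\tau|=s-1$.} Here $\sigma=\tau$, so the second factor is $\nabla u$ itself. We place $\partial^\tau v\in H^1\hookrightarrow L_4$ (valid for $N\le 3$) and $\nabla u\in H^{s-1}\hookrightarrow H^1\hookrightarrow L_4$, using $s\ge2$. This gives the bound $C\|v\|_{H^s}\|\nabla u\|_{H^{s-1}}$. Alternatively, when $s\ge 3$ the second factor lies in $H^{s-1}\hookrightarrow L_\infty$.

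\emph{Intermediate terms $0<|\tau|<s-1$.} Write $a=|\tau|$ and $b=|\sigma|-|\tau|\le s-1-a$. Then $\partial^\tau v\in H^{s-a}$ with $s-a\ge 2$, and $\partial^{\sigma-\tau}\nabla u\in H^{s-1-b}$ with $s-1-b\ge a\ge 1$. Both factors therefore lie in $H^1\hookrightarrow L_4$, and Hölder's inequality, $\|fg\|_{L_2}\le\|f\|_{L_4}\|g\|_{L_4}$, gives the bound $C\|v\|_{H^s}\|\nabla u\|_{H^{s-1}}$. Summing over all $\sigma$ and $\tau$ yields \eqref{transport-est}.

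The only real subtlety is the dimension restriction. The embedding $H^1\hookrightarrow L_4$ requires $N\le 4$, and it suffices here because $N\in\{2,3\}$. For general $N$, one would instead distribute the integrabilities via Gagliardo--Nirenberg, exactly as in the proof of Lemma~\ref{Lem: Nemyskii}(i).

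An alternative and shorter route is to apply Lemma~\ref{Lem: Moser}(i) with $r\le s-1$:
\[
\|v\nabla u\|_{H^{s-1}}\le C\bigl(\|v\|_\infty\|\nabla u\|_{H^{s-1}}+\|\nabla^{s-1}v\|_{L_2}\|\nabla u\|_\infty\bigr).
\]
The first term is fine, but $\|\nabla u\|_\infty$ is not controlled by $\|\nabla u\|_{H^{s-1}}$ when $s=2$. That is why the Hölder--$L_4$ splitting above is the main step for $s=2$, while for $s\ge3$ the Moser route alone suffices.
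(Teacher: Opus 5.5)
Your proof is correct and is essentially the paper's argument. For $s=2$ the paper does exactly your Leibniz/H\"older computation, with $\|v\|_\infty$ on the terms where no derivative falls on $v$ and $H^1\hookrightarrow L_4$ for $\|\nabla v\|_{L_4}\|\nabla u\|_{L_4}$; for $s\ge 3$ it simply invokes that $H^{s-1}$ is a Banach algebra (since $s-1>N/2$), which your intermediate-term bookkeeping reproves by hand.
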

\begin{proof}
When $s \geq 3$, the fact that $H^{s-1}$ is a Banach algebra implies
$$
\| v \nabla u\|_{H^{s-1}}  \leq C \|v\|_{H^{s-1}} \|\nabla u\|_{H^{s-1}}  .
$$
When $s=2$, we obtain with \eqref{Sobolev embedding} 
\begin{align*}
\| v \nabla u\|_{H^{s-1}}  & \leq    \| v   \nabla  u  \|_{L_2} + \|\nabla (  v   \nabla  u) \|_{L_2} \\
&\leq C \|v\|_{\infty}\|\nabla u\|_{L_2} + C \|\nabla v\|_{L_4} \|\nabla u\|_{L_4} + C \|v\|_{\infty}  \|\nabla^2 u\|_{L_2}    \\
&\leq  C \|v\|_{H^s} \| \nabla u\|_{H^{s-1}} .
\end{align*}
\end{proof}

\section{Local well-posedness}\label{Section:Local wellposed}
We will establish the local existence and uniqueness of a solution to  \eqref{magneto sys 3} by means of a   linear iteration argument.
For properly chosen positive constants $\sd_0, \sd_1, M_1, M_2,M_3 $, the invariant set during the iteration is defined as $R_T( \sd_0, \sd_1, M_1, M_2,M_3   )$, which is the collection of all $z=(u,H,\theta,m)$ such that $|m(0)|\equiv 1$ and  
\begin{itemize}
\item $0<\sd_0 \leq \theta(0)  \leq \sd_1 $ a.e., and
\item $(u,H,  \theta,   m) \in \bE^s_\sigma(J_T)\times \bC^s(J_T)\times \bH^s (J_T) \times \bH^{s+1} (J_T)$, and
\item the following bounds are satisfied
\begin{equation}
\label{invariant estimmate}
\begin{split}
\sup\limits_{t\in [0,T]} \left( \|u(t)\|_{H^s}^2 + \|H (t)\|_{H^s}^2 +\|\nabla \theta (t)\|_{H^{s-1}}^2 +  \| \nabla m(t)\|_{H^s}^2  \right) &\leq M_1^2 \\
\sup\limits_{t\in  [0,T]}   \|  \theta(t) - \theta (0)\|_{L_2} + \sup\limits_{t\in  [0,T]}   \|  m(t) - m (0)\|_{L_2} & \leq M_2 \\
\int_0^T \|u(t)\|_{H^{s+1}}^2 \, dt + \int_0^T \|\nabla \theta (t)\|_{H^s}^2 \, dt +  \int_0^T \| \Delta m(t)\|_{H^{s}}^2 \, dt & \leq M_3^2 . 
\end{split}
\end{equation}
\end{itemize}
We will first prove some a priori estimates for the solutions to the following   linearized system
\begin{equation}
\label{magneto sys abstract linear}
\left\{\begin{aligned}
\partial_t u  -  \nabla \cdot (\nu (\vartheta)   \nabla u  )   +\nabla \pi 
&= G_u(\sz) ,\\
\nabla \cdot u &=0 ,\\
 \partial_t H + v \cdot\nabla H   &=  G_H (\sz)      ,\\
\partial_t \theta -  \nabla \cdot (\kappa (\vartheta) \nabla \theta)    &=  G_\theta  (\sz)   ,\\
\partial_t m -   \cA(\sz) \Delta m    &= G_m  (\sz),\\
(u(0), H(0), \theta (0),m_0 )& = (u_0, H_0, \theta_0 , m_0 ) , 
\end{aligned}\right.
\end{equation}
where $\sz=(v, \cG, \vartheta, h)$ and  $z_0=( u_0, H_0, \theta_0, m_0)$ are given functions.

\begin{proposition}
\label{prop: existence linear}
Let $T>0$    be given.  Assume that 
$$
z_0 =(u_0,H_0, \theta_0 , m_0 )\in X^s  \qquad \text{such that}\quad  |m_0|\equiv 1 , \quad 0<\sd_0 \leq \theta_0 \leq \sd_1
$$ 
for some constants $\sd_0$ and $\sd_1$, and $\sz= (v, \cG,  \vartheta,h )\in R_T(\sd_0, \sd_1,M_1,M_2,M_3)$.
Then  \eqref{magneto sys abstract linear} has a solution $(u , \pi,H,\theta , m )$  such that $(u,H,  \theta,   m) \in \bE^s_\sigma(J_T)\times \bC^s(J_T)\times \bH^s (J_T) \times \bH^{s+1} (J_T) $ and $\nabla \pi\in L_2((0,T); H^{s-1})$.
\end{proposition}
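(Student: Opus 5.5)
The key observation is that \eqref{magneto sys abstract linear} is completely decoupled once $\sz=(v,\cG,\vartheta,h)$ is frozen. All right-hand sides $G_u(\sz),G_H(\sz),G_\theta(\sz),G_m(\sz)$ are given functions, and the coefficients $\nu(\vartheta)$, $\kappa(\vartheta)$, $\cA(\sz)=\alpha(\vartheta)I_3-\beta(\vartheta)\sM(h)$ and the transport field $v$ are known. So I would treat the four equations separately. For each I would check that the data lie in the right space, and then invoke a linear existence theory (Galerkin or energy method plus continuity) for that equation alone.

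\textbf{Step 1: forcing terms.} Using $\sz\in R_T$, Lemma~\ref{Lem: Moser}, Lemma~\ref{Lem: Nemyskii} and Lemma~\ref{lem: transport-est}, I would verify that
\[
G_u(\sz)\in L_2((0,T);H^{s-1}),\quad G_H(\sz)\in L_2((0,T);H^{s}),\quad G_\theta(\sz)\in L_2((0,T);H^{s-1}),\quad G_m(\sz)\in L_2((0,T);H^{s}).
\]
For example, $\nabla\cdot(\cG\cG^{\sT})$ and $\nabla\cdot(\nabla h\odot\nabla h)$ lie in $L_\infty H^{s-1}$ because $H^s$ is an algebra. For $G_H=(\nabla v)^{\sT}(\cG+I_N)$ one uses $v\in L_2H^{s+1}$ and $\cG\in C(J_T;H^s)$. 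For $G_\theta$ and $G_m$ one uses $\nabla h\in L_\infty H^s$ together with $\Delta h\in L_2H^s$, and $|h|\in L_\infty$. The coefficient bounds $\underline\nu\le\nu(\vartheta)$, $\kappa(\vartheta)\ge\underline\kappa I_N$ and $\mathrm{Re}\,\sigma(\cA)=\alpha(\vartheta)\ge\underline\alpha$ give uniform parabolicity. Note that $\mathrm{Re}(\cA\xi\cdot\bar\xi)=\alpha|\xi|^2$, since $\sM(h)$ is skew.

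\textbf{Step 2: the parabolic equations.} For $u$, I would apply $\PH$ and solve the variable-coefficient Stokes problem by Galerkin approximation in $H^s_\sigma$. The estimates come from applying $\partial^\sigma$, $|\sigma|\le s$, and testing with $\partial^\sigma u$. The commutator $\cR^\sigma_{\nu(\vartheta)}\nabla u$ is controlled via Lemma~\ref{Lem: Commutator} by $\|\nabla\vartheta\|_{H^{s-1}}\|u\|_{H^{s}}$ plus a term absorbed into $\underline\nu\|\nabla\partial^\sigma u\|^2$. Gronwall then gives $u\in L_2H^{s+1}\cap L_\infty H^s$. The bound $\partial_tu\in L_2H^{s-1}$ and $\nabla\pi\in L_2H^{s-1}$ follow from the equation, with $\pi$ recovered as the gradient part of $(I-\PH)[\nabla\cdot(\nu\nabla u)+G_u]$.

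For $\theta$ and $m$ the difficulty is that $\theta_0,m_0\notin L_2$. I would therefore write $\theta=\theta_0+\tilde\theta$ and $m=m_0+\tilde m$. Then $\tilde\theta,\tilde m$ solve the same parabolic equations with zero initial data and extra forcing $\nabla\cdot(\kappa(\vartheta)\nabla\theta_0)\in L_2H^{s-2}$ and $\cA(\sz)\Delta m_0\in L_2H^{s-1}$. With these gradients in the correct spaces, the same Galerkin and energy argument applies to $\nabla\tilde\theta$ and $\nabla\tilde m$, together with $L_2$ bounds on $\tilde\theta,\tilde m$ themselves. Boundedness $\theta\in L_\infty$ follows from $\tilde\theta\in C(J_T;H^{s})$ when $s\ge2$; in general one uses $\nabla\tilde\theta\in L_\infty H^{s-1}$ plus $\tilde\theta\in L_\infty L_2$. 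The same reasoning gives $m\in L_\infty$.

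\textbf{Step 3: the transport equation.} The equation for $H$ is a linear transport equation with $v\in L_2H^{s+1}\cap C(J_T;H^s)$, so $\nabla v\in L_1L_\infty$. Characteristics give existence, or one can mollify the data, solve, and pass to the limit. The standard $H^s$ estimate uses Lemma~\ref{Lem: Moser}(ii) for the commutator $[\partial^\sigma,v\cdot\nabla]H$:
\[
\tfrac{d}{dt}\|H\|_{H^s}\le C\|\nabla v\|_{H^s}\|H\|_{H^s}+\|G_H\|_{H^s}.
\]
Strong continuity $H\in C(J_T;H^s)$ needs the usual Friedrichs/Kato argument. Then $\partial_tH=-v\cdot\nabla H+G_H\in C(J_T;H^{s-1})$ by Lemma~\ref{lem: transport-est}.

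\textbf{Main obstacle.} I expect the hardest step to be Step 2 for $\theta$ and $m$: the non-$L_2$ data and the minimal regularity $\nabla\vartheta\in L_\infty H^{s-1}$ of the coefficients. At top order the commutator estimates are borderline, and the $\partial_t\theta\in L_2H^{s-1}$ bound requires care. I would handle this by working throughout at the level of $\nabla\theta$, using Lemma~\ref{Lem: Commutator} with $k=\max\{2,r\}$. Any top-derivative loss would be absorbed by the dissipation via Young's inequality.
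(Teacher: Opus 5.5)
Your treatment of $u$ (Galerkin approximation, then $\PH$ and $I-\PH$ to recover $\partial_t u$ and $\nabla\pi$) and of $H$ (linear transport theory) is essentially what the paper does. The gap is in Step~2 for $\theta$ and $m$: lifting the non-$L_2$ data by the time-independent functions $\theta_0$ and $m_0$ costs exactly one derivative.

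With zero initial data and the extra forcing $\nabla\cdot(\kappa(\vartheta)\nabla\theta_0)\in L_2((0,T);H^{s-2})$, the energy/Galerkin argument for $\tilde\theta$ yields only $\nabla\tilde\theta\in L_2((0,T);H^{s-1})$ and $\partial_t\tilde\theta\in L_2((0,T);H^{s-2})$. But $\theta\in\bH^s(J_T)$ requires $\nabla\theta\in L_2((0,T);H^{s})$ and $\partial_t\theta=\partial_t\tilde\theta\in L_2((0,T);H^{s-1})$. Likewise, $\cA(\sz)\Delta m_0\in L_2((0,T);H^{s-1})$ gives only $\nabla\tilde m\in L_2((0,T);H^{s})$ and $\partial_t\tilde m\in L_2((0,T);H^{s-1})$, one order short of $\bH^{s+1}(J_T)$.

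Young's inequality cannot close this. At top order $|\sigma|=s$ the lifting term is $-(\partial^\sigma(\kappa(\vartheta)\nabla\theta_0)\,|\,\partial^\sigma\nabla\tilde\theta)$, and bounding it requires $\nabla\theta_0\in H^s$, which you do not have. In fact no such estimate can hold. For the true solution, $\nabla\tilde\theta=\nabla\theta-\nabla\theta_0\notin L_2((0,T);H^s)$ whenever $\nabla\theta_0\notin H^s$. The extra derivative of $\nabla\theta$ near $t=0$ comes from parabolic smoothing of the data, which a constant lift does not capture. The same defect undermines your $L_\infty$ bound: your scheme only delivers $\tilde\theta\in C(J_T;H^{s-1})$, which does not embed in $L_\infty$ for $s=2$.

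The paper instead lifts with the caloric extensions $\theta_1=e^{t\Delta}\theta_0$ and $m_1=e^{t\Delta}m_0$. By maximal regularity of the Laplacian, $\nabla\theta_1\in\bE^{s-1}(J_T)$ and $\partial_t\theta_1\in L_2((0,T);H^{s-1})$. Moreover $\theta_1\in L_\infty$ and $\|\theta_1(t)-\theta_0\|_{L_2}^2\le \frac t2\|\nabla\theta_0\|_{L_2}^2$. Hence $\phi=\theta-\theta_1$ has zero data and forcing $g_\theta+\nabla\cdot(\kappa(\vartheta)\nabla\theta_1)-\partial_t\theta_1\in L_2((0,T);H^{s-1})$ by Lemma~\ref{Appendix_lem}. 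This is precisely the level needed for $\phi\in\bE^s(J_T)$. Analogously, the $m$-remainder has forcing in $L_2((0,T);H^{s})$.

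Alternatively, you could keep your energy method but apply it to $\theta$ itself at the gradient level, as in Section~\ref{sec.local.theta.est}. That means testing $\partial^\sigma$ of the equation against $\partial^\sigma\theta$ for $0<|\sigma|\le s$, with initial value $\nabla\theta_0\in H^{s-1}$, and separately estimating $\|\theta-\theta_0\|_{L_2}$. You would then also need to approximate $\theta_0$ by data with smoother gradients and pass to the limit. As written, the decomposition $\theta=\theta_0+\tilde\theta$ with estimates on $\tilde\theta$ alone cannot produce the claimed regularity.
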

\begin{proof}
See the Appendix.
\end{proof}

\subsection{A priori estimates}\label{sec.local.est}

In this subsection, we will first obtain some fundamental a priori estimates for the linearized system.
For $i=1,2$, assume that
$\sz_i= (v_i, \cG_i,  \vartheta_i,h_i ) $ and  $z_0\in (u_0,H_0, \theta_0 , m_0 )$ satisfy the conditions in Proposition~\ref{prop: existence linear}.
For notational brevity, we put 
$$\sg_i=( g_{u,i},g_{H,i},g_{\theta,i},  g_{m,i})= (G_u(\sz_i), G_H (\sz_i), G_\theta (\sz_i), G_m(\sz_i)).$$
Let $z_i=(u_i,F_i,\theta_i, m_i)$ be a solution  to \eqref{magneto sys abstract linear} asserted by Proposition~\ref{prop: existence linear} corresponding to $(\sz_i,z_0)$. 
In the rest of this subsection, it is understood that an object without the index $i$ corresponds to the case $i=1$, e.g. $z=z_1$, $\sz=\sz_1$, and $\sg=\sg_1$. 

In the reminder of this subsection, we will first derive the fundamental a priori estimates for the solution $z$ to the linearized system~\eqref{magneto sys abstract linear} with $\sz \in R_T(\sd_0, \sd_1,M_1,M_2,M_3)$. 
It is understood that an object without the index $i$ corresponds to the case $i=1$, e.g. $z=z_1$, $\sz=\sz_1$, and $\sg=\sg_1$. 

The second step is to obtain the fundamental a priori estimates for the difference   $\tilde{z}$, which satisfies
\begin{equation}
\label{magneto sys abstract linear difference}
\left\{\begin{aligned}
\partial_t \tilde{u}  -  \nabla \cdot ( (\nu (\vartheta_1)   \nabla  \tilde{u}  )   +\nabla  \tilde{\pi }
&=  \tilde{g}_u  +  \nabla \cdot (\nu (\vartheta_1) - \nu (\vartheta_2) )  \nabla  u_2 ),\\
\nabla \cdot \tilde{u} &=0 ,\\
 \partial_t \tilde{H} + v_1 \cdot\nabla \tilde{H}   &=  \tilde{g}_H -\tilde{v} \cdot\nabla H_2     ,\\
\partial_t \tilde{\theta} -  \nabla \cdot (\kappa (\vartheta_1) \nabla \tilde{\theta})    &=  \tilde{g}_\theta +\nabla \cdot ( (\kappa (\vartheta_1) -\kappa (\vartheta_2))\nabla \theta_2)     ,\\
\partial_t \tilde{m} -   \cA(\sz_1) \Delta \tilde{m}    &= \tilde{g}_m +  (\cA(\sz_1)-\cA(\sz_2))   \Delta m_2 ,\\
(\tilde{u}(0), \tilde{H}(0), \tilde{\theta} (0),\tilde{m}_0 )& = 0 , 
\end{aligned}\right.
\end{equation}
where $\tilde{f}=f_1-f_2$ with $f\in \{z,u,\pi, H,\theta, m, \sz, v,\cG,\vartheta, h, g_u,g_H,g_{\theta},g_m \}$,
under the assumption 
\[
z_1,z_2, \sz_1,\sz_2 \in R_T(\sd_0, \sd_1,M_1,M_2,M_3) .
\] 
Let $r\in \{s-1,s\}$. 
Throughout the following analysis, $C_0 \geq 1$ denotes a generic constant.

\subsubsection{Estimates of $H$-equation}\label{sec.local.H.est}
We will begin with the estimates for $H$. 
Take $\partial^\sigma$ on both sides of \eqref{magneto sys abstract linear}$_3$ for $|\sigma|\leq r$, multiply it by $\partial^\sigma H$ and then integrate over $ \bR^N$. This yields
\begin{align*}
\frac{1}{2} \frac{d}{dt} \| H \|_{H^r}^2  + \sum\limits_{|\sigma|\leq r} ( \partial^\sigma (v^k \partial_k H ) | \partial^\sigma H ) = \sum\limits_{|\sigma|\leq r}(\partial^\sigma g_H | \partial^\sigma H).
\end{align*}
The RHS is majorized by $C \|g_H\|_{H^r} \| H\|_{H^r}$. Using the incompressibility condition $\nabla \cdot v =0$, we immediately find that the second term on the LHS is given by
\begin{align*}
( \partial^\sigma (v^k \partial_k H ) | \partial^\sigma H )   =  ( \partial^\sigma (v^k \partial_k H ) - v^k \partial_k (\partial^\sigma H) | \partial^\sigma H ) = \sum_{\tau<\sigma}  
\begin{pmatrix}
\sigma \\
\tau
\end{pmatrix}
 ( \partial^{\sigma  -\tau} v^k \partial_k (\partial^\tau H) | \partial^\sigma H ) .
\end{align*}
When $|\tau|=|\sigma|-1$,  \eqref{Sobolev embedding} implies 
\begin{align*}
 ( \partial^{\sigma  -\tau} v^k \partial_k (\partial^\tau H) | \partial^\sigma H ) \leq C \|\partial^{\sigma  -\tau} v \|_{L_\infty} \| H\|_{H^r}^2 \leq C \| v \|_{H^{s+1}} \| H\|_{H^r}^2 .
\end{align*}
When $|\tau|<|\sigma|-1$, a similar computation leads to
\begin{align*}
 ( \partial^{\sigma  -\tau} v^k \partial_k (\partial^\tau H) | \partial^\sigma H ) \leq C \|\partial^{\sigma  -\tau}  v \|_{L_4} \|\partial_k (\partial^\tau H) \|_{L_4} \| H\|_{H^r}  \leq C \| v \|_{H^{s+1}} \| H\|_{H^r}^2 .
\end{align*}
These discussions lead to 
\begin{equation}
\label{EE linear H derivative}
\begin{split}
\frac{d}{dt} \| H \|_{H^r}   \leq C \|v\|_{H^{s+1}} \|H\|_{H^r} + C \|g_H\|_{H^r}.
\end{split}
\end{equation}
{Applying Gronwall and H\"older's inequality  for $r=s$ shows that}  
\begin{equation}
\label{EE linear-H}
\begin{split}
\sup_{t\in [0,T]} \|H(t) \|_{H^s}^2   & \leq   e^{ \sqrt{T}  C_0 M_3 } \left( \|H_0\|_{H^s}^2 + C_0 T \int_0^T \|g_H (t)\|_{H^s}^2 \, d t  \right)    .
\end{split}
\end{equation}
For $r=s-1$, applying \eqref{EE linear H derivative} to \eqref{magneto sys abstract linear difference}$_3$ yields 
\begin{equation}
\label{EE linear-H-difference}
\begin{split}
&\sup_{t\in [0,T]}\|\tilde{H}(t) \|_{H^{s-1}}^2     \leq   e^{ \sqrt{T}  C_0 M_3 } \left(   C_0  T  \int_0^T \|\tilde{g}_H (t)\|_{H^{s-1}}^2 \, d t +    T K(M_1) \int_0^T \| \tilde{v}(t)\|_{H^s}^2 \, dt  \right)     ,
\end{split}
\end{equation}
where we have used \eqref{transport-est} to gain the estimate
\begin{align*}
\| \tilde{v} \nabla H_2 \|_{H^{s-1}} \leq K(M_1) \| \tilde{v}(t)\|_{H^s}.
\end{align*}

\subsubsection{Estimates of $\theta$-equation}\label{sec.local.theta.est}

Setting $h_0=h(0)$ and $\vartheta_0=\vartheta (0)$,
we will first derive an $L_\infty$-estimate for the functions $(\vartheta, h)$
appearing in the definition of  $\sg$.
\begin{equation}
\label{est h L infty}
\begin{split}
\|h\|_{L_\infty} & \leq \| h - h_0\|_{L_\infty} + \| h_0\|_{L_\infty}  \leq C \| h - h_0\|_{H^2} + 1   \\
&\leq C \| h   - h_0\|_{L_2} + \| \nabla h\|_{H^s} + \| \nabla h_0\|_{H^s} +1  = C \| h   -h_0\|_{L_2}  +K(M_1).
\end{split}
\end{equation}
Similarly,
\begin{equation}
\label{est vartheta L infty}
\begin{split}
\|\vartheta\|_{L_\infty} & \leq  C \|\vartheta   - \vartheta_0\|_{L_2} +  K(M_1 +\sd_1).
\end{split}
\end{equation}
To estimate \eqref{magneto sys abstract linear}$_4$,  we take $\partial^\sigma$ with $0<|\sigma| \leq r$ on both sides of \eqref{magneto sys abstract linear}$_4$,  multiply it  by $\partial^\sigma \theta$, and  then integrate the resulting equality over $ \bR^N$.
\begin{align*}
\frac{1}{2} \frac{d}{dt} \| \nabla \theta \|_{H^{r-1}}^2 - \sum_{0<|\sigma|\leq r}( \partial^\sigma\ \nabla \cdot  (\kappa(\vartheta) \nabla \theta) | \partial^\sigma \theta ) = \sum_{0<|\sigma|\leq r}( \partial^\sigma g_\theta | \partial^\sigma \theta) .
\end{align*}
Direct computations show
\begin{align*}
 \sum_{0<|\sigma|\leq r}( \partial^\sigma\ \nabla \cdot  (\kappa(\vartheta) \nabla \theta) | \partial^\sigma \theta )   
&= - \sum_{0<|\sigma|\leq r}( \kappa (\vartheta) \partial^\sigma \nabla \theta |  \partial^\sigma \nabla \theta )  -\sum_{0<|\sigma|\leq r} (\cR^\sigma_\kappa \nabla \theta|  \partial^\sigma \nabla \theta ) \\
& \leq - \underline{\kappa} \| \Delta \theta\|_{H^{r-1}}^2  - \sum_{0<|\sigma|\leq r} \sum_{\tau<\sigma} 
\begin{pmatrix}
\sigma\\
\tau
\end{pmatrix}
 (\partial^{\sigma-\tau}\kappa (\vartheta) \nabla \partial^\tau \theta|  \partial^\sigma \nabla \theta ).
\end{align*}
When $\tau=0$ and $r\geq 2$, we can apply Lemma~\ref{Lem: Nemyskii}, \eqref{est vartheta L infty}, and Young's inequality to obtain
\begin{align*}
  (\partial^{\sigma }\kappa (\vartheta) \nabla   \theta|  \partial^\sigma \nabla \theta )   &\leq C   \|\partial^{\sigma } \kappa(\vartheta) \|_{L_2} \| \nabla \theta \|_{L_\infty} \|\nabla \theta\|_{H^{r}} \\
  & \leq K(\|\vartheta\|_{L_\infty} ) \| \nabla \vartheta\|_{H^{s-1}} \| \nabla \theta \|_{H^{r-1}}^{1-\gamma} \| \nabla \theta \|_{H^{r}}^{1+\gamma} \\
 &\leq  \varepsilon \| \nabla \theta \|_{H^{r}}^2 +   K(M_1 + M_2+ \sd_1) \| \nabla \theta \|_{H^{r-1}}^2   ,
\end{align*}
where  we have used the Sobolev  embedding estimate  $\|f\|_{L_\infty} \leq C \|f \|_{H^{1+\gamma}}$ and   interpolation theory to obtain
\begin{equation}
\label{W1 est}
\|f\|_{L_\infty} \leq C \|f \|_{H^{1+\gamma}} \leq C \| f\|_{H^1}^{1-\gamma} \| f\|_{H^{2}}^\gamma  ,\quad \gamma\in \left( \frac{1}{2},1 \right).
\end{equation}
If $r=1$, then
\begin{align*}
(\partial^{\sigma }\kappa (\vartheta) \nabla   \theta|  \partial^\sigma \nabla \theta )  & \leq C \| \partial^\sigma \kappa(\vartheta) \|_{L_4} \| \nabla \theta \|_{L_4} \| \nabla \theta\|_{H^{r}}  \\
 &\leq K(\|\vartheta\|_{L_\infty} ) \|  \nabla \vartheta\|_{H^{s-1}} \| \nabla \theta \|_{H^{r-1}}^{1-\frac{N}{4}}   \| \nabla \theta \|_{H^{r}}^{1+\frac{N}{4}}   \\
 &\leq  \varepsilon \| \nabla  \theta \|_{H^{r}}^2 +   K(M_1 + M_2+\sd_1 ) \| \nabla  \theta \|_{H^{r-1}}^2   .
\end{align*}
When $0<\tau < \sigma$, \eqref{Sobolev embedding} and \eqref{G-N ineq} show
\begin{align*}
(\partial^{\sigma-\tau}\kappa (\vartheta) \nabla \partial^\tau \theta|  \partial^\sigma \nabla \theta )  &  \leq \|\partial^{\sigma-\tau}\kappa (\vartheta) \|_{L_4} \| \nabla \partial^\tau \theta \|_{L_4} \|\nabla \theta \|_{H^{r}} \\ 
&\leq K(\|\vartheta\|_{L_\infty} ) \| \nabla \vartheta\|_{H^{s-1}} \|\nabla \theta \|_{H^{r-1}}^{1- \frac{N}{4}}	\|\nabla \theta \|_{H^{r}}^{1+ \frac{N}{4}}\\
& \leq  \varepsilon \| \nabla  \theta \|_{H^{r}}^2 +   K(M_1 + M_2 +\sd_1 ) \| \nabla  \theta \|_{H^{r-1}}^2   .
\end{align*}
Note that in this case $r\geq 2$.
Young's inequality implies 
\begin{align*}
\sum_{0<|\sigma|\leq r}( \partial^\sigma g_\theta | \partial^\sigma \theta) \leq \varepsilon \| \nabla  \theta\|_{H^{r}}^2 + C(\varepsilon)  \| g_\theta \|_{H^{r-1}}^2.
\end{align*}
Hence choosing $\varepsilon>0$ sufficiently small, we derive  
\begin{equation}
\label{EE linear theta-derivative}
\begin{split}
\frac{d}{dt} \| \nabla \theta \|_{H^{r-1}}^2 + C \|\nabla \theta \|_{H^{r}}^2     \leq  K( M_1 + M_2+\sd_1) \| \nabla \theta \|_{H^{r-1}}^2 + C \| g_{\theta} \|_{H^{r-1}}^2  .
\end{split}
\end{equation}
Multiplying both sides of \eqref{magneto sys abstract linear}$_4$ by $\theta-\theta_0$ and integrating over $\bR^N$ yields
\begin{align*}
\frac{1}{2} \frac{d}{dt}  \| \theta-\theta_0 \|_{L_2}^2  &\leq  ( \nabla \cdot (\kappa(\vartheta) \nabla \theta )| \theta-\theta_0 ) + (g_\theta | \theta-\theta_0 ) \\
&\leq \left( \|\kappa(\vartheta)\|_\infty \|\Delta \theta\|_{L_2} + \| \nabla  \kappa(\vartheta)\|_{L_4} \|\nabla \theta\|_{L_4}   + \| g_{\theta} \|_{L_2}\right)\| \theta-\theta_0 \|_{L_2} \\
&\leq   \| \theta-\theta_0 \|_{L_2}^2 +   \| g_{\theta} \|_{L_2}^2 + K(M_1+M_2+ \sd_1 ) \| \nabla \theta \|_{H^{s-1}}^2 .
\end{align*}
Combined with \eqref{EE linear theta-derivative}, the above estimate implies that when $r=s$,
\begin{equation*}
\begin{split}
& \frac{d}{dt} \left(\| \nabla \theta \|_{H^{s-1}}^2 +\| \theta-\theta_0 \|_{L_2}^2  \right) + C \|\nabla \theta \|_{H^{s}}^2   \\
&  \leq  K( M_1 + M_2+\sd_1) \left(\| \nabla \theta \|_{H^{s-1}}^2 +\| \theta-\theta_0 \|_{L_2}^2   \right) + C \| g_{\theta} \|_{  H^{s-1}}^2  .
\end{split}
\end{equation*}
Applying Gronwall's inequality yields    
\begin{equation}
\label{EE linear-theta}
\begin{split}
& \sup_{t\in [0,T]}\left(\| \nabla \theta (t) \|_{H^{s-1}}^2 +\| \theta (t) -\theta_0 \|_{L_2}^2  \right) + \int_0^T \|\nabla \theta (t)\|_{H^{s}}^2     \, dt \\
& \leq C_0 e^{T  K( M_1 + M_2+\sd_1)}  \left( \| \nabla \theta_0\|_{H^{s-1}}^2 +    \int_0^T \|g_{\theta} (t)\|_{H^{s-1}}^2 \, d t    \right)   .
\end{split}
\end{equation}
For $r=s-1$, we apply \eqref{EE linear theta-derivative} to  \eqref{magneto sys abstract linear difference}$_4$ and obtain
\begin{equation}
\label{EE linear-theta-difference-0}
\begin{split}
& \frac{d}{dt} \| \nabla \tilde{\theta} \|_{H^{s-2}}^2 + C \|\nabla \tilde{\theta} \|_{H^{s-1}}^2   \\
&  \leq  K( M_1 + M_2+\sd_1) \| \nabla \tilde{\theta} \|_{H^{s-2}}^2 + C \| \tilde{g}_{\theta} \|_{H^{s-2}}^2  + K(M_1+M_2+\sd_1)  \left( 1+    \| \nabla \theta_2\|_{H^s}^{2\gamma} \right) \|  \tilde{\sz}  \|_{H^{s-1}}^2  
\end{split}
\end{equation}
for some $\gamma \in \left( \frac{N}{4},1 \right)$, 
where we have employed \eqref{G-N ineq} and \eqref{W1 est} to attain the estimate
\begin{align*}
 \| \nabla \cdot ((\kappa (\vartheta_1) - \kappa (\vartheta_2))\nabla \theta_2 ) \|_{H^{s-2}}  
\leq K(M_1+M_2+\sd_1 ) \left( 1 +  \| \nabla \theta_2 \|_{H^{s}}^{\frac{N}{4}} + \| \nabla \theta_2 \|_{H^{s}}^{\gamma} \right) \|   \tilde{\sz}   \|_{H^{s-1}}.
\end{align*}

Indeed, when $s=3$, by Lemma~\ref{Lem: Nemyskii} and the mean value theorem, we have
\begin{align*}
\| \nabla \cdot ((\kappa (\vartheta_1) - \kappa (\vartheta_2))\nabla \theta_2 ) \|_{H^{s-2}} & \leq   \| (\kappa (\vartheta_1) - \kappa (\vartheta_2))\nabla \theta_2 ) \|_{H^{s-1}} \\
& \le C \| \kappa (\vartheta_1) - \kappa (\vartheta_2)\|_{H^{s-1}} \|\nabla \theta_2  \|_{H^{s-1}}\\
&\le C \left( \int_0^1 \| \kappa ( t\vartheta_1 + (1-t) \vartheta_2 ) \|_{H^{s-1}}   \, dt \right) \| \tilde{\vartheta}\|_{H^{s-1}} \|\nabla \theta_2  \|_{H^s}  \\
& \leq K(M_1+M_2+\sd_1 )\|\nabla \theta_2  \|_{H^{s-1}} \| \tilde{\sz}\|_{H^{s-1}}.
\end{align*}
When $s=2$, it follows from \eqref{G-N ineq} and \eqref{W1 est} that
\begin{align*}
&\| \nabla \cdot ((\kappa (\vartheta_1) - \kappa (\vartheta_2))\nabla \theta_2 ) \|_{H^{s-2}} \\
& \le  C \| \nabla  ((\kappa (\vartheta_1) - \kappa (\vartheta_2)) \|_{L_2} \|\nabla \theta_2 \|_{\infty} + \| \kappa (\vartheta_1) - \kappa (\vartheta_2)\|_{L_4} \| \nabla^2  \theta_2  \|_{L_4} \\
&\le  K(M_1+M_2+\sd_1 )  \| \tilde{\sz}\|_{H^{s-1}} \left( \| \nabla \theta_2 \|_{H^{s-1}}^{1-\frac{N}{4}}\| \nabla \theta_2 \|_{H^{s}}^{\frac{N}{4}} + \| \nabla \theta_2 \|_{H^{s-1}}^{1-\gamma}\| \nabla \theta_2 \|_{H^{s}}^{\gamma} \right)
\end{align*}
Multiplying both sides of  \eqref{magneto sys abstract linear difference}$_4$ by $\tilde{\theta}$ and integrating over $\bR^N$ gives
\begin{align*}
\frac{1}{2} \frac{d}{dt}  \| \tilde{\theta} \|_{L_2}^2 & \leq \|\tilde{g}_\theta \|_{L_2} \| \tilde{\theta}\|_{L_2} + \left| ( ((\kappa (\vartheta_1) - \kappa (\vartheta_2))\nabla \theta_2 | \nabla \tilde{\theta} ) \right| \\
&\leq \|\tilde{g}_\theta \|_{L_2} \| \tilde{\theta}\|_{L_2} + K(M_1+M_2+\sd_1) \|  \tilde{\theta} \|_{H^{s-1}} \| \tilde{\sz} \|_{H^{s-1}} \\
&\leq  \|  \tilde{\theta} \|_{H^{s-1}}^2 + C \|\tilde{g}_\theta \|_{L_2}^2 +   K(M_1+M_2+\sd_1 )   \| \tilde{\sz} \|_{H^{s-1}}^2.
\end{align*}
Together with \eqref{EE linear-theta-difference-0}, this implies  
\begin{equation*}
\begin{split}
& \frac{d}{dt} \|   \tilde{\theta} \|_{H^{s-1}}^2 + C \|\nabla \tilde{\theta} \|_{H^{s-1}}^2   \\
&  \leq  K( M_1 + M_2+\sd_1) \|   \tilde{\theta} \|_{H^{s-1}}^2 + C \| \tilde{g}_{\theta} \|_{H^{s-2}}^2  + K(M_1+M_2+\sd_1 )  \left( 1+    \| \nabla \theta_2\|_{H^s}^{2\gamma} \right) \|  \tilde{\sz} \|_{H^{s-1}}^2  .
\end{split}
\end{equation*}
Gronwall's inequality then gives
\begin{equation}
\label{EE linear-theta-difference}
\begin{split}
& \sup_{t\in [0,T]} \|   \tilde{\theta}(t) \|_{H^{s-1}}^2 + \int_0^T \|\nabla \tilde{\theta} (t) \|_{H^{s-1}}^2  \, dt \leq C_0 e^{T K( M_1 + M_2+\sd_1) } \\
&    \times \left(   \int_0^T \| \tilde{g}_{\theta} (t)\|_{H^{s-2}}^2\, dt + K( M_1 + M_2+\sd_1 ) (T+ T^{1-\gamma} M_3^{2\gamma})\sup_{t\in [0,T]}  
      \|  \tilde{\sz} (t)\|_{H^{s-1}}^2  \right)  .
\end{split}
\end{equation}

\subsubsection{Estimates of $u$-equation}\label{sec.local.u.est}

The same computations leading to \eqref{EE linear theta-derivative} yield
\begin{equation*}
\label{EE linear u-derivative}
\begin{split}
\frac{d}{dt} \| u \|_{H^r}^2 + C \|  u \|_{H^{r+1}}^2     \leq  K( M_1 + M_2+\sd_1) \| u\|_{H^r}^2 + C \| g_u \|_{H^{r-1}}^2  .
\end{split}
\end{equation*}
Then Gronwall's inequality implies
\begin{equation}
\label{EE linear-u}
\begin{split}
 \sup_{t\in [0,T]} \| u (t) \|_{H^s}^2 + \int_0^T \|  u (t)\|_{H^{s+1}}^2     \, dt  
  \leq C_0 e^{T  K( M_1 + M_2+\sd_1)}  \left( \| u_0\|_{H^s}^2 +    \int_0^T \|g_{u} (t)\|_{H^{s-1}}^2 \, d t    \right)   .
\end{split}
\end{equation}
Similarly, the estimate for $\tilde{u}$ can be derived as in \eqref{EE linear-theta-difference}:
\begin{equation}
\label{EE linear-u-difference}
\begin{split}
& \sup_{t\in [0,T]} \|   \tilde{u}(t) \|_{H^{s-1}}^2 + \int_0^T \|  \tilde{u} (t) \|_{H^{s}}^2  \, dt \leq C_0 e^{T K( M_1 + M_2+\sd_1) } \\
&    \times \left(   \int_0^T \| \tilde{g}_u (t)\|_{H^{s-2}}^2\, dt + K( M_1 + M_2+\sd_1 ) (T+ T^{1-\gamma} M_3^{2\gamma})\sup_{t\in [0,T]} 
 \|   \tilde{\sz}(t)\|_{H^{s-1}}^2  \right)  .
\end{split}
\end{equation}

\subsubsection{Estimates of $m$-equation}\label{sec.local.m.est}

Taking $\partial^\sigma  $ with $0<|\sigma | \leq r+1$ on both sides of \eqref{magneto sys abstract linear}$_5$, multiplying by $\partial^\sigma  m$, and then  integrating over $ \bR^N$ yields  
\begin{align*}
   \frac{d}{dt} \| \nabla  m \|_{H^r}^2
  = \sum_{0<|\sigma|\leq r+1}  \left[ (  \partial^\sigma   \partial_t m | \partial^\sigma   m  ) + (   \partial^\sigma m  | \partial^\sigma    \partial_t m)   \right]  =:   I+ II  .
\end{align*}
These two terms can be decomposed in a similar way:
\begin{align*}
I  
& = \sum_{0<|\sigma|\leq r+1 } (   \partial^\sigma    (\cA(\vartheta, h) \partial_{jj} m )| \partial^\sigma      m ) 
+    \sum_{0<|\sigma|\leq r+1 }   (   \partial^\sigma      g_m| \partial^\sigma      m )  \\
&= - \!\!   \sum_{0<|\sigma|\leq r+1 } \!\!  (         \cA(\vartheta, h) \partial^\sigma\partial_{j} m    | \partial^\sigma \partial_{j} m )  + \!\!  \sum_{0<|\sigma|\leq r+1}   \!\!  (  \cR^{\sigma}_{\cA} \partial_{jj} m | \partial^\sigma   m )  -  \!\!  \sum_{0<|\sigma|\leq r+1 } \!\! ( \   (\partial_j  \cA(\vartheta, h)) \partial^\sigma\partial_j    m  | \partial^\sigma   m )  \\
&\quad  +  \!\!  \sum_{0<|\sigma|\leq r+1 }  \!\!  (  \partial^\sigma    g_m| \partial^\sigma    m )  \\ 
& = : I.1 + I.2 + I.3 + I.4  .
\end{align*}
In virtue of Young's inequality  and the relation $\cA(\vartheta, h)+ \cA(\vartheta, h)^{\sT} =2\alpha   I_3 $,    we have
\begin{align*}
  I.1 + II.1 
 & = -\sum_{0<|\sigma|\leq r+1} \!\!  \left[(  \cA(\vartheta, h)  \partial^\sigma \partial_{j} m | \partial^\sigma \partial_{j}  m ) + (   \partial^\sigma \partial_{j} m | \cA(\vartheta, h)  \partial^\sigma \partial_{j} m ) \right]  \\
& =  -\sum_{0<|\sigma|\leq r+1}  \!\!  \left(  (\cA(\vartheta, h)+ \cA(\vartheta, h)^{\sT})  \partial^\sigma \partial_{j}  m \big| \partial^\sigma \partial_{j}  m \right) \\
& \leq  - 2 \underline{\alpha}   \| \Delta m \|_{H^{r}}^2 .
\end{align*}
Note that
$$
I.2 + II.2   = \sum_{\tau< \sigma } 
\begin{pmatrix}
\sigma \\
\tau 
\end{pmatrix}
\left(   ( \partial^{\sigma-\tau} \cA  )\partial^\tau \Delta m | \partial^{\sigma} m  \right).
$$
When $|\tau| = |\sigma|-1$,  \eqref{G-N ineq}, \eqref{est h L infty}, \eqref{est vartheta L infty}, Lemma~\ref{Lem: Nemyskii}, and Young's inequality imply
\begin{equation}
\begin{split}
\label{est-I}
\left(   (\partial^{\sigma-\tau} \cA )\partial^\tau \Delta m \big| \partial^{\sigma} m \right) & \leq  \|  \nabla \cA  \|_{L_4} \| \partial^\tau \Delta m  \|_{L_2} \left\| \partial^{\sigma }   m \right\|_{L_4}  \\
&\leq K ( M_1+M_2 + \sd_1)  \| \nabla m  \|_{H^r}^{1-\frac{N}{4}}  \| \Delta m\|_{H^{r}}^{1+\frac{N}{4}}   \\
&\leq \varepsilon \| \Delta m \|_{H^{r}}^2 + K ( M_1+M_2 + \sd_1) \| \nabla m \|_{H^{r }}^2 .
\end{split}
\end{equation}
When $\tau=0$, if $r=s$ 
\begin{align*}
   \left(  (\partial^{\sigma } \cA )  \Delta m \big| \partial^{\sigma} m \right)    
& \leq C \sum_{\nu \le \sigma}\left\| \partial^{\sigma }\alpha(\vartheta) -  \partial^{\sigma-\nu } \beta (\vartheta) \partial^\nu \sM(h)  \right\|_{L_2}   \| \Delta m \|_{L_4} \| \partial^\sigma   m \|_{L_4}  \\
&\leq  K ( M_1+M_2+\sd_1)  \left(  1+  \|\nabla \vartheta \|_{H^{s}} \right)     \| \nabla m\|_{H^{r}}^{2-\frac{N}{4}}  \| \Delta m\|_{H^{r}}^{   \frac{N}{4}}  \\
&\leq \varepsilon \| \Delta m \|_{H^{r}}^2 + K ( M_1+M_2+\sd_1) \left(  1+  \| \nabla \vartheta \|_{H^{s}}^{\frac{8}{8-N}}   \right) \|\nabla m\|_{H^{r}}^2 ;
\end{align*} 
if $r=s-1$, we have
\begin{align*}
   \left(  (\partial^{\sigma } \cA )  \Delta m \big| \partial^{\sigma} m \right)   
& \leq   C\sum_{\nu \le \sigma}\left\| \partial^{\sigma }\alpha(\vartheta) -  \partial^{\sigma-\nu } \beta (\vartheta) \partial^\nu \sM(h)  \right\|_{L_4} \| \Delta m \|_{L_2} \| \partial^\sigma  	 m \|_{L_4}  \\
&\leq  K ( M_1+M_2+\sd_1)  \left(  1+  \| \nabla \vartheta \|_{H^{s}}  \right)     \| \nabla m\|_{H^{r}}^{2-\frac{N}{4}}  \| \Delta m\|_{H^{r}}^{   \frac{N}{4}}  \\
&\leq \varepsilon \| \Delta m \|_{H^{r}}^2 + K ( M_1+M_2+\sd_1) \left(  1+  \| \nabla \vartheta \|_{H^{s}}^{\frac{8}{8-N}}    \right) \|\nabla m\|_{H^{r}}^2  .
\end{align*} 
For all other $\tau<\sigma$, a similar computation shows 
\begin{align*}
\left( (\partial^{\sigma-\tau} \cA )\partial^\tau \Delta m | \partial^{\sigma} m \right) & \leq \varepsilon \| \Delta m \|_{H^{r}}^2 + K ( M_1+M_2+\sd_1) \|\nabla m\|_{H^{r}}^2 .
\end{align*} 
Next, we apply \eqref{Sobolev embedding}, \eqref{G-N ineq}, and Lemma~\ref{Lem: Nemyskii} and Young's inequality to obtain
\begin{align*}
 I.3 + II.3  
&\leq C \sum_{0<|\sigma|\leq r+1}  \|\nabla \cA(\vartheta, h ) \|_{L_4}  \| \partial^\sigma \nabla m\|_{L_2}  \| \partial^\sigma   m\|_{L_4} \\
& \leq  K ( M_1+M_2+\sd_1)  \| \nabla m\|_{H^{r}}^{1- \frac{N}{4}}  \| \Delta m\|_{H^{r}}^{1+ \frac{N}{4}}  \\
& \leq  \varepsilon \| \Delta m \|_{H^{r}}^2 + K ( M_1+M_2+\sd_1) \|\nabla m\|_{H^{r}}^2 .
\end{align*}
In addition,     Young's inequality yields
\begin{align*}
I.4+II.4  \leq  \varepsilon \|\Delta m \|_{H^{r}}^2 + C(\varepsilon) \|g_m\|_{H^r}^2.
\end{align*}
To sum up, by choosing $\varepsilon>0$ sufficiently small, we end up with
\begin{equation}
\label{EE linear m-derivative}
\begin{split}
    \frac{d}{dt} \| \nabla m\|_{H^{r}}^2   + C \| \Delta m \|_{H^{r}}^2   
 \leq K ( M_1+M_2+\sd_1) \left(  1+ \| \nabla \vartheta \|_{H^{s}}^{\frac{8}{8-N}} \right) \| \nabla m \|_{H^{r}}^2      +   C \|g_m\|_{H^r}^2  .
\end{split}
\end{equation}
Note that $\frac{8}{8-N}<2$ for $N=2,3$. 
Multiplying both sides of \eqref{magneto sys abstract linear}$_5$ by $m-m_0$ and integrating over $\bR^N$ yields
\begin{align*}
\frac{1}{2}  \frac{d}{dt} \| m-m_0\|_{L_2}^2 
& \leq  \left( \| \alpha(\vartheta) \|_{\infty}  +  \| \beta(\vartheta) \|_{\infty}\| h\|_{\infty} \right)\| m-m_0\|_{L_2}  \| \Delta m \|_{L_2}  +\| m-m_0\|_{L_2} \|g_m\|_{L_2}  \\
& \leq   \| m-m_0\|_{L_2}^2 + \|g_m\|_{L_2}^2 + K(M_1 +M_2+\sd_1 ) \| \nabla m\|_{H^r}^2 .
\end{align*}
Combined with \eqref{EE linear m-derivative}, this implies  for $r=s$
\begin{equation}
\label{EE linear m}
\begin{split}
 &\quad   \frac{d}{dt} \left( \| \nabla m\|_{H^{s}}^2 +\| m-m_0\|_{L_2}^2  \right)  + C \| \Delta m \|_{H^{s}}^2     \\
& \leq K ( M_1 +M_2+\sd_1 ) \left(1 +   \| \nabla \vartheta \|_{H^{s}}^{\frac{8}{8-N}}  \right)\left( \| \nabla m \|_{H^s}^2  + \| m-m_0\|_{L_2}^2 \right) + C(\varepsilon) \|g_m\|_{H^s}^2.
\end{split}
\end{equation}
Applying Gronwall's inequality yields
\begin{equation}
\label{EE linear m combine}
\begin{split}
&\quad \sup_{t\in [0,T]}\left(\| \nabla m (t) \|_{H^{s}}^2 +\| m (t) -m_0\|_{L_2}^2 \right) + \int_0^T \| \Delta m (t) \|_{H^{s}}^2  \, dt \\
&\leq C_0 e^{   K(M_1+M_2+\sd_1) \left(T+   T^{\frac{4-N}{8-N}}  M_3^{\frac{8}{8-N}}  \right)}     \left( \|\nabla m_0\|_{H^{s}}^2    + \int_0^T \|g_m (t)\|_{H^s}^2\, dt \right).
\end{split}
\end{equation}
Next, taking $r=s-1$ in \eqref{EE linear m-derivative} and applying it to \eqref{magneto sys abstract linear difference}$_5$ yields
\begin{equation}
\label{EE m-difference-prelim}
\begin{split}
   \frac{d}{dt} \| \nabla \tilde{m}\|_{H^{s-1}}^2   + C \| \Delta \tilde{m} \|_{H^{s-1}}^2     
& \leq K (M_1 +M_2 +\sd_1) \left(1 +   \| \nabla \vartheta \|_{H^{s}}^{\frac{8}{8-N}}   \right)\| \nabla \tilde{m} \|_{H^{s-1}}^2      +   C \|\tilde{g}_m\|_{H^{s-1}}^2  \\
& \quad + K(M_1+M_2+\sd_1) (1 + \|\Delta m_2 \|_{H^s}^{2\gamma} ) \| \tilde{\sz}\|_{H^{s-1}}^2,
\end{split}
\end{equation}
where we have employed \eqref{G-N ineq} and \eqref{W1 est} to obtain the estimate
\begin{align*}
\| (\cA(\sz_1) - \cA(\sz_2)) \Delta m_2 \|_{H^{s-1}} \leq K(M_1+M_2+\sd_1) (1 + \|\Delta m_2 \|_{H^s}^{\gamma} ) \| \tilde{\sz}\|_{H^{s-1}} 
\end{align*}
for some $\gamma \in \left( \frac{N}{4} , 1\right)$.
Multiplying both sides of \eqref{magneto sys abstract linear difference}$_5$  by $\tilde{m}$ and integrating over $\bR^N$ yields
\begin{align*}
 \frac{1}{2} \partial_t \|\tilde{m}\|_{L_2}^2   
& \leq \|\tilde{g}_m\|_{L_2} \| \tilde{m}\|_{L_2}+ \left| (\partial_j \cA(\sz_1)\partial_j \tilde{m} | \tilde{m} ) \right| + \left| ( (\cA (\sz_1)-\cA (\sz_2)) \Delta m_2 | \tilde{m} ) \right|  \\
& \leq \|\tilde{g}_m\|_{L_2} \| \tilde{m}\|_{L_2}+ C  \| \partial_j \cA(\sz_1) \|_{L_4} \| \nabla \tilde{m}\|_{L_2} \| \tilde{m}\|_{L_4}\\
&\quad  +C  \left( \| \alpha(\vartheta_1)   -  \alpha (\vartheta_2) \|_{L_4} +\|  \beta(\vartheta_1) h_1 -   \beta(\vartheta_2) h_2 \|_{L_4}   \right)  \|\Delta m_2 \|_{L_2} \|\tilde{m}\|_{L_4} \\
&\leq K(M_1+M_2+\sd_1) \|\tilde{m}\|_{H^{s-1}}^2 + K(M_1+M_2+\sd_1) \| \tilde{\sz}\|_{H^{s-1}}^2 + \|\tilde{g}_m\|_{L_2}^2.
\end{align*}
Then we arrive at
\begin{equation}
\label{EE m-difference-prelim-0}
\begin{split}
   \frac{d}{dt} \|  \tilde{m}\|_{H^{s}}^2   + C \| \nabla \tilde{m} \|_{H^{s}}^2     
& \leq  K (M_1 +M_2 +\sd_1) \left(1 + \| \nabla \vartheta \|_{H^{s}}^{\frac{8}{8-N}}    \right)\|  \tilde{m} \|_{H^{s}}^2      +   C \|\tilde{g}_m\|_{H^{s-1}}^2  \\
& \quad + K(M_1+M_2+\sd_1) (1 + \|\Delta m_2 \|_{H^s}^{2\gamma} ) \| \tilde{\sz}\|_{H^{s-1}}^2  .
\end{split}
\end{equation}
It follows from   Gronwall's inequality that
\begin{equation}
\label{EE m-difference}
\begin{split}
&\sup_{t\in [0,T]} \|  \tilde{m}(t) \|_{H^{s}}^2 + \int_0^T \|\nabla \tilde{m} (t)\|_{H^{s}}^2 \, d t   \leq  C_0 e^{   K(M_1+M_2 +\sd_1) \left(T   + T^{\frac{4-N}{8-N}} M_3^{\frac{8}{8-N}}   \right) } \\
&\quad \times \left(   \int_0^T \|\tilde{g}_m (t) \|_{H^{s-1}}^2\, dt  +   K(M_1 + M_2 +\sd_1) \left(T   + T^{1-\gamma} M_3^{2\gamma}  \right) \sup_{t\in [0,T]}\| \tilde{\sz}\|_{H^{s-1}}^2  \right) .
\end{split}
\end{equation}

\subsubsection{Estimates for nonlinear terms}

To bound the solutions in the linear iteration argument, we will also need the following   energy estimates for the nonlinear terms on the right hand side of \eqref{magneto sys 3}.
\begin{proposition}\label{prop: nonlinear EE}
Given $z=(u, H, \theta, m)\in R_T(\sd_0,\sd_1,  M_1, M_2,M_3 )$, we have for all $\gamma\in \left( \frac{1}{2}, 1 \right)$
\begin{equation}
\label{nonlinear EE}
\begin{split}
\int_0^T \| G_u(z(t))\|_{H^{s-1}}^2 \, dt &\leq T K(M_1)  \\
\int_0^T \| G_H(z(t))\|_{H^s}^2 \, dt &\leq       K( M_1)  M_3^2 \\
\int_0^T \| G_\theta(z(t))\|_{H^{s-1}}^2 \, dt &\leq K(M_1+M_2+\sd_1) \left(T + T^{1-\gamma}  M_3^{2\gamma} \right) \\
\int_0^T \| G_m(z(t))\|_{H^{s}}^2 \, dt &\leq T K(M_1+M_2+\sd_1).
\end{split}
\end{equation}
\end{proposition}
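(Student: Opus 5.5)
The plan is to estimate each nonlinear term pointwise in time, using only the bounds \eqref{invariant estimmate} that define $R_T$, and then integrate over $(0,T)$. Throughout, the $L_\infty$ bounds come from \eqref{est h L infty} and \eqref{est vartheta L infty} applied to $(\theta,m)$, namely $\|\theta\|_\infty+\|m\|_\infty\le K(M_1+M_2+\sd_1)$. Products are handled with Lemma~\ref{Lem: Moser}, the algebra property of $H^s$, Lemma~\ref{lem: transport-est}, and Lemma~\ref{Lem: Nemyskii}; the last one controls $\nu(\theta)$ and $\alpha(\theta)$.

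\emph{The terms $G_u$, $G_H$, $G_m$.}
\begin{itemize}
\item For $G_u$, each piece is a divergence of a product, or a transport term. Hence
\[
\|G_u\|_{H^{s-1}}\le C\bigl(\|H\|_{H^s}^2+\|H\|_{H^s}+\|\nabla m\|_{H^s}^2+\|u\|_{H^s}^2\bigr)\le K(M_1),
\]
where Lemma~\ref{lem: transport-est} is used for $u\cdot\nabla u$. Integrating in time gives $TK(M_1)$.
\item For $G_H=(\nabla u)^{\sT}(H+I_N)$, the algebra property gives $\|G_H\|_{H^s}\le C(1+\|H\|_{H^s})\|u\|_{H^{s+1}}$. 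Squaring and integrating against the $M_3$ bound yields $K(M_1)M_3^2$.
\item For $G_m$, I use $\|\alpha(\theta)|\nabla m|^2m\|_{H^s}$. Lemma~\ref{Lem: Moser}(i) and Lemma~\ref{Lem: Nemyskii}(i) bound the high derivatives falling on $\alpha(\theta)$ or on $m$ through $\|\nabla\theta\|_{H^{s-1}}$ and $\|\nabla m\|_{H^{s-1}}$. At most $s$ derivatives hit $\alpha(\theta)$, so $\|\nabla\theta\|_{H^{s-1}}\le M_1$ suffices.
\item The term $u\cdot\nabla m$ in $\|\cdot\|_{H^s}$ is bounded by $C\|u\|_{H^s}\|\nabla m\|_{H^s}$. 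The zero-order part is handled by $\|u\|_{L_2}\|\nabla m\|_\infty$.
\item Altogether $\|G_m\|_{H^s}\le K(M_1+M_2+\sd_1)$, which gives the fourth bound.
\end{itemize}

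\emph{The term $G_\theta$, which is the main obstacle.} The term $u\cdot\nabla\theta$ is fine by Lemma~\ref{lem: transport-est}. The term $\nu(\theta)|\nabla u|^2$ in $H^{s-1}$ is bounded by $K(\cdot)\|\nabla u\|_{H^{s-1}}^2\le K$. The difficulty is the magnetic dissipation $\alpha(\theta)|\Delta m+|\nabla m|^2m|^2$ in $H^{s-1}$: it is quadratic in $\Delta m$, and $\Delta m$ is only in $H^{s-1}$ uniformly in time.

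Since $H^{s-1}$ need not be an algebra when $s=2$, the plan is to expand with the Leibniz rule. Every term then has the form $\partial^{\tau}\Delta m\,\partial^{\sigma-\tau}\Delta m$ with $|\sigma|\le s-1$, times coefficients that are bounded in $L_\infty$ or controlled by Lemma~\ref{Lem: Nemyskii}. One factor is placed in $L_\infty$ (or $L_4$) and the other in $L_2$ (or $L_4$). Using \eqref{W1 est} and \eqref{G-N ineq}, this gives
\[
\|\Delta m\|_{L_\infty}\le C\|\nabla m\|_{H^{s}}^{1-\gamma}\|\Delta m\|_{H^{s}}^{\gamma},
\]
and similar interpolations for the $L_4$ factors. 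The resulting pointwise bound is
\[
\|G_\theta\|_{H^{s-1}}\le K(M_1+M_2+\sd_1)\bigl(1+\|\Delta m\|_{H^s}^{\gamma}\bigr).
\]
Squaring, integrating, and applying H\"older in time gives
\[
\int_0^T\|\Delta m\|_{H^s}^{2\gamma}\,dt\le T^{1-\gamma}M_3^{2\gamma}.
\]
This produces the asserted bound $K(M_1+M_2+\sd_1)\bigl(T+T^{1-\gamma}M_3^{2\gamma}\bigr)$.

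The delicate point is to keep the exponent of $\|\Delta m\|_{H^s}$ strictly below $1$, so that the power of $T$ is positive. This works when exactly one factor is interpolated toward $H^{s+1}$ regularity for $m$, with the top-order factor $\partial^{s-1}\Delta m$ kept in $L_2$. I would check the case $s=2$, $N=3$ separately, since the Sobolev margins are tightest there.
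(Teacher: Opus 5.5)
Your overall route is the paper's. You bound each term pointwise in time using Lemma~\ref{Lem: Moser}, Lemma~\ref{Lem: Nemyskii}, the algebra property of $H^s$ and \eqref{transport-est}, and the factor $T^{1-\gamma}M_3^{2\gamma}$ comes from the interpolation \eqref{W1 est} followed by H\"older in time. Your treatment of $G_u$, $G_H$, $G_m$ and of the magnetic dissipation is fine.

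\textbf{The gap.} The problem is the sentence dismissing the viscous dissipation. The bound $\|\nu(\theta)|\nabla u|^2\|_{H^{s-1}}\le K\|\nabla u\|_{H^{s-1}}^2$ fails for $s=2$, which the statement covers. There $H^{s-1}=H^1$ is not an algebra for $N=2,3$, which is the very fact you invoke for the magnetic term. The derivative $\partial_j(\nu(\theta)|\nabla u|^2)$ contains $2\nu(\theta)\,\nabla u:\partial_j\nabla u$. Uniformly in time you only have $\nabla u\in H^1$, which does not embed into $L_\infty$, and $\nabla^2 u\in L_2$. So this product is not in $L_2$ in general; H\"older and Sobolev only place it in $L_{3/2}$ when $N=3$.

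In $R_T$ the field $\nabla u$ has exactly the same regularity as $\Delta m$: bounded in $H^{s-1}$, and square integrable in time with values in $H^s$. So the viscous term is just as delicate as the one you call the main obstacle. In fact it is the term the paper works out in detail (its $III.2.2$), while the magnetic term is declared analogous. As a result, your pointwise bound $\|G_\theta\|_{H^{s-1}}\le K(M_1+M_2+\sd_1)\bigl(1+\|\Delta m\|_{H^s}^{\gamma}\bigr)$ is wrong for $s=2$.

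\textbf{The repair.} Use the interpolation you already use for the magnetic term. By \eqref{W1 est},
\[
\|\nabla u\,\nabla^2u\|_{L_2}\le\|\nabla u\|_{L_\infty}\|\nabla^2 u\|_{L_2}\le C\|u\|_{H^s}^{2-\gamma}\|u\|_{H^{s+1}}^{\gamma}.
\]
The lower-order term $\nu'(\theta)\nabla\theta\,|\nabla u|^2$ is bounded in $L_2$ by placing all three factors in $L_6$ via \eqref{Sobolev embedding}. Hence
\[
\|G_\theta\|_{H^{s-1}}\le K(M_1+M_2+\sd_1)\bigl(1+\|u\|_{H^{s+1}}^{\gamma}+\|\Delta m\|_{H^s}^{\gamma}\bigr).
\]
H\"older in time gives $\int_0^T\|u\|_{H^{s+1}}^{2\gamma}\,dt\le T^{1-\gamma}M_3^{2\gamma}$, the same contribution as the magnetic term, so the stated estimate survives. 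For $s\ge 3$ your viscous bound is correct as written: then $H^{s-1}\hookrightarrow L_\infty$ and Lemma~\ref{Lem: Moser} applies, exactly as in the paper.
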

\begin{proof}
To obtain \eqref{nonlinear EE}$_1$, we observe that
\begin{align*}
& \quad \int_0^T \| G_u(z(t))\|_{H^{s-1}}^2 \, dt \\
&\leq C \int_0^T \| \nabla m (t) \odot \nabla m(t)\|_{H^s}^2 \, dt  + C \int_0^T \| H(t) H^{\sT}(t)\|_{H^s}^2 \, dt  + C \int_0^T \| u (t) \cdot \nabla u(t) \|_{H^{s-1}}^2 \, dt   + C T M_1^2  \\
&  =: I.1 +I.2+I.3 + CTM_1^2.
\end{align*}
Using the fact that $H^s(\bR^N)$ is a Banach algebra, we have
$$
I.1 \leq C \int_0^T \| \nabla m (t) \|_{H^s}^4\, dt \leq T  C M_1^4.
$$
A similar argument shows
$ 
I.2 \leq   T C M_1^4.
$ 
It follows from \eqref{transport-est} that
$$
I.3 \leq CT    M_1^4.
$$
In virtue of \eqref{L-M embedding}, we have
$$
\nabla u \in C ([0,T]; H^{s-1}) .
$$
\eqref{nonlinear EE}$_2$ follows from similar computations.    The estimate for \eqref{nonlinear EE}$_3$  
is a direct consequence of the   fact that $H^s(\bR^N)$ is a Banach algebra.

\eqref{nonlinear EE}$_4$  can be split into
\begin{align*}
\int_0^T \| G_m(z(t))\|_{H^{s}}^2 \, dt &\leq C \int_0^T \| u (t)\cdot \nabla m (t)\|_{H^s}^2 \, dt + \int_0^T \| (\alpha ( \theta )  | \nabla m|^2 m)(t) \|_{H^s}^2 \, dt =:II.1+II.2.
\end{align*}
These terms can be estimated as follows:
\begin{align*}
II.1 \leq C \int_0^T\| u (t)\|_{H^s}^2 \|\nabla m (t)\|_{H^s}^2 \, dt   \leq T  K(M_1),
\end{align*} 
and it follows from Lemmas~\ref{Lem: Moser} an \ref{Lem: Nemyskii} that
\begin{align*}
&\quad \| \alpha ( \theta)  | \nabla m|^2 m \|_{H^s}   =\sum_{|\sigma|\leq s } \| \partial^\sigma ( \alpha ( \theta)  | \nabla m|^2 m  ) \|_{L_2} \\
& \leq C \| \alpha ( \theta)  | \nabla m|^2  \|_{H^s}  \|m\|_{L_\infty} + \|\alpha ( \theta)  | \nabla m|^2\|_\infty \| \nabla m\|_{H^{s-1}} \\
&\quad +   \! \sum_{0<|\sigma|\leq s } \sum_{\tau<\sigma} 
\begin{pmatrix}
\sigma \\
\tau
\end{pmatrix}
\| \partial^\tau ( \alpha ( \theta)  | \nabla m|^2 ) \|_{L_4} \|\partial^{\sigma-\tau}  m   \|_{L_4} \\
&\leq K(M_1+M_2+\sd_1) \left( \| \alpha(\theta)\|_\infty \|  |\nabla m|^2 \|_{H^s} + \| \nabla^s \alpha(\theta)\|_{L_2} \|   |\nabla m|^2 \|_\infty  + \| \nabla m\|_{H^{s-1}} \right) \\
&\quad + C \| \alpha ( \theta)  | \nabla m|^2  \|_{H^s}  \|\nabla m\|_{H^{s-1}} \\ 	
&\leq K(M_1+M_2+\sd_1) ,
\end{align*}
which implies  $ II.2 \leq  TK(M_1+M_2+\sd_1)$.

Direct computations show
\begin{align*}
  \int_0^T \| G_\theta(z(t))\|_{H^{s-1}}^2 \, dt  
&\leq  C \int_0^T \| \alpha(\theta(t)) | \Delta m(t)+ |\nabla m(t)|^2 m(t)|^2\|_{H^{s-1}}^2 \, dt \\
&\quad +C \int_0^T \| \nu(\theta(t)) | \nabla u(t)|^2\|_{H^{s-1}}^2 \, dt + C \int_0^T \| u (t) \cdot \nabla \theta(t) \|_{H^{s-1}}^2 \, dt\\
&  =: III.1 +III.2+III.3.
\end{align*}
By means of a similar computation applied to $I.3$, we get
\begin{align*}
III.3\leq T  C M_1^4. 
\end{align*}
$III.2$ can be estimated in a similar way to $II.2$. 
When $s\geq 3$, Lemma~\ref{Lem: Moser} implies  
\begin{align*}
III.2\leq  C ( \| \nu(\theta)\|_\infty \| \nabla u\|_{H^{s-1}}^2 +\| \nabla u\|_{\infty}^2 \|\nabla^{s-1} \nu (\theta)\|_{L_2} ) \leq   T K(M_1+M_2+\sd_1).
\end{align*}
When $s=2$, 
\begin{align*}
III.2 &= C \int_0^T \| \nu(\theta(t)) | \nabla u(t)|^2\|_{L_2}^2 + C \int_0^T \| \nabla (\nu(\theta(t)) | \nabla u(t)|^2)\|_{L_2}^2 =III.2.1+III.2.2.
\end{align*}
\eqref{Sobolev embedding}   implies  
\begin{align*}
III.2.1\leq T C \sup\limits_{t\in [0,T]} \| \nu (\theta (t))\|_{L_\infty}^2 \| \nabla u(t)\|_{L_4}^4 \leq T K(M_1 + M_2+\sd_1).
\end{align*}
On the other hand, employing \eqref{Sobolev embedding} and \eqref{W1 est} yields
\begin{align*}
III.2.2 &\leq T C \sup\limits_{t\in [0,T]} \| \nu'(\theta (t)) \nabla \theta (t) \|_{L_6}^2 \| \nabla u (t) \|_{L_6}^4 + C   \int_0^T \| \nu (\theta (t))\|_{L_\infty}^2 \| \nabla u (t)\|_{L_\infty}^2 \| \nabla^2 u (t)\|_{L_2}^2 \, dt \\
&\leq T K(M_1+M_2+\sd_1) \left( 1+  \int_0^T \| u(t) \|_{H^s}^{4- 2\gamma}  \| u(t) \|_{H^{s+1}}^{ 2\gamma} \, dt \right) \\
&\leq K(M_1+M_2+\sd_1) (T + T^{1-\gamma}   M_3^{2\gamma}).
\end{align*}
The estimate for $III.1$ follows in a similar manner.

\end{proof} 

\begin{corollary}\label{Cor:difference 2}
Given any  $z_1,z_2\in R_T(\sd_0,\sd_1,  M_1, M_2,M_3 )$, the following estimates   hold:
\begin{equation}
\label{nonlinear EE-difference}
\begin{split}
\int_0^T \| G_u(z_1(t)) - G_u(z_2(t))\|_{H^{s-2}}^2 \, dt &\leq  T K(M_1) \sup_{t\in [0,T]}  \| \tilde{z}(t)\|_{\widetilde{X}^{s-1}}^2    \\
\int_0^T \| G_H(z_1(t)) - G_H(z_2(t))\|_{H^{s-1}}^2 \, dt &\leq    K(M_3)  \sup_{t\in [0,T]}  \|   \tilde{H} (t)\|_{H^{s-1}}^2 + K(M_1) \int_0^T \| \tilde{v}(t)\|_{H^s}^2 \, dt      \\
\int_0^T \| G_\theta(z_1(t)) -  G_\theta(z_2(t))\|_{H^{s-2}}^2 \, dt &\leq K(M_1+M_2+\sd_1)  \left(T + T^{1-\gamma}   M_3^{2\gamma}  \right) \sup_{t\in [0,T]}   \| \tilde{z} (t)\|_{X^{s-1}}^2  \\
\int_0^T \| G_m(z_1(t)) - G_m(z_2(t))\|_{H^{s-1}}^2 \, dt &\leq  T K(M_1 +M_2+\sd_1) \sup_{t\in [0,T]}  \| \tilde{z}(t)\|_{\widetilde{X}^{s-1}}^2  ,
\end{split}
\end{equation}
for some $\gamma \in \left( \frac{1}{2}, 1 \right)$, where $\tilde{f} =f_1-f_2$ with $f\in \{ z, u,H,\theta ,m \}$.
\end{corollary}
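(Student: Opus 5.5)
The plan is to prove each of the four difference estimates by writing $G_\ast(z_1)-G_\ast(z_2)$ as a sum of terms that are bilinear (or multilinear) in the unknowns. In each term exactly one factor is a difference $\tilde f = f_1-f_2$, and the remaining factors belong to $z_1$ or $z_2$. For example,
\[
\nabla m_1\odot\nabla m_1-\nabla m_2\odot\nabla m_2=\nabla\tilde m\odot\nabla m_1+\nabla m_2\odot\nabla\tilde m,
\qquad
H_1H_1^{\sT}-H_2H_2^{\sT}=\tilde H H_1^{\sT}+H_2\tilde H^{\sT}.
\]
Similarly, $u_1\cdot\nabla u_1-u_2\cdot\nabla u_2=\tilde u\cdot\nabla u_1+u_2\cdot\nabla\tilde u$. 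For coefficient differences we use the mean value representation
\[
\mu(\theta_1)-\mu(\theta_2)=\tilde\theta\int_0^1\mu'(t\theta_1+(1-t)\theta_2)\,dt,
\]
as already done for $\kappa$ in the $\theta$-difference estimate. The non-difference factors are controlled at the level $H^s$ by $M_1$, their $L_\infty$ norms by \eqref{est h L infty} and \eqref{est vartheta L infty}, and their time-integrated higher norms by $M_3$. The difference factor is measured one derivative lower, in $\widetilde X^{s-1}$ or $X^{s-1}$.

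\textbf{Step 1: $G_u$ and $G_m$.} For $G_u$ in $H^{s-2}$, the terms $\nabla\cdot(\tilde HH_1^{\sT})$, $\nabla\cdot\tilde H$ and $\nabla\cdot(\nabla\tilde m\odot\nabla m_1)$ are bounded by $\|\tilde H H_1^{\sT}\|_{H^{s-1}}$ and similar quantities. When $s\ge3$ we use the algebra property of $H^{s-1}$. When $s=2$ we use the $L_4$–$L_4$ and $L_\infty$–$L_2$ splittings of \eqref{Sobolev embedding}, exactly as in Lemma~\ref{lem: transport-est}. The convective term is handled by \eqref{transport-est}. Each bound is then pointwise in time, of the form $K(M_1)\|\tilde z\|_{\widetilde X^{s-1}}$, and integrating gives the factor $T$.

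For $G_m$ in $H^{s-1}$ the structure is the same, using Lemma~\ref{Lem: Moser} and Lemma~\ref{Lem: Nemyskii}, together with the mean value formula for $\alpha(\theta_1)-\alpha(\theta_2)$. The $L_\infty$ bound on $\tilde\theta$ needed there comes from $\|\tilde\theta\|_{H^{s-1}}$ when $s\ge3$. When $s=2$ we instead put $\tilde\theta$ in $L_4$ and the other factors in $L_4$ or $L_\infty$. Note that $\|\nabla\tilde m\|_{H^s}$ is part of $\widetilde X^{s-1}$, since its last slot is $H^{s}$ for $\tilde m$.

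\textbf{Step 2: $G_H$.} Here $G_H(z_1)-G_H(z_2)=(\nabla u_1)^{\sT}\tilde H+(\nabla\tilde u)^{\sT}H_2+(\nabla\tilde u)^{\sT}$. The first term is bounded in $H^{s-1}$ by $C\|u_1\|_{H^{s+1}}\|\tilde H\|_{H^{s-1}}$, using \eqref{Sobolev embedding} for the $\|\nabla u_1\|_\infty$ part when $s=2$. Squaring and integrating in time, $\int_0^T\|u_1\|_{H^{s+1}}^2\,dt\le M_3^2$ yields the term $K(M_3)\sup_t\|\tilde H\|_{H^{s-1}}^2$. The last two terms are bounded by $(1+K(M_1))\|\tilde u\|_{H^s}$ via \eqref{transport-est}, which gives the term $K(M_1)\int_0^T\|\tilde v\|_{H^s}^2\,dt$.

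\textbf{Step 3: $G_\theta$, the main obstacle.} This term carries the quadratic dissipations $\nu(\theta)|\nabla u|^2$ and $\alpha(\theta)|\Delta m+|\nabla m|^2m|^2$. Their differences contain factors such as $\nabla u_i$ and $\Delta m_i$, which are only in $L_2(0,T;H^{s})$, multiplied against a difference measured in $H^{s-1}$. We write, for instance,
\[
\nu(\theta_1)|\nabla u_1|^2-\nu(\theta_2)|\nabla u_2|^2=(\nu(\theta_1)-\nu(\theta_2))|\nabla u_1|^2+\nu(\theta_2)\nabla\tilde u:(\nabla u_1+\nabla u_2).
\]
We estimate each term in $H^{s-2}$ by placing the difference factor in $L_2$ or $L_4$. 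The high-order factor is placed in $L_\infty$ or $L_4$ and interpolated via \eqref{W1 est} or \eqref{G-N ineq}, giving bounds like $K(M_1+M_2+\sd_1)\|u_i\|_{H^s}^{1-\gamma}\|u_i\|_{H^{s+1}}^{\gamma}\|\tilde z\|_{X^{s-1}}$, and analogously with $\Delta m_i$ in $H^s$. Squaring, integrating in time and applying Hölder with exponents $1/\gamma$ and $1/(1-\gamma)$ yields $K\,(T+T^{1-\gamma}M_3^{2\gamma})\sup_t\|\tilde z\|_{X^{s-1}}^2$, as in $III.2.2$ of Proposition~\ref{prop: nonlinear EE}.

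The delicate cases are $s=2$, where $H^{s-2}=L_2$, and the magnetic term. Expanding $|a_1|^2-|a_2|^2=(a_1-a_2)\cdot(a_1+a_2)$ with $a_i=\Delta m_i+|\nabla m_i|^2m_i$ requires $\tilde a$ in $L_4$. This follows from $\nabla\tilde m\in H^{s}$, and $\tilde m$ is controlled in $H^{s}$ as part of the difference norm. The convective term $u\cdot\nabla\theta$ is treated as in Step 1.
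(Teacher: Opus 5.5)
Your overall strategy (multilinear splitting with one difference factor per term, the mean-value formula for the coefficients, \eqref{W1 est}-interpolation of the high-order factor, and H\"older in time) is the same as the paper's. Steps 1 and 2 go through. The gap is in the magnetic dissipation term of Step 3, which is exactly the term the paper singles out.

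You miscount the regularity that the difference norm gives on $\tilde m$. The space $\widetilde X^{s-1}=H^{s-1}_\sigma\times H^{s-1}\times H^{s-1}\times H^{s}$ controls $\tilde m$ in $H^s$, i.e.\ $\nabla\tilde m$ only in $H^{s-1}$. Likewise $X^{s-1}$ controls $\nabla\tilde m$ only in $H^{s-1}$. Neither controls $\nabla\tilde m$ in $H^s$, which is what you assert in Step 1 and then use in Step 3. For $s=2$ this means $\Delta\tilde m$ is only in $L_2$. Hence $\tilde a=\Delta\tilde m+\bigl(|\nabla m_1|^2m_1-|\nabla m_2|^2m_2\bigr)$ is not in $L_4$, and the $L_4$--$L_4$ splitting of $\tilde a\cdot(a_1+a_2)$ cannot be closed with $\sup_t\|\tilde z\|_{X^{s-1}}$ on the right-hand side. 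In Step 1 the slip is harmless, since $G_m$ in $H^{s-1}$ only needs $\nabla\tilde m\in H^{s-1}$.

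The fix is the recipe you yourself state earlier in Step 3: put the whole difference in $L_2$ and the high-order factor in $L_\infty$. Concretely,
\[
\|\alpha(\theta_2)\,\tilde a\cdot(a_1+a_2)\|_{L_2}\le K\,\|\tilde a\|_{L_2}\,\|a_1+a_2\|_{\infty},
\]
with
\[
\|\Delta m_i\|_\infty\le C\|\Delta m_i\|_{H^1}^{1-\gamma}\|\Delta m_i\|_{H^2}^{\gamma}\le K(M_1)\|\Delta m_i\|_{H^s}^{\gamma}
\]
by \eqref{W1 est}. H\"older in time then gives $\int_0^T\|\Delta m_i\|_{H^s}^{2\gamma}\,dt\le T^{1-\gamma}M_3^{2\gamma}$. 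The coefficient part $(\alpha(\theta_1)-\alpha(\theta_2))|a_1|^2$ is handled with $\tilde\theta\in L_6$ and $a_1\in L_6$.

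This is precisely the paper's computation, and it is why the factor $T^{1-\gamma}M_3^{2\gamma}$ appears in the third estimate. The extra derivative you wanted on $\tilde m$ is available only in time-integrated form, through $\int_0^T\|\nabla\tilde m\|_{H^s}^2\,dt$ in \eqref{EE m-difference}, not through $\sup_t$. Using it would produce a bound of a different form, with no small prefactor, rather than the one stated.
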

\begin{proof}
Direct computations show
\begin{align*}
\| G_H(z_1 ) - G_H(z_2)\|_{H^{s-1}}      \leq  C \left( \| \tilde{v}\|_{H^s} +  \|H_1\|_{H^s} \| \tilde{v}\|_{H^s} + \| v_2\|_{H^{s+1}} \| \tilde{H}\|_{H^{s-1}}  \right),
\end{align*}
which implies \eqref{nonlinear EE-difference}$_2$.

To estimate \eqref{nonlinear EE-difference}$_1$, \eqref{nonlinear EE-difference}$_3$ and \eqref{nonlinear EE-difference}$_4$, 
we will only study the term involving $\alpha(\theta) | \Delta m |^2 $, as the estimates for the remaining terms in these inequalities can be established in a similar fashion.
When $s=2$,  in virtue of \eqref{W1 est}, we have
\begin{align*}
& \quad \int_0^T \| \alpha(\theta_1(t)) | \Delta m_1 (t) |^2 - \alpha(\theta_2 (t)) | \Delta m_2 (t) |^2\|_{H^{s-2}}^2 \, dt\\
&\leq C \int_0^T \left(  \| \alpha(\theta_1(t))  - \alpha (\theta_2(t)) \|_{L_6}^2 \| \Delta m_1 (t)\|_{L_6}^4  + \| \alpha(\theta_2(t))\|_{\infty}^2 \| \Delta \tilde{m}(t)\|_{L_2}^2 \| \Delta(m_1+m_2)(t)\|_{\infty}^2 \right)\, dt \\
& \leq   K(M_1+M_2+\sd_1)\| (  \tilde{\theta }, \nabla\tilde{m} )(t)\|_{H^{s-1}}^2  \left(T +  \int_0^T    \|\Delta (m_1+m_2)(t) \|_{H^{s}}^{ 2\gamma} \, dt  \right)\\
&\leq K(M_1+M_2+\sd_1) (T+T^{1-\gamma} M_3^{2\gamma} ) \| (  \tilde{\theta }, \nabla\tilde{m} )(t)\|_{H^{s-1}}^2  
\end{align*} 
for some $\gamma \in \left( \frac{1}{2}, 1 \right)$.
The case $s=3$ can be estimated in the same way. When $s\geq 4$, the fact that $H^{s-2}$ is a Banach algebra yields a similar estimate.
\end{proof}

\subsection{Existence}
\subsubsection{Approximating Sequence} 
Assume that 
$$
z_0 =(u_0,H_0, \theta_0 , m_0 )\in X^s  \qquad \text{such that}\quad  |m_0|\equiv 1 , \quad 0<\sd_0 \leq \theta_0 \leq \sd_1
$$ 
for some constants $\sd_0$ and $\sd_1$. We define
$$
z_1= (u_1,H_1,\theta_1, m_1) = \left( e^{t \PH \Delta }u_0, H_0, e^{t\Delta} \theta_0 , e^{t \Delta} m_0  \right),
$$
where we set  $H_1(t)=H_0$ for $t\in [0,T]$.
Here $\PH$ is the Helmholtz projection and $H_1$ is understood as the canonical extension of $H_0$ onto $[0,T]$. 

Given any $T>0$, the $L_p$-maximal regularity property of the Stokes operator, cf. \cite[Chapter 7]{PruSim16}, implies $u_1\in \bE^s_\sigma (J_T)$.
Moreover,
\begin{equation}
\label{heat-sg}
\nabla \theta_1 \in \bE^{s-1}(J_T)
\quad \text{and} \quad
\partial_t \theta_1 \in L_2((0,T);H^{s-1}).
\end{equation}
See \eqref{semigroup}.
The fact that $\sd_0 \le \theta_1 \le \sd_1$ follows from the expression
\[
\theta_1 (t) = \frac{1}{(4\pi t)^{N/2}} \int_{\bR^N} e^{- \frac{|x-y|^2}{4t}} \theta_0(y)\, dy .
\]
Similar arguments apply to $m_1$.
To sum up, $z_1\in R_T(\sd_0,\sd_1, \widetilde{M}_1,\widetilde{M}_2,\widetilde{M}_3)$ for some   $(\widetilde{M}_1,\widetilde{M}_2,\widetilde{M}_3)$, which is non-decreasing in $T$ and depends only on $\|(u_0, H_0, \nabla m_0)\|_{H^s}+ \|\nabla \theta_0\|_{H^{s-1}} $.  

Given $T>0$, define iteratively the solution to 
\begin{equation}
\label{magneto sys abstract-iterative}
\left\{\begin{aligned}
\partial_t u_n  -  \nabla \cdot (\nu (\theta_{n-1})   \nabla u_n  )   +\nabla \pi_n 
&= G_u (z_{n-1}) ,\\
\nabla \cdot u_n &=0 ,\\
 \partial_t H_n + u_{n-1} \cdot\nabla H_n   &=  G_H (z_{n-1})      ,\\
\partial_t \theta_n -  \nabla \cdot (\kappa (\theta_{n-1}) \nabla \theta_n)    &=  G_\theta (z_{n-1})    ,\\
\partial_t m_n -   \cA(z_{n-1} ) \Delta m_n    &= G_m(z_{n-1}) ,\\
z_n(0)& =z_0 , 
\end{aligned}\right. 
\end{equation}
by $z_n=(u_n, H_n, \theta_n, m_n)$, for $n\geq 2$.
The existence of a   solution $z_n$ to \eqref{magneto sys abstract-iterative} on   $[0,T]$ is asserted by Proposition~\ref{prop: existence linear}.
Let
\begin{equation}
\label{M-values}
M_1=M_2= M_3=  \max \left\{ 2 \sqrt{C_0} ( \|(u_0, H_0, \nabla m_0)\|_{H^s} + \|\nabla \theta_0\|_{H^{s-1}})  +1) , \widetilde{M}_1,\widetilde{M}_2,\widetilde{M}_3 \right\},
\end{equation}
where $C_0$ is the constant appearing in Section~\ref{sec.local.est}.
We will show that    $ z_n\in R_T( \sd_0,\sd_1,  M_1, M_2,M_3 )$ for all $n$ as long as $T>0$ is sufficiently small.

Assume that $ z_{n-1}\in R_T( \sd_0,\sd_1,  M_1, M_2,M_3 )$ for some $T>0$. We will update $(T,M_2) $ in the following inductive analysis if necessary.
In virtue of \eqref{EE linear-H}, \eqref{EE linear-theta}, \eqref{EE linear-u}, \eqref{EE linear m combine} and Proposition~\ref{prop: nonlinear EE}, we have the following estimates for $z_n$:
\begin{equation}
\label{EE-iterative}
\begin{split}
& \sup_{t\in [0,T]} \| u_n (t) \|_{H^s}^2 + \int_0^T \|  u_n (t)\|_{H^{s+1}}^2     \, dt  
  \leq C_0 e^{T  K( M_1 + M_2+\sd_1)}  \left( \| u_0\|_{H^s}^2 +   TK(M_1)  \right) \\
& \sup_{t\in [0,T]} \|H_n(t) \|_{H^s}^2    \leq   e^{ \sqrt{T}  C_0 M_3 } \left( \|H_0\|_{H^s}^2 +   T K(M_1)M_3^2  \right)     \\ 
& \sup_{t\in [0,T]}\left(\| \nabla \theta_n (t) \|_{H^{s-1}}^2 +\| \theta_n (t) -\theta_0 \|_{L_2}^2  \right) + \int_0^T \|\nabla \theta_n (t)\|_{H^{s}}^2     \, dt \\
& \leq C_0 e^{T  K( M_1 + M_2+\sd_1)}  \left( \| \nabla \theta_0\|_{H^{s-1}}^2 +      K( M_1 + M_2+\sd_1)(T +T^{1-\gamma} M_3^{2\gamma} )  \right) \\
&  \sup_{t\in [0,T]}\left(\| \nabla m_n(t) \|_{H^{s}}^2 +\| m_n (t) -m_0\|_{L_2}^2 \right) + \int_0^T \| \Delta m_n (t) \|_{H^{s}}^2  \, dt \\
&\leq C_0 e^{   K(M_1+M_2+\sd_1) \left(T+   T^{\frac{4-N}{8-N}}  M_3^{\frac{8}{8-N}}  \right)}     \left( \|\nabla m_0\|_{H^{s}}^2    +  TK(M_1 +M_2+\sd_1)  \right) .
\end{split}
\end{equation}
Choose $T>0$ sufficiently small so that  the right hand side of \eqref{EE-iterative} is dominated by $M_1^2$, see \eqref{M-values}.
Therefore, 
\begin{equation}\label{z_n_bd}
 z_n\in R_T( \sd_0,\sd_1, M_1, M_2,M_3 ) \quad \text{for all} \quad  n \in \bN .
\end{equation}
 \eqref{est h L infty}  and \eqref{est vartheta L infty}  further imply that there exists some uniform  bound $M_4$ such that
\begin{equation*}
\| (\theta_n, m_n) \|_\infty \leq M_4.
\end{equation*}
Furthermore, using \eqref{magneto sys abstract-iterative}, one can find  some uniform  bound $M_5$ such that
\begin{equation}\label{z_n_temporal_der_bd}
\| \partial_t z_n \|_{L_2(H^{s-1})\times C(H^{s-1})\times L_2(H^{s-1})\times L_2(H^s)} \leq M_5. 
\end{equation}

\subsubsection{Convergence} 
For $n\ge 2$, let $\tilde{f}_n =f_n-f_{n-1}$ with $f\in \{ z, u, H, \theta, m \}$. 
Based on \eqref{magneto sys abstract-iterative}, we can apply \eqref{EE linear-H-difference}, \eqref{EE linear-theta-difference}, \eqref{EE linear-u-difference}, \eqref{EE m-difference}, and Corollary~\ref{Cor:difference 2} to obtain
\begin{equation}
\label{EE-Cauchy}
\begin{split}
& \sup_{t\in [0,T]} \|   \tilde{u}_n(t) \|_{H^{s-1}}^2 + \int_0^T \|  \tilde{u}_n (t) \|_{H^{s}}^2  \, dt  \\
&   \leq K( M_1 + M_2+\sd_1 ) e^{T K( M_1 + M_2+\sd_1) }    (T+ T^{1-\gamma} M_3^{2\gamma})\sup_{t\in [0,T]}  \| \tilde{z}_{n-1}(t)\|_{X^{s-1}}^2   \\ 
&\sup_{t\in [0,T]}\|\tilde{H}_n(t) \|_{H^{s-1}}^2     \leq   e^{ \sqrt{T}  C_0 M_3 } \left(     T  K(M_3)  \sup_{t\in [0,T]}  \|   \tilde{H}_{n-1} (t)\|_{H^{s-1}}^2 +    T K(M_1) \int_0^T \| \tilde{u}_{n-1}(t)\|_{H^s}^2 \, dt  \right)      \\ 
& \sup_{t\in [0,T]} \|   \tilde{\theta}_n(t) \|_{H^{s-1}}^2 + \int_0^T \|\nabla \tilde{\theta}_n (t) \|_{H^{s-1}}^2  \, dt\\
&    \leq K( M_1 + M_2+\sd_1 ) e^{T K( M_1 + M_2+\sd_1) }    (T+ T^{1-\gamma} M_3^{2\gamma})\sup_{t\in [0,T]}  \| \tilde{z}_{n-1}(t)\|_{X^{s-1}}^2    \\
&\sup_{t\in [0,T]} \|  \tilde{m}_n(t) \|_{H^{s}}^2 + \int_0^T \|\nabla \tilde{m}_n (t)\|_{H^{s}}^2 \, d t   \\
& \leq   K(M_1 + M_2 +\sd_1) e^{   K(M_1+M_2 +\sd_1) \left(T   + T^{\frac{4-N}{8-N}} M_3^{\frac{8}{8-N}}   \right) } \left(T   + T^{1-\gamma} M_3^{2\gamma}  \right) \sup_{t\in [0,T]}\| \tilde{z}_{n-1}\|_{X^{s-1}}^2    .
\end{split}
\end{equation}
For $n \ge 2$, define 
\begin{align*}
a_n= \sup_{t\in [0,T]} \|\tilde{z}_n(t)\|_{\tilde{X}^{s-1}}^2 + \int_0^T \left( \|\tilde{u}_n(t)\|_{H^{s }}^2 +\|\nabla \tilde{\theta}_n(t)\|_{H^{s-1 }}^2 +\|\Delta \tilde{m}_n(t)\|_{H^{s-1 }}^2 \right) \, d t .
\end{align*}
Choosing $T>0$ sufficiently small in \eqref{EE-Cauchy}, we can ensure that
\begin{align*}
a_n  \leq   \frac{1}{2}a_{n-1} .
\end{align*}
This implies that 
\begin{equation}\label{Cauchy_sq}
\begin{split}
\{(u_n,H_n, \theta_n-\theta_1, m_n-m_1 )\}_n  \quad &\text{is Cauchy in }\quad L_\infty((0,T); \widetilde{X}^{s-1} )  \\
\{(u_n, \nabla \theta_n, \Delta m_n)\}_n  \quad &\text{is Cauchy in }\quad L_2((0,T); H^s\times H^{s-1} \times H^{s-1}).
\end{split}
\end{equation}
Furthermore, it follows from \eqref{magneto sys abstract linear difference}, Corollary~\ref{Cor:difference 2}, and the discussion leading to \eqref{EE linear-H-difference}, \eqref{EE linear-theta-difference}, \eqref{EE linear-u-difference}, and \eqref{EE m-difference}    that
\begin{align*}
\{\partial_t H_n \}_n  \quad &\text{is Cauchy in }\quad L_\infty((0,T); H^{s-2} ) \\
\{\partial_t(u_n,   \theta_n,   m_n)\}_n  \quad &\text{is Cauchy in }\quad L_2((0,T); H^{s-2}\times H^{s-2} \times H^{s-1}).
\end{align*}
Consequently, there exists some $ z=(u,H,\theta, m )$ such that
\begin{equation}
\label{converg 1}
\begin{split}
(u_n,H_n,   \theta_n -   \theta_1 ,   m_n -  m_1 )\to (u,H,  \theta- \theta_1,  m -m_1 )  \quad & \text{in} \quad   \bE^{s-1}_\sigma (J_T) \times \bC^r(J_T ) \times \bE^{s-1} (J_T)\times \bE^s (J_T) \\
(u_n,    \theta_n -   \theta_1 ,   m_n -  m_1 )\to (u,  \theta- \theta_1,  m -m_1 )  \quad  &\text{in}\quad   C(J_T; H^r_\sigma) \times C(J_T; H^r) \times C(J_T; H^{r+1}) \\
(\theta_n , m_n)  \overset{* }{\rightharpoonup} (\theta , m)   \quad  &\text{in}\quad L_\infty ((0,T)\times \bR^N) \\
(u_n,  \theta_n -   \theta_1 ,   m_n -  m_1 )\rightharpoonup (u,  \theta- \theta_1,  m -m_1 )  \quad  &\text{in} \quad   \bE^s_\sigma (J_T) \times \bE^s (J_T) \times  \bE^{s+1} (J_T)  
\end{split}
\end{equation}
for  each $r\in \bR_+$ with  $r< s$.
The  convergence of $H_n\to H$ in $\bC^r(J_T ) $ stated in \eqref{converg 1}$_1$ is a consequence of the boundedness of $\{H_n\}_n$ in $\bC^s(J_T ) $ and the convergence of $H_n\to H$ in $\bC^{s-1}(J_T ) $.

The assertions in \eqref{converg 1}$_2$ follow from $\bE^{s-1} (J_T) \hookrightarrow C(J_T; H^{s-1})$ by analogous arguments.

In addition, \eqref{converg 1}$_3$ follows from the boundedness of 
$(   \theta_n -   \theta_1 ,   m_n -  m_1 )$ in $L_\infty((0,T);   H^s \times H^{s+1})$,
which is a consequence of \eqref{Cauchy_sq}$_1$,   \eqref{Sobolev embedding}, and the fact that $(\theta_1,m_1)\in L_\infty ((0,T)\times \bR^N)$.
 
Finally, \eqref{z_n_bd}, \eqref{z_n_temporal_der_bd}, and \eqref{converg 1}$_1$ imply that 
\[
\{(u_n,  \theta_n -   \theta_1 ,   m_n -  m_1 )\}_n \quad \text{ is bounded in }\quad
\bE^s_\sigma (J_T) \times \bE^s (J_T) \times  \bE^{s+1} (J_T)  .
\]
Then \eqref{converg 1}$_4$ follows.

Letting $n\to \infty$, one can show that  $z$ solves \eqref{magneto sys 3}. 
Here we have used the fact that $(\theta_n -   \theta_1 ,   m_n -  m_1 ) \in \bE^s (J_T) \times  \bE^{s+1} (J_T) $.

The fact that $|m(t)|\equiv 1$ and $\theta(t) \geq \sd_0$ can be proved in a similar manner to \cite{DSS23, DSS2302}.
Since $H\in L_\infty((0,T); H^s)$, the regularity property $H\in \bC^s(J_T)$ follows from the standard theory of transport equations, cf. \cite[Theorem~3.19]{BahouriCheminDanchinBook}.

The above analysis shows that $T_0>0$ only depends on   $\|(u_0, H_0,  \nabla m_0)\|_{H^s} + \| \nabla \theta_0\|_{H^{s-1}} + \| \theta_0\|_\infty$.
Moreover,  $\tilde{z}_0=(\tilde{u},\tilde{H}_0, \tilde{\theta}_0,\tilde{m}_0):=z(T_0)$ satisfies 
\[
(\tilde{u},\tilde{H}_0, \nabla\tilde{\theta}_0, \nabla \tilde{m}_0)\in H^s_\sigma \times H^s \times H^{s-1} \times H^s
\]
with
$|\tilde{m}_0|\equiv 1$,  $\tilde{\theta}_0 \in L_\infty(  \bR^N)$  and  $\tilde{\theta}_0\geq \sd_0 $.
Using $\tilde{z}_0 $ as the new initial data, one can repeat the above argument and  obtain a solution $\tilde{z}$  such that
\begin{align*}
\tilde{z}   \in \bE^s_\sigma (J_{T_1} )\times \bC^s (J_{T_1} ) \times \bH^s(J_{T_1}) \times \bH^{s+1}(J_{T_1})  
\end{align*}
for some $T_1>0$. This allows us to extend the solution $z$ onto $[0,T_0+T_1] $ by 
\begin{align*}
z(t)=
\begin{cases}
z(t) ,\quad & t\in [0,T_0]\\
\tilde{z}(t-T_0) , & t\in [T_0,T_0+T_1].
\end{cases}
\end{align*}
Inductively, this yields a maximal interval of existence $[0,T_*)$, which is half-open for otherwise one can repeat the above argument and extend $z$ beyond $T_*$.


\subsection{Uniqueness}

Assume that 
$$z_1=(u_1, H_1, \theta_1, m_1), \, z_2=(u_2, H_2, \theta_2, m_2) \in \bE^s_\sigma(J_T)\times \bC^s(J_T)\times \bH^s(J_T)\times  \bH^{s+1}(J_T)$$ 
are two solutions to \eqref{magneto sys 3} with the same initial data $z_0= (u_0,H_0,\theta_0, m_0)\in X^s$ such that $0<\sd_0 \leq \theta_0$ and $|m_0|\equiv 1$. 
Then  for properly chosen parameters $(  \sd_1, M_1,M_2,M_3)$, $z_1,z_2\in R_T(\sd_0,\sd_1,M_1,M_2,M_3)$.
Let $\pi_1$ and $\pi_2$ be the corresponding fluid pressures.
Then $\tilde{z}=(\tilde{u}, \tilde{H}, \tilde{\theta},  \tilde{m}) = z_1-z_2$ satisfies
\begin{equation}
\label{magneto sys abstract difference-uniqueness}
\left\{\begin{aligned}
\partial_t \tilde{u}  -   \nabla \cdot ( \nu (\theta_1)    \nabla \tilde{u}   ) + \nabla (\pi_1-\pi_2)
&= G_u(z_1) -  G_u(z_2) + \nabla \cdot ( (  \nu(\theta_1) - \nu (\theta_2) )\nabla u_2  )   ,\\
\partial_t  \tilde{H} + u_1 \cdot\nabla  \tilde{H} &= G_H(z_1) -  G_H(z_2) - \tilde{u} \cdot \nabla H_2        ,\\
\partial_t \tilde{\theta}  - \nabla \cdot ( \kappa (\theta_1)    \nabla \tilde{\theta}   )
&= G_\theta(z_1) -  G_\theta(z_2) + \nabla \cdot ( (  \kappa(\theta_1) - \kappa (\theta_2) )\nabla \theta_2  ),\\
\partial_t \tilde{m} -\cA(z_1)   \Delta  \tilde{m}  &= G_m(z_1) -  G_m(z_2)  + (\cA(z_1)- \cA(z_2))  \Delta  m_2 ,\\
(\tilde{u}(0), \tilde{H}(0), \tilde{\theta}(0), \tilde{m}(0))& = 0.
\end{aligned}\right.
\end{equation}
Setting $\tilde{z}_{n-1}=\tilde{z}_n=\tilde{z}$ in \eqref{EE-Cauchy} shows that 
there exists some $T_*>0$ such that for all $T\in (0,T_*]$
\begin{align*}
&\sup_{t\in [0,T]} \|\tilde{z} (t) \|_{\tilde{X}^{s-1}}^2  + \int_0^T \left( \|\tilde{u}(t)\|_{H^{s }}^2 +\|\nabla \tilde{\theta}(t)\|_{H^{s -1}}^2 +\|\Delta \tilde{m}(t)\|_{H^{s-1 }}^2 \right) \, d t \\
&\leq \frac{1}{2} \left[ \sup_{t\in [0,T]} \|\tilde{z} (t) \|_{\tilde{X}^{s-1}}^2 + \int_0^T \left( \|\tilde{u}(t)\|_{H^{s }}^2 +\|\nabla \tilde{\theta}(t)\|_{H^{s-1 }}^2 +\|\Delta \tilde{m}(t)\|_{H^{s -1}}^2 \right) \, d t  \right].
\end{align*}
Hence $z_1(t)=z_2(t)$ for all $t\in (0,T]$. Repeating this argument for finitely many times, one  obtains the uniqueness of solution in the class $\bE^s_\sigma(J_T)\times \bC^s(J_T)\times \bH^s(J_T)\times  \bH^{s+1}(J_T)$.



\section{Global Existence}\label{Section:global existence}

In this section, we will prove Theorem~\ref{Thm: global wellposedness}. 
The equation governing $F$ is hyperbolic and contains no dissipative term. As a consequence, the basic energy estimate does not provide direct coercive control of the higher-order $F$-norm. 
To overcome this difficulty, we will adopt the strategy in \cite{ChenZhang06} in our proof, 
where the lack of dissipation in the deformation equation is compensated by introducing the auxiliary variable 
$M=\nu(\theta) \nabla u - G^{\sT}$.  

In the sequel, we assume 
$$
\det F_0=1, \quad \nabla\cdot F_0=0, \quad   \theta_0\geq c > 0, \quad \text{and} \quad  |m_0|\equiv 1. $$
 Note that the incompressibility condition $\nabla\cdot u (t)=0$ and   $\det F_0=1$ implies $\det F (t)=1$ for all $t\in [0,T_*)$, 
see Remark~\ref{Remark:main_thm}(iii).
Here $T_*$ is the maximal existence time in Theorem~\ref{Thm: Local wellposedness}. 

\medskip
This enables us to introduce a new variable  $G= F^{-1} -I_N$. 
Its corresponding initial value is defined by   $G_0= F_0^{-1} -I_N \in H^s $.
In virtue of the relation $H=-(I_N+G)^{-1}G$ and a Neumann series argument, one can show that  for any $r\in \bN\cup \{0\}$, there exist positive constants $\Lambda_r$ such that
\begin{equation}
\label{G and H}
\Lambda_r^{-1} \| H (t) \|_{H^r} \leq \| G (t) \|_{H^r} \leq \Lambda_r \| H (t) \|_{H^r}
\end{equation}
provided $\|G\|_{H^k} <  1/\widetilde{C}_k $, where $k=\max\{2,r\}$ and   $\widetilde{C}_k \geq 1$ is the multiplier constant in
$$
\| uv\|_{H^k} \leq \widetilde{C}_k \| u \|_{H^k} \| v \|_{H^k}.
$$ 
Further, according to \cite[Section 3]{ChenZhang06}
\begin{align*}
\nabla\cdot (F F^{\sT}) = \sum_{i,j=0}^\infty (-1)^{i+j} \nabla \cdot (G^i (G^{\sT})^j) = g(G)  - \nabla \cdot G - \nabla \cdot G^{\sT},
\end{align*}
where
\begin{equation}
\label{g-G}
g(G):=\sum_{\substack{i,j=0, \\ i+j\geq 2}}^\infty (-1)^{i+j} \nabla \cdot (G^i (G^{\sT})^j).
\end{equation}
The RHS of \eqref{g-G} converges in $H^{s-1}$ when $\|G\|_{H^s}$ is sufficiently small as
\[
\|g(G)\|_{H^{s-1}}   \le \sum_{k=2}^\infty  (k+1)  \widetilde{C}_s^{k-1}  \|G\|_{H^s}^k .
\]
Moreover, it follows from \cite[p.1807-1808]{ChenZhang06} that
\begin{equation}
\label{est of gG-0}
\begin{split}
\| \nabla g(G) \|_{H^{s-2}}  \leq \sum_{k=2}^\infty k(k+1) C_s^k  \|G\|_{H^s}^{k-1} \| \nabla^2 G\|_{H^{s-2}} 
\end{split}
\end{equation}
for some constant $C_s$ that only depends on $s$.
Without loss of generality, we may assume that $C_s\ge \max\{\widetilde{C}_s, 1\}$.
By Theorem~\ref{Thm: Local wellposedness}, when $\|F_0-I_N\|_{H^s}$ is sufficiently small, there is some $\widehat{T} >0 $ such that 
\[
\widehat{T}:=   \sup  \left\{ T\in [0,T_*):\, \sup_{t\in [0,T]}  \|G(t)\|_{H^s} < 1/ C_s \right\}
\]


\subsection{Basic energy Estimate}

\begin{lemma}
For all $t\in [0,\widehat{T})$, it holds that
\begin{equation}
\label{Conserved quantity}
\frac{d}{dt} ( \|u (t) \|_{L_2}^2 +  \| H (t) \|_{L_2}^2 + \|\nabla m (t)\|_{L_2}^2    ) + 2 \underline{\nu}  \| \nabla u (t)\|_{L_2}^2 + 2\underline{ \alpha } \| m \times \Delta m (t) \|_{L_2}^2  \leq 0.
\end{equation}
\end{lemma}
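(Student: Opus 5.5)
Test the velocity equation with $u$, the $H$-equation with $H$, and the $m$-equation with $-\Delta m$ (equivalently, differentiate $\frac12\|\nabla m\|_{L_2}^2$). All cross terms should cancel, leaving the viscous and Gilbert dissipation. Throughout I use $\nabla\cdot u=0$, which kills every transport term of the form $(u\cdot\nabla f\mid f)$, and the constraint $|m|=1$. The proof only needs the regularity of Theorem~\ref{Thm: Local wellposedness}, so all integrations by parts are justified on $[0,\widehat T)$.

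\textbf{Step 1: the momentum equation.} Test \eqref{magneto sys 3}$_1$ with $u$. The pressure term vanishes and $(u\cdot\nabla u\mid u)=0$. The viscous term gives
\[
(\nu(\theta)\nabla u\mid\nabla u)\ge\underline\nu\|\nabla u\|_{L_2}^2 .
\]
Using \eqref{divergence-property}, the elastic terms become
\[
-(HH^{\sT}+H+H^{\sT}):\nabla u .
\]
The magnetic stress becomes $(\nabla m\odot\nabla m):\nabla u$, which after integrating by parts once more equals
\[
-(u\cdot\nabla m\mid\Delta m)+\tfrac12\big(u\cdot\nabla|\nabla m|^2\mid 1\big),
\]
and the last term vanishes by $\nabla\cdot u=0$.

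\textbf{Step 2: the deformation equation.} Test \eqref{magneto sys 3}$_3$ with $H$. The transport term drops, and the remaining contribution is
\[
\big((\nabla u)^{\sT}(I_N+H)\mid H\big)=\nabla u:(H+HH^{\sT})^{\sT}.
\]
Since $HH^{\sT}$ is symmetric and $\nabla u:H^{\sT}=\nabla u^{\sT}:H$, adding this to the elastic terms of Step 1 should give exactly
\[
-(H^{\sT}:\nabla u)+(\nabla u^{\sT}:H)=0 .
\]
Because the matrix index conventions are easy to get wrong, I will check this cancellation carefully in components, using $[\nabla u]_{ij}=\partial_iu_j$. Equivalently, $FF^{\sT}-I$ tested against $\nabla u$ cancels with $\frac12\frac{d}{dt}\|F\|^2$, and the constant $I$ contributes nothing since $\nabla\cdot u=0$.

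\textbf{Step 3: the magnetization equation.} I expect this to be the main point. Work with the form
\[
\partial_t m+u\cdot\nabla m=-\alpha\, m\times(m\times\Delta m)-\beta\, m\times\Delta m
\]
from \eqref{magneto sys}. Testing with $-\Delta m$ gives
\[
\tfrac12\frac{d}{dt}\|\nabla m\|_{L_2}^2-(u\cdot\nabla m\mid\Delta m)=\big(\alpha\, m\times(m\times\Delta m)+\beta\, m\times\Delta m\mid\Delta m\big).
\]
The $\beta$-term vanishes pointwise because $(m\times\Delta m)\cdot\Delta m=0$. The $\alpha$-term equals $-\alpha|m\times\Delta m|^2$, since $(m\times(m\times a))\cdot a=-|m\times a|^2$. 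The convective term $(u\cdot\nabla m\mid\Delta m)$ cancels exactly with the magnetic-stress term from Step 1. Finally, $\alpha\ge\underline\alpha$ bounds the dissipation from below by $\underline\alpha\|m\times\Delta m\|_{L_2}^2$.

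\textbf{Step 4: conclusion.} Sum the three identities and multiply by $2$ to obtain \eqref{Conserved quantity}. The temperature equation plays no role here; it enters only through the lower bounds $\nu\ge\underline\nu$ and $\alpha\ge\underline\alpha$.

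The only delicate points are the exact index bookkeeping in the $H$ cancellation and confirming the sign and constant of the $\nabla m\odot\nabla m$ stress so that it cancels the $u\cdot\nabla m$ term. I would verify both in coordinates first.
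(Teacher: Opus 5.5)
There is a genuine gap in Step~2. The elastic terms do not cancel using only $\nabla\cdot u=0$. By your own bookkeeping, testing \eqref{magneto sys 3}$_1$ with $u$ gives $-\int (HH^{\sT}+H+H^{\sT}):\nabla u$, while the $H$-equation gives $+\int (HH^{\sT}+H^{\sT}):\nabla u$. The $HH^{\sT}$ terms cancel, and so do $H^{\sT}:\nabla u=(\nabla u)^{\sT}:H$. The term coming from $\nabla\cdot H$ in the momentum equation survives:
\[
-\int H:\nabla u=-\int H_{ji}\,\partial_j u_i=\int (\nabla\cdot H)\cdot u .
\]
Incompressibility does not kill this term, since $\nabla\cdot H$ is not a gradient in general.

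Your ``equivalent'' $F$-argument hides the same term. It is only formal, since $F\notin L_2$. Formally $\tfrac12|F|^2=\tfrac N2+\tr H+\tfrac12|H|^2$, and the $H$-equation gives $\frac{d}{dt}\int \tr H=\int H:\nabla u$, which is exactly the leftover above.

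The missing ingredient is $\nabla\cdot H=\nabla\cdot F=0$. This is part of the standing hypotheses of Section~\ref{Section:global existence}, namely $\nabla\cdot F_0=0$. It propagates because $\partial_t(\nabla\cdot F)+u\cdot\nabla(\nabla\cdot F)=0$ when $\nabla\cdot u=0$; see Remark~\ref{Remark:main_thm}(iv). The paper's proof uses exactly this to drop $\nabla\cdot H$ from the momentum equation before testing, and it notes afterwards that the divergence-free condition on $F$ was needed.

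Once you add this, your argument coincides with the paper's. The momentum estimate, the cancellation of $(u\cdot\nabla m\mid\Delta m)$ against the magnetic stress, and the Gilbert dissipation $-\alpha|m\times\Delta m|^2$ all go through as you describe. Your sign on $\tfrac12(u\cdot\nabla|\nabla m|^2\mid 1)$ should be negative, but that term vanishes anyway.
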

\begin{proof}
Note that the assumption $\nabla \cdot F=0$ implies $\nabla \cdot H =0$.
Multiplying \eqref{magneto sys 3}$_1$ by $u$ and integrating it over $ \bR^N$ yields
\begin{align*}
& \frac{1}{2} \frac{d}{dt} \| u \|_{L_2}^2 + \int_{\bR^N}  \nu (\theta)    | \nabla u  |^2 \, dx   + \int_{ \bR^N} (u\cdot \nabla u )\cdot u  \, dx + \int_{ \bR^N} (\nabla \cdot  (\nabla m \odot \nabla m) ) \cdot u \, dx  \\
& =  \int_{ \bR^N} (\nabla \cdot(H H^{\sT})) \cdot u \, dx +  \int_{ \bR^N} (\nabla \cdot  H^{\sT} ) \cdot u \, dx ,
\end{align*}
Using the condition $\nabla\cdot u=0$, we have
$$
\int_{ \bR^N} (u\cdot \nabla u )\cdot u  \, dx = 0.
$$
By \eqref{divergence-property},  we have
\begin{align*}
 \int_{ \bR^N} (\nabla \cdot  (\nabla m \odot \nabla m) ) \cdot u \, dx = - \int_{ \bR^N}  \nabla m \odot \nabla m : \nabla u \, dx
\end{align*}
and
\begin{align*}
\int_{ \bR^N} (\nabla \cdot(H H^{\sT})) \cdot u \, dx  = - \int_{ \bR^N} H H^{\sT} : \nabla u \, dx = - \int_{ \bR^N}   (\nabla u )^{\sT} H : H \, dx  .
\end{align*}
The last term can be expressed as
\begin{align*}
 \int_{ \bR^N} (\nabla \cdot  H^{\sT} ) \cdot u \, dx = - \int_{ \bR^N} (\nabla u)^{\sT} : H\, dx.
\end{align*}
Next, multiplying \eqref{magneto sys 3}$_3$ by $H$ and integrating it over $ \bR^N$ yield
\begin{align*}
\frac{1}{2} \frac{d}{dt} \| H \|_{L_2}^2 +  \int_{ \bR^N} (u\cdot \nabla H):H  \, dx = \int_{ \bR^N} (\nabla u)^{\sT} H:H \, dx + \int_{ \bR^N} (\nabla u)^{\sT} :H \, dx  .
\end{align*}
It again follows from the condition $\nabla\cdot u=0$ that
$$
\int_{ \bR^N} (u\cdot \nabla H ):H  \, dx = 0.
$$
Multiplying \eqref{magneto sys 3}$_5$ by $\Delta m$ and integrating it over $ \bR^N$ yields
\begin{align*}
-\frac{1}{2} \frac{d}{dt} \| \nabla m \|_{L_2}^2 +  \int_{ \bR^N} (u\cdot \nabla m) \cdot \Delta m \, dx  + \int_{ \bR^N}\alpha (\theta) (m\times (m\times \Delta m))\cdot\Delta m \, dx = 0 ,
\end{align*}
where we have used the original formulation of the $m$-equation:
\[
\partial_t m + u \cdot\nabla m  = - \alpha (\theta) m\times (m \times \Delta m) - \beta (\theta) m \times \Delta m
\]
and the fact that $(m\times \Delta m)\cdot \Delta m =0$. 
Direct computations show
\begin{align*}
\int_{ \bR^N} (u\cdot \nabla m) \cdot \Delta m \, dx & = \int_{ \bR^N}   u^i \partial_i m_j \partial_k^2 m_j  \, dx \\
&= - \int_{ \bR^N}  ( \partial_k u^i \partial_i m_j \partial_k m_j + u^i \partial_i \partial_k m_j \partial_k m_j  )\, dx = -\int_{ \bR^N}  ( \partial_k u^i \partial_i m_j \partial_k m_j)\, dx \\
&= - \int_{ \bR^N} \nabla m \odot \nabla m : \nabla u\, dx,
\end{align*}
and
\begin{align*}
 \int_{ \bR^N} \alpha (\theta) (m\times (m\times \Delta m)) \cdot \Delta m  \, dx& =- \int_{ \bR^N} \alpha (\theta) | m \times \Delta m|^2 \, dx.  
\end{align*}
Combining all these equations, we thus derive  \eqref{Conserved quantity}.
\end{proof}

\begin{remark}
Note that we have used the divergence-free condition $\nabla \cdot F=0$ in the above proof.
\end{remark}


\subsection{An augmented system}

According to Remark~\ref{Remark:main_thm}(iv), $G(t,x)=\left(\nabla_x X(t,x) \right)^{\sT}-I_N$, where $X$ denotes the Lagrangian coordinates and $x$ the Eulerian coordinates, for all $(t,x)\in [0,T_*)\times \bR^N$.
Therefore $G^{\sT}$ satisfies the curl-free relation
\begin{equation}
\label{G-relation}
\partial_k G_{ji}= \partial_i G_{jk},
\end{equation}
see also \cite[Formula~(2.2b)]{SiderisThomases05};
and due to \cite[Formula~(2.5a)]{SiderisThomases05}, \eqref{magneto sys}$_3$ can be replaced by
\begin{equation}
\label{eq-G}
\partial_t G + (\nabla u)^{\sT} + u\cdot \nabla G + G (\nabla u)^{\sT} = 0.
\end{equation}
Using \eqref{g-G}, \eqref{magneto sys}$_1$ becomes
\begin{equation}
\label{eq-u-new}
\partial_t u + u \cdot \nabla u - \nabla \cdot ( \nu (\theta) \nabla u )  +\nabla \cdot G + \nabla \cdot G^{\sT} +\nabla \pi  =g(G) -\nabla \cdot(\nabla m \odot \nabla m) .
\end{equation}
Taking $\nabla $ on both sides yields
\begin{align*}
& \partial_t (\nabla u) + \nabla (u\cdot \nabla u)  - \nabla (\nabla \cdot ( \nu(\theta) \nabla u)) + \nabla( \nabla \cdot G + \nabla \cdot G^{\sT} ) + \nabla^2 \pi \\
&= \nabla  g(G)  - \nabla (\nabla \cdot(\nabla m \odot \nabla m) ).
\end{align*}
Direct computations show
\begin{align*}
\nabla (u\cdot \nabla u) &= u\cdot \nabla (\nabla u)  +  \nabla u(\nabla u) 
\end{align*}
and by \eqref{curl-curl-matrix}
\begin{align*}
\nabla (\nabla \cdot ( \nu(\theta) \nabla u)) & = \Delta ( \nu (\theta) \nabla u)  +   \curl \curl \nabla ( \nu(\theta)   u) 
- \curl \curl  ( \nabla \nu (\theta) \otimes u)\\
&= \Delta ( \nu (\theta) \nabla u) - \curl \curl ( \nabla \nu (\theta) \otimes u).
\end{align*}
In virtue of the curl-free condition relation~\eqref{G-relation}, one immediately obtains 
\begin{equation}
\label{Laplacian of G}
\nabla (\nabla \cdot G^{\sT}) =\Delta G^{\sT}  .
\end{equation}
We introduce a new variable $M:=\nu (\theta )  \nabla u - G^{\sT} $ with initial value $M_0: = \nu (\theta_0)  \nabla u_0 - G_0^{\sT} \in H^{s-1}$. 
Then $M$ solves the equation
\begin{equation}
\label{eq-M}
\begin{split}
& \partial_t M + (\nabla u) (M -I_N)  + u\cdot \nabla M  -\nu (\theta)  \Delta M + \nu (\theta) \nabla (\nabla \cdot G) + \nu (\theta) \nabla^2 \pi \\
& \quad  + \nu (\theta) \curl \curl ( \nabla \nu (\theta) \otimes u) - \nu' (\theta) (\nabla u) \nabla \cdot ( \kappa (\theta) \nabla \theta)\\
&= \nu (\theta)  \nabla   g(G)  - \nu (\theta)  \nabla (\nabla \cdot(\nabla m \odot \nabla m) ) + \nu' (\theta)\nu (\theta) \nabla u |\nabla u|^2 + \nu' (\theta) \alpha (\theta) \nabla u | \Delta m + | \nabla m|^2 m |^2.
\end{split}
\end{equation}
Note that the function $  M$ here corresponds to $M^{\sT}$ in \cite{ChenZhang06}  because we use a different convention for the 
gradient operator. 
Gathering \eqref{eq-G}, \eqref{eq-u-new}, and \eqref{eq-M} leads to the following augmented system:
\begin{equation}
\label{magneto sys-augmented}
\left\{\begin{aligned}
\partial_t u + u \cdot \nabla u - \nabla \cdot (\nu (\theta ) \nabla u )  +\nabla \cdot G + \nabla \cdot G^{\sT} +\nabla \pi  & =g(G) -\nabla \cdot(\nabla m \odot \nabla m),\\
\nabla \cdot u &=0 ,\\
\partial_t G + (\nabla u )^{\sT} + u\cdot \nabla G + G (\nabla u)^{\sT} &= 0    ,\\
\partial_t\theta+u\cdot\nabla\theta -  \nabla \cdot ( \kappa (\theta) \nabla \theta ) &= \nu(\theta)   |\nabla u|^2 + \alpha(\theta) |\Delta m+|\nabla m|^2m|^2\\
\partial_t m + u \cdot\nabla m  -\alpha (\theta) \Delta m  &= \alpha (\theta) |\nabla m|^2 m - \beta (\theta) m \times \Delta m ,\\
\partial_t M   + u\cdot \nabla M   
 -\nu (\theta)  \Delta M + \nu (\theta) \curl \curl ( \nabla \nu (\theta) & \otimes u)  - \nu' (\theta) (\nabla u) \nabla \cdot ( \kappa (\theta) \nabla \theta)\\
\quad + (\nabla u) (M -I_N) + \nu (\theta) \nabla (\nabla \cdot G) + \nu (\theta) \nabla^2 \pi 
&= \nu (\theta)  \nabla   g(G)  - \nu (\theta)  \nabla (\nabla \cdot(\nabla m \odot \nabla m) )\\
  + \nu' (\theta)\nu (\theta) \nabla u |\nabla u|^2 
&  + \nu' (\theta) \alpha (\theta) \nabla u | \Delta m + | \nabla m|^2 m |^2, \\
(u(0), G(0), \theta (0), m(0), M(0))& =(u_0, G_0, \theta_0, m_0, M_0 ) . 
\end{aligned}\right.
\end{equation}

In the following, we will perform a priori estimate for equations in \eqref{magneto sys-augmented} one by one. To this end, we will introduce a number of notations and derive a series of estimates. 
For any $j\in \{1,\cdots, N\}$, we define $\delta^j = (\delta^j_i)_{i=1}^N$. Given any   multi-index $\sigma \in \bN^N$, we define $\sigma^j = \sigma + \delta^j$.  
Define the energy functional  to be
\begin{align*}
\cE(t) =  \| u (t)   \|_{H^s}^2 + \| G (t)  \|_{H^s}^2 + \| \theta (t) - \theta_* \|_{H^s}^2 + \|\nabla m (t) \|_{H^s}^2 + \| M(t) \|_{H^{s-1}}^2
\end{align*}
and the dissipation rate functional to be
\begin{align*}
\cD(t) = \underline{\nu} \| \nabla u (t)\|_{H^s}^2 + \underline{\alpha} \| \Delta m(t) \|_{H^s}^2 + \underline{\kappa} \| \nabla \theta (t) \|_{H^s}^2 + 
\underline{\nu} \| \nabla^2 M (t)\|_{H^{s-2}}^2.
\end{align*}
In what follows, we occasionally use estimates of the form
$$
\cE(t)\le \cE(t)^{1/2} + \cE(t)^{3/2},  \quad \cE(t)^2 \le \cE(t)^{1/2} + \cE(t)^3, \quad \cE(t)^{5/2}\le  \cE(t)^{1/2} + \cE(t)^3.
$$
Note that $\nabla^2 G^{\sT} = \nabla^2 (\nu (\theta) \nabla u) - \nabla^2 M$. Thus   we have by Lemmas~\ref{Lem: Moser} and \ref{Lem: Nemyskii} that
\begin{equation}
\label{2nd order est of G}
\begin{split}
\| \nabla^2 G \|_{H^{s-2}} & \leq   \|\nabla^2 (\nu (\theta) \nabla u) \|_{H^{s-2}} + \| \nabla^2 M \|_{H^{s-2}}  \leq   \| \nu (\theta) \nabla u  \|_{H^s} + \| \nabla^2 M \|_{H^{s-2}}  \\
&\leq C (  \| \nu(\theta) \|_\infty \|\nabla^{s+1} u \|_{L_2} + \|\nabla^s \nu (\theta) \|_{L_2} \| \nabla u\|_\infty ) + \| \nabla^2 M \|_{H^{s-2}} \\
&\leq K (\| \theta   \|_\infty  )(1+\| \theta  -\theta_*\|_{H^s} )\| \nabla  u \|_{H^s}   +  \| \nabla^2 M \|_{H^{s-2}} \\
&\leq K (\| \theta   \|_\infty  )    (1+ \cE(t)^{1/2} )  \cD(t)^{1/2}.
\end{split}
\end{equation}
Throughout the rest of this article, we will frequently use the following estimate
\begin{equation}
\label{est of gG}
\begin{split}
\| \nabla g(G) \|_{H^{s-2}} \leq K (\| \theta   \|_\infty  )\sum_{k=2}^\infty k(k+1) C_s^k  \|G\|_{H^s}^{k-1} (1+ \cE(t)^{1/2}) \cD(t)^{1/2} ,
\end{split}
\end{equation}
which follows from \eqref{est of gG-0} and \eqref{2nd order est of G}.
Taking the divergence on both sides of \eqref{magneto sys-augmented}$_1$ yields
\begin{equation}
\label{Lap of pressure}
\Delta \pi   = \nabla \cdot ( \nabla \cdot (\nu (\theta) \nabla u )) - \nabla\cdot (u\cdot \nabla u) - 2 \Delta \tr G + \nabla \cdot g(G) - \nabla \cdot ( \nabla \cdot ( \nabla m \odot \nabla m )) ,
\end{equation}
where we have used \eqref{G-relation} to obtain
\begin{align*}
\nabla\cdot (\nabla \cdot G + \nabla \cdot G^{\sT} )  = 2 \partial_i \partial_j G_{ij} = 2 \partial_j \partial_j  G_{ii} = 2 \Delta \tr G.
\end{align*}
For future analysis, we will need the following estimate for $\Delta \pi$.
\begin{lemma}\label{Lem: Lap pressure est}
It holds that 
$$
\|\nabla^2 \pi \|_{H^{s-2}} \leq  K (\| \theta   \|_\infty  ) \left(1+ \cE(t)^{1/2}\right)\left(1 + \sum_{k=2}^\infty k(k+1) C_s^k  \|G\|_{H^s}^{k-1} \right) \cD(t)^{1/2}.
$$
\end{lemma}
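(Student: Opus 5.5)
Starting from the pressure equation \eqref{Lap of pressure}, I will write $\nabla^2\pi = \nabla^2(-\Delta)^{-1}(-\Delta\pi)$. The operator $\nabla^2(-\Delta)^{-1}$ is a matrix of Riesz transforms $R_iR_j$, which is bounded on $L_2$ and commutes with derivatives, so it is bounded on every $H^{r}$. This gives
$$
\|\nabla^2\pi\|_{H^{s-2}}\le C\bigl(\|\nabla\cdot(\nabla\cdot(\nu(\theta)\nabla u))\|_{H^{s-2}}+\|\nabla\cdot(u\cdot\nabla u)\|_{H^{s-2}}+\|\Delta\tr G\|_{H^{s-2}}+\|\nabla\cdot g(G)\|_{H^{s-2}}+\|\nabla\cdot(\nabla\cdot(\nabla m\odot\nabla m))\|_{H^{s-2}}\bigr).
$$
More robustly, I will write each term on the right of \eqref{Lap of pressure} as $\partial_i\partial_j$ or $\partial_i$ of something, and use that $\partial_i\partial_j\Delta^{-1}$ is bounded. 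The remaining work is to bound each of the five terms by the right-hand side of the lemma.

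The terms are handled one at a time.
\begin{itemize}
\item[(a)] For the viscous term, $\|\nabla\cdot\nabla\cdot(\nu(\theta)\nabla u)\|_{H^{s-2}}\le\|\nu(\theta)\nabla u\|_{H^s}$. Exactly as in \eqref{2nd order est of G}, Lemmas~\ref{Lem: Moser} and \ref{Lem: Nemyskii} bound this by $K(\|\theta\|_\infty)(1+\|\theta-\theta_*\|_{H^s})\|\nabla u\|_{H^s}\le K(\|\theta\|_\infty)(1+\cE^{1/2})\cD^{1/2}$. Here $\|\nabla u\|_\infty\lesssim\|\nabla u\|_{H^s}$ because $s>N/2$.
\item[(b)] For the convective term, since $\nabla\cdot u=0$ we have $u\cdot\nabla u=\nabla\cdot(u\otimes u)$, so the term is bounded by $\|u\cdot\nabla u\|_{H^{s-1}}\le C\|u\|_{H^s}\|\nabla u\|_{H^{s-1}}$ via Lemma~\ref{lem: transport-est}. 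This is $\le C\cE^{1/2}\cD^{1/2}$.
\item[(c)] The term $\Delta\tr G$ is bounded by $\|\nabla^2G\|_{H^{s-2}}$, and \eqref{2nd order est of G} bounds it by $K(\|\theta\|_\infty)(1+\cE^{1/2})\cD^{1/2}$. This is the step that needs the auxiliary variable $M$: $G$ itself carries no dissipation, and the bound only closes because $\nabla^2G^{\sT}=\nabla^2(\nu(\theta)\nabla u)-\nabla^2M$ and $\nabla^2M$ lies in $\cD$.
\item[(d)] Since $\|\nabla\cdot g(G)\|_{H^{s-2}}\le\|\nabla g(G)\|_{H^{s-2}}$, estimate \eqref{est of gG} gives the factor $\sum_k k(k+1)C_s^k\|G\|_{H^s}^{k-1}$.
\item[(e)] For the magnetic stress, $\|\nabla\cdot\nabla\cdot(\nabla m\odot\nabla m)\|_{H^{s-2}}\le C\|\nabla m\odot\nabla m\|_{H^s}\le C\|\nabla m\|_{H^s}\|\nabla m\|_{H^s}$, using that $H^s$ is an algebra. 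I will spend one derivative as dissipation: $\|\nabla m\|_{H^s}$ is controlled by $\|\nabla m\|_{L_2}+\|\nabla^2 m\|_{H^{s-1}}$, and $\|\nabla^2 m\|_{H^{s-1}}\lesssim\|\Delta m\|_{H^{s-1}}\le\cD^{1/2}$ by Fourier multipliers.
\end{itemize}

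The low-frequency piece $\|\nabla m\|_{L_2}$ in (e) is the main obstacle, because $\cD$ only contains $\|\Delta m\|_{H^s}$ and does not control $\nabla m$ in $L_2$. I will avoid it by distributing derivatives more carefully. Write $\partial_i\partial_j(\partial_i m\cdot\partial_j m)$ and expand with Lemma~\ref{Lem: Moser}, so that each product has at least one factor carrying two or more derivatives of $m$. That factor is bounded in a norm controlled by $\|\nabla^2 m\|_{H^{s}}\lesssim\|\Delta m\|_{H^s}$. The other factor, $\nabla m$, goes in $L_\infty$, $L_4$, or $H^s$, all bounded by $\cE^{1/2}$. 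For $s=2$ this uses the $L_4$ Gagliardo–Nirenberg bound \eqref{G-N ineq}, with $\|\nabla^2 m\|_{L_4}\lesssim\|\nabla^2m\|_{H^1}$. The result is $C\cE^{1/2}\cD^{1/2}$.

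The same care applies to (a) and (b). There $\|\nabla u\|_{H^s}$ is itself in $\cD$, so the terms are fine as written.

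Summing (a)–(e) and using $\cE^{1/2}\le1+\cE^{1/2}$ and $1\le 1+\sum_k(\cdots)$ yields the stated bound.
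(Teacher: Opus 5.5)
Your proposal is correct and follows essentially the same route as the paper. Both arguments replace $\|\nabla^2\pi\|_{H^{s-2}}$ by the equivalent norm $\|\Delta\pi\|_{H^{s-2}}$ (your Riesz-transform remark) and then bound the five terms of \eqref{Lap of pressure} using \eqref{2nd order est of G}, \eqref{est of gG}, Lemma~\ref{lem: transport-est} and Moser-type product estimates; your careful distribution of derivatives in the magnetic stress term, which avoids needing $\|\nabla m\|_{L_2}$ from $\cD$, is exactly what justifies the paper's bound $C\|\nabla m\|_{H^s}\|\Delta m\|_{H^s}$ for that term, which the paper states without comment.
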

\begin{proof}
Based on \eqref{est of gG-0}, \eqref{Lap of pressure} and Lemma~\ref{Lem: Moser}, direct computations  show
\begin{align*}
& \quad \|\nabla^2 \pi \|_{H^{s-2}}   \overset{\cdot}{=}  \|\Delta \pi \|_{H^{s-2}} \\
&\leq \| \nabla^2 (\nu(\theta) \nabla u )\|_{H^{s-2}} +  \| \nabla \cdot (u \cdot \nabla u)\|_{H^{s-2}} + C \|\nabla^2 G \|_{H^{s-2}}  + \|\nabla g(G) \|_{H^{s-2}} \\
&\quad  + \| \nabla^2 ( \nabla m \odot \nabla m ) \|_{H^{s-2}}   \\
&\leq K (\| \theta   \|_\infty  )  \Big[ ( 1+ \| \nabla^2 \theta \|_{H^{s-2}} + \| u \|_{H^s}) \| \nabla u \|_{H^s} +  \left(1+ \sum_{k=2}^\infty k(k+1) C_s^k  \|G\|_{H^s}^{k-1} \right) \|\nabla^2 G \|_{H^{s-2}}     \\
&\quad   + \|\nabla m \|_{H^s}   \| \Delta m \|_{H^s} \Big],
\end{align*}
where $\overset{\cdot}{=}$ means equivalent norms.
\end{proof}

\subsection{Estimates of $u$ and $G$ equations}\label{Section:u est}
Taking $\partial^\sigma \partial_j$ with $  |\sigma|\leq s-1 $  on both sides of \eqref{magneto sys-augmented}$_1$, multiplying by $\partial^\sigma \partial_j u$, and then integrating over $ \bR^N$ yields
\begin{equation}
\label{higher order u}
\begin{split}
&\frac{1}{2} \frac{d}{dt} \|\nabla u \|_{H^{s-1}}^2  + \underline{\nu}  \| \nabla^2 u \|_{H^{s-1}}^2  \\
& \leq  \sum_{|\sigma|\leq s-1} \left[ ( \partial^\sigma \partial_j g(G) | \partial^\sigma \partial_j u) - ( \partial^\sigma \partial_j \nabla \cdot G | \partial^\sigma \partial_j u) -   ( \partial^\sigma \partial_j \nabla \cdot G^{\sT} | \partial^\sigma \partial_j u) \right. \\
&\quad \left. -  ( \partial^\sigma \partial_j  (\nabla \cdot (\nabla m \odot \nabla m ))|  \partial^\sigma \partial_j  u ) -  ( \partial^\sigma \partial_j (u\cdot \nabla u ) | \partial^\sigma \partial_j u) - ( \partial^\sigma \partial_j \nabla \pi | \partial^\sigma \partial_j u) \right. \\
& \quad \left.   - ( \cR^{\sigma^j}_{\nu } \nabla u | \partial^\sigma \partial_j  \nabla u ) \right] .
\end{split}
\end{equation}
From the incompressibility condition $\nabla \cdot u=0$, \eqref{G-relation}, and \eqref{Laplacian of G}, 	we infer  
\begin{equation}
\label{est 1 higher order u}
\begin{split}
( \partial^\sigma \partial_j \nabla \pi | \partial^\sigma \partial_j u) & = 0 \\
 ( \partial^\sigma \partial_j \nabla \cdot G | \partial^\sigma \partial_j u) &= ( \partial^\sigma \partial_j \partial_k G_{ki} | \partial^\sigma \partial_j u^i) = ( \partial^\sigma  \partial_k \partial_i G_{kj} | \partial^\sigma \partial_j u^i) =0 \\
 ( \partial^\sigma \partial_j \nabla \cdot G^{\sT} | \partial^\sigma \partial_j u)  & = ( \partial^\sigma \partial_j \partial_k G_{ik} | \partial^\sigma \partial_j u^i) =  ( \partial^\sigma \partial_k \partial_k  G_{ij} | \partial^\sigma \partial_j u^i) = ( \partial^\sigma \Delta   G  | \partial^\sigma (\nabla u)^{\sT}).
\end{split}
\end{equation}

Taking $\partial^\sigma \partial_j$ with $  |\sigma|\leq s-1 $  on both sides of \eqref{magneto sys-augmented}$_3$, multiplying by $\partial^\sigma \partial_j G$, and then integrating over $ \bR^N$ yield
\begin{equation}
\label{higher order G}
\begin{split}
&\frac{1}{2} \frac{d}{dt} \|\nabla G \|_{H^{s-1}}^2   \\
&=  - \sum_{|\sigma| \leq s-1} \left[ ( \partial^\sigma \partial_j  (\nabla u)^{\sT}  | \partial^\sigma \partial_j   G )  +   ( \partial^\sigma \partial_j  (u \cdot \nabla G)   | \partial^\sigma \partial_j   G )   +  ( \partial^\sigma \partial_j  (G(\nabla u)^{\sT} ) | \partial^\sigma \partial_j   G ) \right] \\
&=   \sum_{|\sigma| \leq s-1}  \left[   ( \partial^\sigma    (\nabla u)^{\sT}  | \partial^\sigma \Delta   G ) -   ( \partial^\sigma \partial_j  (u \cdot \nabla G)   | \partial^\sigma \partial_j   G )  -  ( \partial^\sigma \partial_j  (G(\nabla u)^{\sT} ) | \partial^\sigma \partial_j   G ) \right].
\end{split}
\end{equation}
Combining   \eqref{higher order u}-\eqref{higher order G}  yields
\begin{equation}
\label{higher order u and G-2}
\begin{split}
&\frac{1}{2} \frac{d}{dt} \left( \|\nabla u \|_{H^{s-1}}^2 + \|\nabla G \|_{H^{s-1}}^2  \right)  + \underline{\nu} \| \nabla^2 u \|_{H^{s-1}}^2  \\
&  \leq  \sum_{|\sigma| \leq s-1} \left[  
( \partial^\sigma \partial_j g(G) | \partial^\sigma \partial_j u)  -( \partial^\sigma \partial_j (u\cdot \nabla u ) | \partial^\sigma \partial_j u)  
 -  ( \partial^\sigma \partial_j  (\nabla \cdot (\nabla m \odot \nabla m ))|  \partial^\sigma \partial_j  u )   \right. \\  
&\quad \left. - ( \partial^\sigma \partial_j  (u \cdot \nabla G)   | \partial^\sigma \partial_j   G )  -  ( \partial^\sigma \partial_j  (G(\nabla u)^{\sT} ) | \partial^\sigma \partial_j   G )   - ( \cR^{\sigma^j}_{\nu } \nabla u | \partial^\sigma \partial_j  \nabla u )\right] \\
&=: G.1 + \cdots + G.6.
\end{split}
\end{equation}
According to \eqref{est of gG}, we have
\begin{equation}
\label{est gG u}
\begin{split}
  G.1
 &\leq  \sum_{k=2}^\infty k(k+1) C_s^k  \| G\|_{H^s}^{k-1} \| \nabla^2 G \|_{H^{s-2}} \| \nabla u \|_{H^s} \\
&\leq K (\| \theta   \|_\infty  )  \sum_{k=2}^\infty k(k+1) C_s^k  \| G\|_{H^s}^{k-1}  (1 +   \cE(t)^{1/2}  )\cD(t).
\end{split}
\end{equation}
To estimate $G.2$, we use the incompressibility condition $\nabla \cdot u=0$ and \eqref{Sobolev embedding}:
\begin{equation}
\label{est u 2 higher oder}
\begin{split}
  G.2 
&\leq C  \sum_{\tau< \sigma^j} | ( \partial^{\sigma^j -\tau} u \cdot \nabla (\partial^\tau u)| \partial^\sigma\partial_j u) |  
 \leq  C\sum_{\tau< \sigma^j} \| \partial^{\sigma^j -\tau} u \|_{L_4} \| \nabla (\partial^\tau u) \|_{L_4} \| \partial^\sigma\partial_j u \|_{L_2} \\
&\leq  C \sum_{\tau< \sigma^j} \| \partial^{\sigma^j -\tau} u \|_{H^1} \| \nabla (\partial^\tau u) \|_{H^1} \| \partial^\sigma\partial_j u \|_{L_2} \\
& \leq C \| u  \|_{H^s} \| \nabla  u \|_{H^s}^2 \leq C \cE(t)^{1/2}\cD(t). 
\end{split}
\end{equation}
Similarly, 
\begin{equation}
\label{est odot m higher oder}
\begin{split}
   G.3  
&\leq  C \sum_{\tau \leq  \sigma^j}  | ( \partial^{\sigma^j-\tau} \nabla m \odot \partial^\tau \nabla m | \partial^\sigma  \partial_j \nabla u )| \\
& \leq C \sum_{\tau \leq  \sigma^j} \|\partial^{\sigma^j-\tau} \nabla m\|_{H^1}  \|\partial^\tau \nabla m \|_{H^1} \|\partial^\sigma  \partial_j \nabla u\|_{L_2}  
  \leq C \| \nabla m \|_{H^s} \| \nabla^2   m\|_{H^s}\| \nabla u\|_{H^s} \\
& \leq  C  \| \nabla m \|_{H^s}   \left( \| \nabla^2   m\|_{H^s}^2 + \| \nabla u\|_{H^s}^2  \right) \leq C \cE(t)^{1/2}\cD(t) .
\end{split}
\end{equation}
We split $G.4$ into
\begin{align*}
 G.4 \leq 
 \left|  (  \partial_j  (u \cdot \nabla G)   |   \partial_j   G ) \right| +  \sum_{1\leq |\sigma| \leq s-1}  \left|  ( \partial^\sigma \partial_j  (u \cdot \nabla G)   | \partial^\sigma \partial_j   G ) \right|.
\end{align*} 
The incompressibility condition $\nabla \cdot u=0$ and \eqref{2nd order est of G}  imply  
\begin{align*}
  \left|  (  \partial_j  (u \cdot \nabla G)   |   \partial_j   G ) \right| &= \left|  (  (\partial_j  u^k) \partial_k   G    |   \partial_j   G ) \right| = \left|  (  (\partial_j  u^k )   G)   |   \partial_j \partial_k  G ) \right|  \leq   \|G\|_\infty \| \nabla u\|_{L_2}  \| \nabla^2 G\|_{L_2} \\
  & \leq \|G\|_{H^s} \| \nabla u\|_{H^s}  \| \nabla^2 G\|_{H^{s-2}}  \\
  & \leq K( \| \theta\|_\infty ) (\cE(t)^{1/2} + \cE(t)) \cD(t).
\end{align*}
The remaining part can again be estimated by following a similar argument to \eqref{est u 2 higher oder}
\begin{align*}
& \quad \sum_{1\leq |\sigma| \leq s-1}  \left|  ( \partial^\sigma \partial_j  (u \cdot \nabla G)   | \partial^\sigma \partial_j   G ) \right|  \\
&  \leq C \sum_{1\leq |\sigma| \leq s-1} \sum_{0\neq \tau<\sigma^j}   \left| ( \partial^{\sigma^j - \tau} u \cdot \nabla (\partial^\tau G)   | \partial^\sigma \partial_j   G ) \right|  + \sum_{1\leq |\sigma| \leq s-1} \left| ( \partial^\sigma \partial_j u \cdot \nabla G   | \partial^\sigma \partial_j   G ) \right| \\
& \leq C  ( \| \nabla u \|_{H^{s-1}} \| \nabla^2 G\|_{H^{s-2}} + \| \nabla G \|_{H^1} \| \nabla u \|_{H^s} ) \|\nabla^2 G \|_{H^{s-2}} \\
&\leq K(\|\theta\|_\infty)  (\cE(t)^{1/2} + \cE(t)^{3/2} ) \cD(t). 
\end{align*}
Combining the above computations, we have
\begin{equation}
\label{est diverg u other terms}
G.4 \leq  K(\|\theta\|_\infty)  (\cE(t)^{1/2} +  \cE(t)^{3/2} ) \cD(t). 
\end{equation}
A similar computation to \eqref{est diverg u other terms} shows
\begin{align*}
   G.5
   \leq K( \| \theta\|_\infty ) (\cE(t)^{1/2} + \cE(t)) \cD(t).
\end{align*}
Applying Lemma~\ref{Lem: Commutator}, one can obtain
\begin{align*}
  G.6 & \leq  \| \cR^{\sigma^j}_{\nu }  \nabla u \|_{L_2}      \|\nabla  u\|_{H^s}   \leq K(\|\theta  \|_\infty )   \| \theta - \theta_* \|_{H^s}  \|\nabla  u\|_{H^s}^2  \\
&\leq K (\| \theta   \|_\infty  )     \cE(t)^{1/2}   \cD(t). 
\end{align*}
To sum up, the following estimate for $u$ and $G$ holds.
\begin{equation}
\label{higher order u and G final}
\begin{split}
&\frac{1}{2} \frac{d}{dt} \left[ \|\nabla u \|_{H^{s-1}}^2 + \|\nabla G \|_{H^{s-1}}^2  \right]  + \underline{\nu}  \| \nabla^2 u \|_{H^{s-1}}^2  \\
& \leq K(\|\theta\|_\infty) \left[  \sum_{k=2}^\infty k(k+1) C_s^k  \| G\|_{H^s}^{k-1} (  1+ \cE(t)^{1/2}     )  + \cE(t)^{1/2}   + \cE(t)^{3/2}  \right] \cD(t)
\\
& \leq K(\|\theta\|_\infty) \left[  \sum_{k=2}^\infty k(k+1) C_s^k  \| G\|_{H^s}^{k-2} (   \cE(t)^{1/2} + \cE(t)   )  + \cE(t)^{1/2}   + \cE(t)^{3/2}  \right] \cD(t) .
\end{split}
\end{equation}

\subsection{Estimates of $\theta$-equation}
Taking $\partial^\sigma$ with $  |\sigma|\leq s $  on both sides of \eqref{magneto sys-augmented}$_4$, multiplying by $\partial^\sigma (\theta - \theta_*) $, and then integrating over $ \bR^N$ yield
\begin{equation}
\label{higher order theta}
\begin{split}
&\frac{1}{2} \frac{d}{dt} \| \theta (t) -\theta_*  \|_{H^s}^2  + \underline{\kappa} \| \nabla \theta \|_{H^s}^2    \\
&= \sum_{|\sigma|\leq s} \left[     ( \partial^\sigma (\nu (\theta)  |\nabla u|^2 ) | \partial^\sigma (\theta - \theta_*) )  +   (\partial^\sigma  ( \alpha (\theta) |  \Delta m + |\nabla m|^2 m |^2 ) | \partial^\sigma (\theta - \theta_*) ) \right. \\
&\quad \left. - ( \partial^\sigma   (u\cdot \nabla \theta ) | \partial^\sigma (\theta - \theta_*) )  - (\cR^\sigma_\kappa \nabla \theta | \partial^\sigma \nabla \theta )  \right] = : T.1 + \cdots + T.4.
\end{split}
\end{equation}
Direct computations show 
\begin{equation}
\label{est T1}
\begin{split}
	T.1 &=  ( \nu (\theta)  \partial^\sigma  |\nabla u|^2 | \partial^\sigma (\theta - \theta_*) ) + (  \partial^\sigma \nu (\theta)   |\nabla u|^2 | \partial^\sigma (\theta - \theta_*) ) \\
	&\quad + \sum_{0< \tau <\sigma }
\begin{pmatrix}
\sigma \\
\tau
\end{pmatrix}	
	  ( \partial^{\sigma -\tau } (\nu (\theta) ) \partial^\tau  |\nabla u|^2 | \partial^\sigma (\theta - \theta_*) )  \\
&\leq K(\|\theta \|_\infty ) \left(	\|\theta - \theta_* \|_{H^s}  \| |\nabla u |^2 \|_{H^s} +   \|\theta - \theta_* \|_{H^s}^2  \| |\nabla u |^2\|_\infty + \|\theta - \theta_* \|_{H^s}^2 \| |\nabla u |^2 \|_{H^s}  \right) \\
	&\leq K(\|\theta \|_\infty )  ( \|\theta - \theta_* \|_{H^s}  +\|\theta - \theta_* \|_{H^s}^2 ) \| \nabla u \|_{H^s}^2  \\
	&\leq K(\|\theta \|_\infty ) ( \cE(t)^{1/2} + \cE(t)   ) \cD(t).
\end{split}
\end{equation}
Here we have used  the fact that $H^s(\bR^N)$ is a Banach algebra. 
To estimate $T.2$, we apply Lemma~\ref{Lem: Moser} to obtain
\begin{equation}
\label{est Delta m sq}
\begin{split}
& \quad \| | \Delta m + |\nabla m|^2 m |^2 \|_{H^s}  = \| | m \times (m \times \Delta m ) |^2 \|_{H^s} \leq C  \|\Delta m \|_\infty \|m \times (m \times \Delta m )  \|_{H^s}   \\
& \leq C \|  \Delta m \|_{H^s}  ( \| m \times \Delta m  \|_{H^s} + \|\nabla^s m \|_{L_2} \| m \times \Delta m  \|_\infty ) \\
& \leq C \|  \Delta m \|_{H^s} \left[   \|  \Delta m \|_{H^s} +  2 \|\nabla m \|_{H^s}  \|  \Delta m \|_{H^s} \right] \\
&\leq C  ( 1+ \|\nabla m \|_{H^s} ) \|  \Delta m \|_{H^s}^2.
\end{split}
\end{equation} 
Based on \eqref{Sobolev embedding} and \eqref{est Delta m sq}, we have
\begin{equation}
\label{est T2}
\begin{split}
T.2 & = (  (\alpha (\theta) ) \partial^\sigma | \Delta m + |\nabla m|^2 m |^2  | \partial^\sigma (\theta - \theta_*)   )  +  ( \partial^{\sigma  } ( \alpha (\theta))  | m \times (m \times \Delta m)|^2  | \partial^\sigma (\theta - \theta_*)    ) \\
&\quad + \sum_{0< \tau  <\sigma } 
\begin{pmatrix}
\sigma \\
\tau
\end{pmatrix}	
( \partial^{\sigma-\tau} (\alpha (\theta)) \partial^\tau (| \Delta m + |\nabla m|^2 m |^2) | \partial^\sigma (\theta - \theta_*)  )  \\
&\leq  K(\|\theta \|_\infty )\left[( \| \theta - \theta_*  \|_{H^s}   +\| \theta - \theta_*  \|_{H^s}^2 ) \| | \Delta m + |\nabla m|^2 m |^2 \|_{H^s}  +     \|  | m \times (m \times \Delta m)|^2  \|_{L_2}   \|\nabla \theta \|_{H^s}^2 \right] \\
&\leq  K(\|\theta \|_\infty ) ( \cE(t)^{1/2}   +\cE(t)^{3/2} ) \cD(t).
\end{split}
\end{equation}
Note that in the first line of \eqref{est T2}, the second term on RHS is the same as the first term when $\sigma=0$.
$T.3$ can be estimated as in G.2:
\begin{equation}
\label{est T3}
T.3 \leq C \|\theta -\theta_* \|_{H^s} ( \|\nabla u \|_{H^s}^2 + \| \nabla  \theta\|_{H^s}^2) \leq   C \cE(t)^{1/2}   \cD(t).
\end{equation}
The last term can be estimated by using Lemma~\ref{Lem: Commutator}:
\begin{equation}
\label{est T4}
\begin{split}
T.4 & \leq  \| \cR^\sigma_\kappa \nabla \theta \|_{L_2} \|\nabla \theta \|_{H^s} \leq K(\|\theta \|_\infty) \|\nabla \theta\|_{H^{s-1}} \|\nabla \theta \|_{H^s}^2 \leq K(\|\theta\|_\infty) \cE(t)^{1/2} \cD(t).
\end{split}
\end{equation}
Gathering \eqref{est T1} and \eqref{est T2}-\eqref{est T4}, we derive that
\begin{equation}
\label{higher order theta-final}
\begin{split}
 \frac{1}{2} \frac{d}{dt} \| \theta (t) -\theta_* \|_{H^s}^2  + \underline{\kappa} \| \nabla \theta \|_{H^s}^2   
 \leq  K(\|\theta \|_\infty ) ( \cE(t)^{1/2}    +\cE(t)^{3/2} ) \cD(t).  
\end{split}
\end{equation}


\subsection{Estimates of $m$-equation}
Next, we will estimate the equation of $m$. 
Taking $\partial^\sigma  $ with $ |\sigma |\leq s $ on both sides of \eqref{magneto sys-augmented}$_5$, multiplying by $\partial^\sigma \Delta m$, and then integrating over $ \bR^N$ yields
\begin{equation}
\label{Higher order est m-eq}
\begin{split}
&\frac{1}{2} \frac{d}{dt} \|  \nabla  m\|_{H^s }^2 + \underline{\alpha} \| \Delta  m \|_{H^s}^2    \\
&\leq -     \sum_{ |\sigma|\leq s }\left[ (\partial^\sigma   (u\cdot \nabla m) | \partial^\sigma \Delta m  ) +( \partial^\sigma  ( \alpha (\theta) |\nabla m|^2 m ) | \partial^\sigma \Delta m ) \right. \\
&\quad \left. -  ( \partial^\sigma   ( \beta(\theta) m\times \Delta m ) | \partial^\sigma \Delta m ) +  ( \cR^\sigma_\alpha \Delta m   | \partial^\sigma \Delta m ) \right] = : m.1 + \cdots + m.4.
\end{split}
\end{equation}
By \cite[Equation~(5.31)]{JiangLiuLuo23}, we have
\begin{equation}
\label{est m1}
m.1  \leq C \cE(t)^{1/2} \cD(t).
\end{equation}
We split $m.2$ into
\begin{align*}
m.2 & \le C \sum_{ |\sigma|\leq s } \left[\sum_{\tau<\sigma}  \left| ( \partial^{\sigma -\tau}\alpha (\theta)  \partial^\tau (|\nabla m|^2 m ) | \partial^\sigma \Delta m ) \right| + \sum_{\tau\leq \sigma} \left| (  \alpha (\theta)  \partial^{\sigma - \tau
} (\nabla m) \cdot \partial^\tau (\nabla m) m ) | \partial^\sigma \Delta m ) \right| \right. \\
& \quad \left.+  \sum_{\substack{\tau_3>0, \\ \tau_1+ \tau_2+\tau_3 =\sigma}} \left| (  \alpha (\theta)  \partial^{\tau_1
} (\nabla m) \cdot \partial^{\tau_2} (\nabla m) \partial^{\tau_3} m ) | \partial^\sigma \Delta m ) \right| \right]=: m.2.1+ m.2.2 + m.2.3.
\end{align*} 
From Lemmas~\ref{Lem: Moser} and \ref{Lem: Nemyskii}, we infer that
\begin{align*}
m.2.1 &\leq C \|\nabla \alpha (\theta) \|_{H^{s-1}} \| |\nabla m|^2 m\|_{H^s} \| \Delta m \|_{H^s} \\
&\leq K(\|\theta\|_\infty ) ( \||\nabla m |^2 \|_{H^s}   +  \||\nabla m |^2 \|_\infty \|\nabla^s m \|_{L_2} ) \| \Delta m \|_{H^s} \| \nabla \theta\|_{H^s}\\
& \leq K(\|\theta\|_\infty )(\cE(t) + \cE(t)^{3/2}) \cD(t).
\end{align*}
It follows from \cite[Equations~(5.32) and (5.33)]{JiangLiuLuo23} that
\begin{align*}
m.2.2+m.2.3 \leq K(\|\theta\|_\infty ) \| \nabla m\|_{H^s} \| \Delta m \|_{H^s}^2 \leq  K(\|\theta\|_\infty ) \cE(t)^{1/2} \cD(t).
\end{align*}
In sum
\begin{equation}
\label{est m2}
m.2 \leq K(\|\theta\|_\infty )(\cE(t)^{1/2} + \cE(t)^{3/2}) \cD(t).
\end{equation}
Employing Lemma~\ref{Lem: Moser} and \eqref{Sobolev embedding}, we get
\begin{equation}
\label{est m3}
\begin{split}
m.3 &= - \sum_{ 0<|\sigma|\leq s }\left[\sum_{0< \tau<\sigma} 
\begin{pmatrix}
\sigma \\
\tau
\end{pmatrix}	
(  \partial^{\sigma -\tau}  ( \beta(\theta) m) \times \Delta \partial^\tau m   | \partial^\sigma \Delta m ) + (  \partial^\sigma ( \beta(\theta) m) \times \Delta   m   | \partial^\sigma \Delta m ) \right] \\
&\leq C \sum_{ 0<|\sigma|\leq s } \left[ \sum_{\tau<\sigma} \|\partial^{\sigma -\tau}  ( \beta(\theta) m)\|_{H^1} \|\Delta \partial^\tau m\|_{H^1} + \|\beta(\theta) m\|_{H^s} \|\Delta m\|_\infty \right]\| \Delta m \|_{H^s} \\
&\leq K(\|\theta\|_\infty )   \cE(t)^{1/2} \cD(t).
\end{split}
\end{equation}
Applying Lemma~\ref{Lem: Commutator} yields
\begin{equation}
\label{est m4}
m.4 \leq \sum_{ |\sigma|\leq s }  \|\cR^\sigma_\alpha \Delta m \|_{L_2}  \| \Delta m \|_{H^s} \leq K(\|\theta\|_\infty ) \| \nabla \theta\|_{H^{s-1}}\| \Delta m \|_{H^s}^2 \leq 
K(\|\theta\|_\infty ) \cE(t)^{1/2} \cD(t).
\end{equation}
To sum up, we end up with 
\begin{equation}
\label{Higher order est m-eq final}
\begin{split} 
\frac{1}{2} \frac{d}{dt} \|  \nabla  m\|_{H^s }^2 + \underline{\alpha} \| \Delta  m \|_{H^s}^2  \leq K(\|\theta\|_\infty ) (\cE(t)^{1/2} + \cE(t)^{3/2}) \cD(t).
\end{split}
\end{equation}

\subsection{Estimates of $M$-equation}
Taking $\partial^\sigma \partial_j $ with $  |\sigma|\leq s-2$  on both sides of \eqref{eq-M}, multiplying by $\partial^\sigma \partial_j M$, and then integrating over $ \bR^N$ yields
\begin{equation}
\label{higher order M}
\begin{split}
&\quad \frac{1}{2} \frac{d}{dt} \|\nabla M \|_{H^{s-2}}^2  + \underline{\nu}  \| \nabla^2 M \|_{H^{s-2}}^2  \\
& \leq      \sum_{|\sigma |\leq s-2} \left[ ( \partial^\sigma \partial_j (  \nu (\theta) \nabla g(G)) | \partial^\sigma \partial_j M) - ( \partial^\sigma \partial_j ( \nu (\theta) \nabla (\nabla \cdot G) ) | \partial^\sigma \partial_j M) \right. \\
& \quad \left.  -    ( \partial^\sigma \partial_j  ( \nu (\theta) \nabla(\nabla \cdot (\nabla m \odot \nabla m )) ) |  \partial^\sigma \partial_j  M ) 
+ ( \partial^\sigma \partial_j  (\nu'(\theta) \nu (\theta) (\nabla u) |\nabla u|^2)| \partial^\sigma \partial_j M)   \right. \\
& \quad \left. +  ( \partial^\sigma \partial_j  (\nu'(\theta) \alpha (\theta) (\nabla u) |\Delta m + |\nabla m|^2 m|^2)| \partial^\sigma \partial_j M)  - ( \partial^\sigma \partial_j (u\cdot \nabla M ) | \partial^\sigma \partial_j M) \right.\\
& \quad \left. - (  \partial^\sigma \partial_j ( \nu (\theta) \curl\curl ( \nabla \nu (\theta) \otimes u))|  \partial^\sigma \partial_j M)
+(  \partial^\sigma \partial_j  (\nu'(\theta) \nabla u \nabla \cdot (\kappa(\theta)  \nabla \theta))|  \partial^\sigma \partial_j M) \right. \\
 & \quad - ( \partial^\sigma \partial_j ((\nabla u ) M ) | \partial^\sigma \partial_j M)    +  ( \partial^\sigma \partial_j \nabla u | \partial^\sigma \partial_j M)  -  ( \partial^\sigma \partial_j ( \nu (\theta) \nabla^2 \pi) | \partial^\sigma \partial_j M) \\
 & \quad \left.  - ( \cR^\sigma_\nu \Delta M | \partial^\sigma \Delta M )   \right] \\
 &=: M.1 + \cdots + M.12.
\end{split}
\end{equation}
First, integration by parts yields
\begin{align*}
 M.1= -(    \nu (\theta) \nabla g(G)  | \Delta M) - \sum_{0< |\sigma | \leq s-2 } ( \partial^\sigma  (  \nu (\theta) \nabla g(G)) | \partial^\sigma \Delta M)  =:M.1.1+M.1.2.
\end{align*}
In view of \eqref{est of gG}, we have
\begin{align*}
 M.1.1  \leq K(\|\theta\|_\infty) \|\nabla g(G) \|_{H^{s-2}}  \|\nabla^2 M\|_{H^{s-2}} \leq  K (\| \theta   \|_\infty  )\sum_{k=2}^\infty k(k+1) C_s^k   \|G\|_{H^s}^{k-1} (1+ \cE(t)^{1/2}) \cD(t).
\end{align*}
Note that $M.1.2 $ is nonempty only when $s\geq 3$. 
\begin{align*}
 M.1.2  & \leq | (     \nu (\theta) \partial^\sigma\nabla g(G)  | \partial^\sigma \Delta M)| +  C \sum_{0< |\sigma | \leq s-2 } \sum_{\tau < \sigma  } |( \partial^{\sigma -\tau}  (  \nu (\theta) )\partial^\tau \nabla g(G)  | \partial^\sigma \Delta M)| \\
&\leq K(\|\theta\|_\infty) \|\nabla g(G) \|_{H^{s-2}}  \|\nabla^2 M\|_{H^{s-2}} +   C \| \nabla \nu (\theta) \|_{H^{s-2}} \| \nabla g(G)\|_{H^{s-2}} \|\nabla^2 M\|_{H^{s-2}} \\
&\leq  K (\| \theta   \|_\infty  )\sum_{k=2}^\infty k(k+1) C_s^k   \|G\|_{H^s}^{k-1} (1+ \cE(t)^{1/2}+ \cE(t)) \cD(t).
\end{align*}
In sum,
\begin{equation}
\label{est M1}
 M.1  \leq  K (\| \theta   \|_\infty  )\sum_{k=2}^\infty k(k+1) C_s^k   \|G\|_{H^s}^{k-1} (1+ \cE(t)^{1/2}+ \cE(t)) \cD(t).
\end{equation}
Next, by the incompressibility condition $\nabla \cdot u=0$ and \eqref{G-relation}
\begin{align*}
M.2 & =\sum_{  |\sigma | \leq s-2 } \left[ ( \partial^\sigma \partial_l (\nu(\theta) \partial_i \partial_k G_{kj} ) | \partial^\sigma \partial_l (\nu (\theta) \partial_i u^j ) ) - ( \partial^\sigma \partial_l (\nu(\theta) \partial_i \partial_k G_{kj}) | \partial^\sigma \partial_l G_{ji} )  \right] \\
&= \sum_{  |\sigma | \leq s-2 } \left[ ( \partial^\sigma \partial_l (\nu(\theta) \partial_i \partial_j \tr G ) | \partial^\sigma \partial_l (\nu (\theta) \partial_i u^j ) )  - ( \partial^\sigma \partial_l (\nu(\theta) \partial_i \partial_j \tr G ) | \partial^\sigma \partial_l G_{ji} ) \right]\\
& = - \sum_{  |\sigma | \leq s-2 } \left[ ( \partial^\sigma \partial_l (\partial_j \nu(\theta) \partial_i   \tr G ) | \partial^\sigma \partial_l (\nu (\theta) \partial_i u^j ) ) 
+( \partial^\sigma \partial_l ( \nu(\theta) \partial_i   \tr G ) | \partial^\sigma \partial_l (\partial_j \nu (\theta) \partial_i u^j ) ) \right.\\ 
&\quad  \left. + ( \partial^\sigma \partial_l (\nu(\theta) \partial_i \partial_j \tr G ) | \partial^\sigma \partial_l G_{ji} ) \right]=: M.2.1+M.2.2+M.2.3.
\end{align*}
We will estimate these terms one by one as follows:
\begin{align*}
 M.2.1  & \leq\sum_{  |\sigma | \leq s-2 } \Big[  | ( \partial_j \nu(\theta) \partial^\sigma \partial_l  \partial_i   \tr G | \partial^\sigma \partial_l (\nu (\theta) \partial_i u^j ) )|  \\
 &\quad  + C \sum_{\tau<\sigma^l} |(  ( \partial^{\sigma^l - \tau} \partial_j \nu(\theta)) \partial^\tau \partial_i   \tr G | \partial^\sigma \partial_l (\nu (\theta) \partial_i u^j ) )| \Big] \\
&\leq C  \| \nabla \nu (\theta) \|_{H^1} \| \nu (\theta) \nabla u \|_{H^s} \| \nabla^2 \tr G\|_{H^{s-2}} + C \| \nabla \nu (\theta) \|_{H^{s-1}} \| \nu (\theta) \nabla u \|_{H^s} \|  \tr G\|_{H^s} \\
&\leq K(\|\theta\|_\infty ) ( \|G \|_{H^s} + \| \theta-\theta_*\|_{H^s}+ \|G \|_{H^s}   \| \theta-\theta_*\|_{H^s} + \| \theta-\theta_*\|_{H^s}^2)\\
&\quad \times ( \| \nabla^2 G\|_{H^{s-2}} + \|\nabla \theta \|_{H^s} ) \|\nabla u \|_{H^s} \\
&\leq K(\|\theta\|_\infty ) (\cE(t)^{1/2} + \cE(t)^{3/2}  ) \cD(t) ,
\end{align*}
where we have used Lemma~\ref{Lem: Moser} and \eqref{Sobolev embedding},
and
\begin{align*}
 M.2.2  & \leq   \sum_{  |\sigma | \leq s-2 } \left[  |(    \nu(\theta) \partial^\sigma \partial_l\partial_i   \tr G   | \partial^\sigma \partial_l (\partial_j \nu (\theta) \partial_i u^j ) )| \right. \\
 &\quad \left. + \, C  \sum_{\tau<\sigma^l}  |( \partial^{\sigma^l -\tau} \nu(\theta) \partial^\tau \partial_i   \tr G   | \partial^\sigma \partial_l (\partial_j \nu (\theta) \partial_i u^j ) )  | \right]\\
&\leq K(\|\theta\|_\infty) \| \nabla^2 \tr G\|_{H^{s-2}} \|  \partial_j \nu (\theta) \partial_i u^j \|_{H^{s-1}} + C \|\nabla  	\nu (\theta) \|_{H^{s-1}} \|G\|_{H^s} \|   \partial_j \nu (\theta) \partial_i u^j \|_{H^{s-1}}  \\
&\leq K(\|\theta\|_\infty)	( \cE(t)^{1/2} + \cE(t)  ) \cD(t) 
\end{align*}
in virtue of \eqref{transport-est}.
To estimate $M.2.3$, we observe that
\begin{align*}
M.2.3 & = - \sum_{  |\sigma | \leq s-2 } \left[ ( \partial^\sigma \partial_l (\partial_j  \nu(\theta) \partial_i \tr G ) | \partial^\sigma \partial_l G_{ji} ) + ( \partial^\sigma \partial_l (  \nu(\theta) \partial_i \tr G ) | \partial^\sigma \partial_l \partial_j G_{ji} ) \right] \\
&=  \sum_{  |\sigma | \leq s-2 } \left[ ( \partial^\sigma   (\partial_j  \nu(\theta) \partial_i \tr G ) | \partial^\sigma \Delta G_{ji} ) -  ( \partial^\sigma \partial_l (  \nu(\theta) \partial_i \tr G ) | \partial^\sigma \partial_l \partial_i \tr G  ) \right] =: M.2.3.1 + M.2.3.2.
\end{align*}
Direct computations show
\begin{align*}
 M.2.3.1  \leq K(\|\theta\|_\infty)  \| G\|_{H^s} \| \nabla \theta\|_{H^s} \|\nabla^2 G \|_{H^{s-2}} \leq  K(\|\theta\|_\infty)	( \cE(t)^{1/2} + \cE(t)  ) \cD(t) 
\end{align*}
and
\begin{align*}
 M.2.3.2  & \le    \sum_{  |\sigma | \leq s-2 } \left[  \left| (  \nu(\theta) \partial^\sigma   \partial_l \partial_i  \tr G   | \partial^\sigma \partial_l \partial_i \tr G  )  \right|
 + C \sum_{\tau< \sigma^l} \left|  ( \partial^{\sigma^l -\tau} (  \nu(\theta) ) \partial^\tau \partial_i \tr G   | \partial^\sigma \partial_l \partial_i \tr G  ) \right| \right] \\
 & \leq  \overline{\nu} \| \nabla^2 \tr G\|_{H^{s-2}}^2  + C \| \nabla \nu (\theta) \|_{H^{s-1}} \| G\|_{H^s} \| \nabla^2 G \|_{H^{s-2}} \\
 &  \leq  \overline{\nu} \| \nabla^2 \tr G\|_{H^{s-2}}^2  + K(\|\theta\|_\infty ) (\cE(t)^{1/2} + \cE(t) ) \cD(t) .
\end{align*}
According to \cite[Equation~(3.11)]{ChenZhang06} and \eqref{2nd order est of G},
\begin{align*}
\| \nabla^2 \tr G\|_{H^{s-2}}^2   \leq C ( \| G\|_{H^s}^2 + \| G\|_{H^s}^4 ) \|\nabla^2 G \|_{H^{s-2}}^2  
 \leq K(\|\theta\|_\infty ) (\cE(t)   +   \cE(t)^3 ) \cD(t) .
\end{align*}
We note in passing that the condition ${\rm det} (G+  I_N)=1$ is essential for deriving
the estimate 
$$ \| \nabla^2 \tr G\|_{H^{s-2}}^2   \leq C ( \| G\|_{H^s}^2 + \| G\|_{H^s}^4 ) \|\nabla^2 G \|_{H^{s-2}}^2.$$ 
In sum,
\begin{equation}
\label{est M2}
 M.2  \leq  K(\|\theta\|_\infty ) (\cE(t)^{1/2}   + \cE(t)^3 ) \cD(t)  .
\end{equation}
Via a similar computation, we have
\begin{equation}
\label{est M3}
\begin{split}
M.3 & =  \sum_{  |\sigma | \leq s-2 }    ( \partial^\sigma   ( \nu (\theta) \nabla(\nabla \cdot (\nabla m \odot \nabla m )) ) |  \partial^\sigma \Delta  M )  \\
&\leq    \sum_{  |\sigma | \leq s-2 } \Big[ \left|(  \nu (\theta) \partial^\sigma   (\nabla(\nabla \cdot (\nabla m \odot \nabla m )) ) |  \partial^\sigma \Delta  M ) \right| \\
&\quad   + C\sum_{\tau < \sigma} \left| ( \partial^{\sigma -\tau }   ( \nu (\theta)) \partial^\tau ( \nabla(\nabla \cdot (\nabla m \odot \nabla m )) ) |  \partial^\sigma \Delta  M ) \right| \Big] \\
&\leq K(\|\theta\|_\infty ) ( 1+ \| \theta- \theta_*\|_{H^s} ) \|\nabla m \|_{H^s} \| \Delta m \|_{H^s}  \| \nabla^2 M\|_{H^{s-2}}\\
& \leq  K(\|\theta\|_\infty ) (\cE(t)^{1/2} + \cE(t)   ) \cD(t) 
\end{split}
\end{equation}
In $M.4$, when $\sigma=0$, it follows from \eqref{Sobolev embedding} that
\begin{align*}
 & -( \   (\nu'(\theta) \nu (\theta) (\nabla u) |\nabla u|^2)|   \Delta M)  \leq K(\|\theta\|_\infty )\| |\nabla u |^3 \|_{L_2} \| \nabla^2 M\|_{H^{s-2}}\\
&\leq  K(\|\theta\|_\infty )\| \nabla u \|_{L_6}^3 \| \nabla^2 M\|_{H^{s-2}} \leq  K(\|\theta\|_\infty ) \| \nabla u \|_{H^1}^3  \| \nabla^2 M\|_{H^{s-2}} \leq  K(\|\theta\|_\infty ) \cE(t) \cD(t);
\end{align*}
when $|\sigma|>0$ (in this case $s\geq 3$), it follows from Lemma~\ref{Lem: Moser} that
\begin{align*}
 & \quad -\sum_{ 0< |\sigma | \leq s-2 }   ( \partial^\sigma   (\nu'(\theta) \nu (\theta) (\nabla u) |\nabla u|^2)| \partial^\sigma \Delta M)   \\
& \le   \sum_{  0<|\sigma | \leq s-2 } \left[ \left| ( \nu'(\theta) \nu (\theta) \partial^\sigma   ((\nabla u) |\nabla u|^2)| \partial^\sigma \Delta M)\right| C  + \sum_{\tau <\sigma} \left| ( \partial^{\sigma   - \tau} (\nu'(\theta) \nu (\theta)) \partial^\tau ( (\nabla u) |\nabla u|^2)| \partial^\sigma \Delta M) \right| \right] \\
&\leq  K(\|\theta\|_\infty ) ( 1+  \|\nabla( \nu'(\theta) \nu (\theta)) \|_{H^k}  ) \| |\nabla u|^3 \|_{H^{s-2}} \| \nabla^2 M\|_{H^{s-2}}\\
&\leq  K(\|\theta\|_\infty ) ( 1+  \|\nabla  \theta \|_{H^k}  )    ( \|\nabla u \|_\infty^2  \| \nabla u \|_{H^{s-2}} + \|\nabla u\|_\infty \| |\nabla u|^2 \|_{H^{s-2}} ) \| \nabla^2 M\|_{H^{s-2}} \\
& \leq K(\|\theta\|_\infty ) (\cE(t) + \cE(t)^{3/2} ) \cD(t) .
\end{align*}
Here $k=\max\{1,s-3\}$.
In sum,
\begin{equation}
\label{est M4}
M.4 \leq K(\|\theta\|_\infty ) (\cE(t) + \cE(t)^{3/2} ) \cD(t) .
\end{equation}
In virtue of \eqref{est Delta m sq}, we obtain
\begin{equation}
\label{est M5}
\begin{split}
M.5 & \le     \sum_{  |\sigma | \leq s-2 } \Big[ C \sum_{ \tau< \sigma^j }
\left| (  \partial^\tau (\nu'(\theta) \alpha (\theta)  \nabla u) \partial^{\sigma^j  -\tau } ( |\Delta m + |\nabla m|^2 m |^2) | \partial^\sigma \partial_j M) \right| \\
&\quad   +  \left| (  \partial^{\sigma} \partial_j ( \nu'(\theta) \alpha (\theta)  \nabla u )   |\Delta m + |\nabla m|^2 m |^2  | \partial^\sigma \partial_j M)  \right| \Big] \\
&\leq C \sum_{  |\sigma | \leq s-2 } \Big[  \sum_{  \tau< \sigma^j } \| \partial^\tau (\nu'(\theta) \alpha (\theta)  \nabla u) \|_{H^1} \||\Delta m + |\nabla m|^2 m |^2\|_{H^s} \|M\|_{H^{s-1}}     \\
& \quad  + \| \partial^\sigma \partial_j (\nu'(\theta) \alpha (\theta)  \nabla u) \|_{L_2} \||\Delta m + |\nabla m |^2 m|^2\|_\infty \|M\|_{H^{s-1}}    \Big] \\
&\leq  K(\|\theta \|_\infty )  (1 + \| \theta-\theta_*\|_{H^s}^2)\|    u   \|_{H^s}  \|M\|_{H^{s-1}}  (1+    \|\nabla m \|_{H^s} ) \| \Delta m \|_{H_s}^2  \\
& \leq K(\|\theta\|_\infty ) (  \cE(t)      +  \cE(t)^{5/2}   ) \cD(t).  
\end{split}
\end{equation}
In the above computations, we have used the  fact that for any $|\tau |\leq s-1$
\begin{align*}
&\quad \| \partial^\tau (\nu'(\theta) \alpha (\theta)  \nabla u)  \|_{L_2} \\
& \leq \|\nu'(\theta) \alpha (\theta)\|_\infty \|  \partial^\tau \nabla u \|_{L_2} + \sum_{\substack{\tau_3<\tau, \\ \tau_1+\tau_2+\tau_3=\tau}} \| \partial^{\tau_1} \nu'(\theta) \|_{H^1} \| \partial^{\tau_2} \alpha(\theta) \|_{H^1} \| \partial^{\tau_3} \nabla u\|_{H^1} \\
&\leq K(\|\theta \|_\infty )  (1 + \| \theta-\theta_*\|_{H^s}^2)\|    u   \|_{H^s}.
\end{align*}
Estimating as in \eqref{est diverg u other terms}, we have
\begin{equation}
\label{est M6}
M.6 \leq C \cE(t)^{1/2} \cD (t) .
\end{equation}
Similarly,
\begin{equation}
\label{est M9}
M.9 \leq   C \|M\|_{H^{s-1}} \|\nabla u \|_{H^s} \| \nabla^2 M \|_{H^{s-2}} \leq  C \cE(t)^{1/2} \cD (t) .
\end{equation}
Based on \eqref{higher order u} and \eqref{est 1 higher order u}, we have
\begin{align*}
M.10 & = \sum_{|\sigma| \leq s-2} \left[ ( \partial^\sigma \partial_j \nabla u |  \partial^\sigma \partial_j (\nu (\theta) \nabla u ))  + (\partial^\sigma \nabla u | \partial^\sigma \Delta G^{\sT} ) \right] \\
& = - \frac{1}{2} \frac{d}{dt} \| \nabla u \|_{H^{s-2}}^2 +  \sum_{|\sigma|\leq s-2} \left[ ( \partial^\sigma \partial_j g(G) | \partial^\sigma \partial_j u)  -  ( \partial^\sigma \partial_j (u\cdot \nabla u ) | \partial^\sigma \partial_j u)    \right. \\
&\quad  \left. -  ( \partial^\sigma \partial_j  (\nabla \cdot (\nabla m \odot \nabla m ))|  \partial^\sigma \partial_j  u )  \right]  .
\end{align*}
The terms in the bracket on the RHS above are bounded by their counterparts in Section~\ref{Section:u est}.
We thus obtain
\begin{equation}
\label{est M10}
M.10 \leq   K (\| \theta   \|_\infty  ) \left[ \sum_{k=2}^\infty k(k+1) C_s^k  \| G\|_{H^s}^{k-1}   (1 + \cE(t)^{1/2} )  + \cE(t)^{1/2} \right] \cD(t) - \frac{1}{2} \frac{d}{dt} \| \nabla u \|_{H^{s-2}}^2.
\end{equation}
Next, we estimate the highest order terms. Note that 
\begin{align*}
M.7 & = \sum_{  |\sigma | \leq s-2 }   (  \partial^\sigma ( \nu (\theta) \curl\curl ( \nabla \nu (\theta) \otimes u))|  \partial^\sigma \Delta M) \\
M.8 & = -\sum_{  |\sigma | \leq s-2 }  (  \partial^\sigma    (\nu'(\theta) \nabla u \nabla \cdot (\kappa(\theta)  \nabla \theta))|  \partial^\sigma \Delta M).
\end{align*}
When $ s=2$, we have
\begin{align*}
M.7 & \leq K(\|\theta\|_\infty) \| \nabla^2 M\|_{H^{s-2}}   \|  \nabla \nu (\theta) \otimes u \|_{H^2}  
 \leq   K(\|\theta\|_\infty)\|u \|_{H^s} \| \nabla^2 M\|_{H^{s-2}} \| \nabla \theta \|_{H^s}  \\ 
 &\leq K(\|\theta\|_\infty) \cE(t)^{1/2} \cD(t) \\
M.8 & \leq K(\|\theta\|_\infty) \|\nabla (\kappa(\theta)  \nabla \theta) \|_{L_2} \| \nabla^2 M\|_{H^{s-2}} \| \nabla u \|_{H^s}  \\
& \leq 
K(\|\theta\|_\infty	)   \| \nabla^2 M\|_{H^{s-2}} \|\nabla u \|_{H^s} ( \| \nabla \kappa (\theta) \|_{H^1} \|\nabla \theta \|_{H^1} + \| \kappa (\theta)\|_\infty \|\nabla  \theta \|_{H^1} )  \\
 &\leq K(\|\theta\|_\infty) ( \cE(t)^{1/2} + \cE(t) ) \cD(t) .
\end{align*}
When $s\geq 3$, we  derive from Lemma~\ref{Lem: Moser} and \eqref{Sobolev embedding} that
\begin{align*}
M.7 & \leq   \sum_{  |\sigma | \leq s-2 } \Big[ \left| (   \nu (\theta) \partial^\sigma (\curl\curl (\nu'(\theta) \nabla \theta \otimes u))|  \partial^\sigma \Delta M)  \right|\\
&\quad   + C \sum_{\tau<\sigma} \left| (  \partial^{\sigma-\tau} ( \nu (\theta)) \partial^\tau (\curl\curl (\nu'(\theta) \nabla \theta \otimes u))|  \partial^\sigma \Delta M) \right| \Big]  \\
&\leq K(\|\theta\|_\infty) \| \nabla^2 M\|_{H^{s-2}} \left( 1 + \| \nabla \theta \|_{H^{s-1}}  \right) \| \nabla \nu (\theta) \otimes u \|_{H^s} \\
&\leq K(\|\theta\|_\infty) \| \nabla^2 M\|_{H^{s-2}} \left( 1 + \| \nabla \theta \|_{H^{s-1}}  \right)    \|u\|_{H^s}  \|\nabla \theta\|_{H^s}  \\
&\leq K(\|\theta\|_\infty) ( \cE(t)^{1/2} + \cE(t) ) \cD(t)  
\end{align*}
and
\begin{align*}
&\quad M.8 \\
& \le    \sum_{  |\sigma | \leq s-2 } \left[  \left|(  \nu'(\theta) \nabla u \partial^\sigma    ( \nabla \cdot (\kappa(\theta)  \nabla \theta))|  \partial^\sigma \Delta M) \right| + C \sum_{\tau<\sigma} \left|(  \partial^{\sigma - \tau}    (\nu'(\theta) \nabla u ) \partial^\tau ( \nabla \cdot (\kappa(\theta)  \nabla \theta))|  \partial^\sigma \Delta M) \right| \right]\\
&\leq   K(\|\theta\|_\infty) \| \nabla^2 M\|_{H^{s-2}} ( \|\nabla u\|_\infty + \| \nu'(\theta) \nabla u \|_{H^{s-1}} ) \| \kappa (\theta) \nabla \theta \|_{H^{s-1}} \\
&\leq K(\|\theta\|_\infty) \| \nabla^2 M\|_{H^{s-2}} \| \nabla u\|_{H^{s-1}} (1+  \|\nabla^{s-1} \theta \|_{L_2})   ( \| \nabla \theta \|_{s-1} + \|\nabla \theta \|_\infty \| \nabla^{s-1} \theta\|_{L_2}  ) \\
&\leq K(\|\theta\|_\infty) ( \cE(t)^{1/2}   + \cE(t)^{3/2} ) \cD(t)  .
\end{align*}
In sum,
\begin{equation}
\label{est M78}
M.7+M.8 \leq K(\|\theta\|_\infty) ( \cE(t)^{1/2}   + \cE(t)^{3/2} ) \cD(t)  .
\end{equation}
To estimate the term containing  pressure,  we observe that
\begin{align*}
M.11 &= - \sum_{  |\sigma | \leq s-2 } \left[ \sum_{\tau <\sigma^l} 
\begin{pmatrix}
\sigma^l \\
\tau
\end{pmatrix}
 ( \partial^{\sigma^l - \tau} ( \nu (\theta) ) \partial^\tau \nabla^2 \pi  | \partial^\sigma \partial_l M)  +   (    \nu (\theta) \partial^\sigma \partial_l \nabla^2 \pi) | \partial^\sigma \partial_l M) \right]\\
 &=:M.11.1+M.11.2.
\end{align*}
It follows from Lemma~\ref{Lem: Lap pressure est}  that
\begin{align*}
M.11.1 &\leq  C \sum_{ 0< |\sigma | \leq s-2 }  \sum_{\tau <\sigma^l} 
\| \partial^{\sigma^l - \tau}   \nu (\theta)\|_{H^1} \| \partial^\tau \nabla^2 \pi \|_{L_2}\| \nabla^2 M \|_{H^{s-2}}\\
&\leq K (\| \theta   \|_\infty  ) \| \nabla \theta \|_{H^{s-1}} \| \nabla^2 \pi \|_{H^{s-2}} \| \nabla^2 M \|_{H^{s-2}}\\
&\leq  K (\| \theta   \|_\infty  ) \left(  \cE(t)^{1/2}  +\cE(t) \right) \left(1 + \sum_{k=2}^\infty k(k+1) C_s^k   \|G\|_{H^s}^{k-1} \right) \cD(t) .
\end{align*}
By the incompressibility condition $\nabla \cdot u =0$, 
\begin{align*}
M.11.2 & = \sum_{  |\sigma | \leq s-2 } \left[ (    \nu (\theta) \partial^\sigma \partial_l \partial_i \partial_j \pi) | \partial^\sigma \partial_l G_{ji})   -   (    \nu (\theta) \partial^\sigma \partial_l \partial_i \partial_j \pi) | \partial^\sigma \partial_l (\nu (\theta) \partial_i u^j)) \right] \\
& =  \sum_{  |\sigma | \leq s-2 } \left[  (    \partial_j (\nu (\theta)) \partial^\sigma \partial_l \partial_i  \pi) | \partial^\sigma \partial_l (\nu (\theta) \partial_i u^j)) +  (    \nu (\theta) \partial^\sigma \partial_l \partial_i  \pi) | \partial^\sigma \partial_l ( \partial_j (\nu (\theta)) \partial_i u^j)) \right.\\
&\quad \left. - (    \partial_j ( \nu (\theta)) \partial^\sigma \partial_l \partial_i \pi) | \partial^\sigma \partial_l G_{ji})  - (    \nu (\theta) \partial^\sigma \partial_l \partial_i  \pi) | \partial^\sigma \partial_l \partial_j G_{ji}) \right] \\
&=: M.11.21+ \cdots + M.11.24. 
\end{align*}
By Lemmas~\ref{Lem: Moser}, \ref{Lem: Nemyskii} and  \ref{Lem: Lap pressure est}, we have
\begin{align*}
M.11.21 &\leq C \| \nabla    \nu (\theta) \|_{H^1} \|\nabla^2 \pi \|_{H^{s-2}} \| \nu (\theta) \nabla u  \|_{H^{s }} \\
&\leq K (\| \theta   \|_\infty  ) \| \theta - \theta_* \|_{H^s}\|\nabla^2 \pi \|_{H^{s-2}} ( \| \nabla u \|_{H^s} + \|\nabla u \|_\infty \| \nabla^{s } \theta \|_{L_2} ) \\
& \leq K (\| \theta   \|_\infty  ) \left(  \cE(t)^{1/2}    +\cE(t)^{3/2}  \right) \left(1 + \sum_{k=2}^\infty k(k+1) C_s^k   \|G\|_{H^s}^{k-1} \right) \cD(t) .
\end{align*}
Similarly,  
\begin{align*}
M.11.22 &\leq K (\| \theta   \|_\infty  )\|\nabla^2 \pi \|_{H^{s-2}}  \| (\nabla u) \nabla \nu (\theta)  \|_{H^{s-1}} \\
& \leq  K(\| \theta   \|_\infty  )\|\nabla^2 \pi \|_{H^{s-2}}   (\|\nabla \theta \|_\infty \| \nabla^s u \|_{L_2} +  \|\nabla u\|_\infty \| \nabla^s \theta \|_{L_2}) \\
& \leq K (\| \theta   \|_\infty  ) \left(  \cE(t)^{1/2}  +\cE(t)  \right) \left(1 + \sum_{k=2}^\infty k(k+1) C_s^k   \|G\|_{H^s}^{k-1} \right) \cD(t)  
\end{align*}
and
\begin{align*}
M.11.23 & = K (\| \theta   \|_\infty  )  \| \nabla    \theta\|_{H^s} \|\nabla^2 \pi \|_{H^{s-2}} \| G \|_{H^s}\\
& \leq K (\| \theta   \|_\infty  ) \left(  \cE(t)^{1/2}  +\cE(t)  \right) \left(1 + \sum_{k=2}^\infty k(k+1) C_s^k   \|G\|_{H^s}^{k-1} \right) \cD(t) .
\end{align*}
It follows from \eqref{G-relation} that
\begin{align*}
M.11.24 & = -\sum_{  |\sigma | \leq s-2 }  (    \nu (\theta) \partial^\sigma \partial_l \partial_i  \pi) | \partial^\sigma \partial_l \partial_i \tr G ) \\
&= \sum_{  |\sigma | \leq s-2 } \left[ (    \partial_i (\nu (\theta)) \partial^\sigma \partial_l \partial_i  \pi) | \partial^\sigma \partial_l  \tr G ) + (    \nu (\theta) \partial^\sigma \partial_l \Delta  \pi | \partial^\sigma \partial_l   \tr G ) \right]= :M.11.241 + M.11.242.
\end{align*}
The term $M.11.241$ can be estimated in a similar way to $M.11.23$:
\begin{align*}
M.11.241 \leq K (\| \theta   \|_\infty  ) \left(  \cE(t)^{1/2}  +\cE(t)  \right) \left(1 + \sum_{k=2}^\infty k(k+1) C_s^k   \|G\|_{H^s}^{k-1} \right) \cD(t) .
\end{align*}
It follows from \eqref{Lap of pressure} that
\begin{align*}
M.11.242 &= \sum_{  |\sigma | \leq s-2 } [( \nu (\theta) \partial^\sigma \partial_l ( \nabla \cdot ( \nabla \cdot (\nu (\theta) \nabla u )) )  | \partial^\sigma \partial_l   \tr G ) 
- ( \nu (\theta) \partial^\sigma \partial_l (\nabla\cdot (u\cdot \nabla u)) | \partial^\sigma \partial_l   \tr G ) \\
&\quad 
 - 2 ( \nu (\theta) \partial^\sigma \partial_l (\Delta \tr G ) | \partial^\sigma \partial_l   \tr G ) 
  + ( \nu (\theta) \partial^\sigma \partial_l ( \nabla \cdot g(G) ) | \partial^\sigma \partial_l   \tr G )\\ 
&\quad  - ( \nu (\theta) \partial^\sigma \partial_l ( \nabla \cdot ( \nabla \cdot ( \nabla m \odot \nabla m ))) | | \partial^\sigma \partial_l   \tr G )]
=: E.1 + \cdots + E.5.
\end{align*}
According to Lemmas~\ref{Lem: Moser} and \ref{Lem: Nemyskii}, \cite[Equation~(3.11)]{ChenZhang06} and \eqref{2nd order est of G}, we have
\begin{align*}
E.1 & = -\sum_{  |\sigma | \leq s-2 }  [ ( \partial_l  (\nu (\theta)) \partial^\sigma  ( \nabla \cdot ( \nabla \cdot (\nu (\theta) \nabla u )) )  | \partial^\sigma \partial_l   \tr G )
+ (  \nu (\theta)  \partial^\sigma  ( \nabla \cdot ( \nabla \cdot (\nu (\theta) \nabla u )) )  | \partial^\sigma \Delta   \tr G ) ]\\
&\leq K (\| \theta   \|_\infty  ) \| \nabla \theta\|_\infty  \| \nu (\theta) \nabla u\|_{H^s} \| G\|_{H^s} + 	K (\| \theta   \|_\infty  ) \| \nu (\theta) \nabla u\|_{H^s} \| \nabla^2 \tr G\|_{H^{s-2}}  \\
&\leq K (\| \theta   \|_\infty  ) (1+ \| \theta -\theta_*\|_{H^s}) \| G\|_{H^s}   \| \nabla \theta\|_{H^s}  \| \nabla u\|_{H^s}  \\
&\quad +  K(\| \theta   \|_\infty  ) (1+ \| \theta -\theta_*\|_{H^s}) ( \| G\|_{H^s} +\| G\|_{H^s}^2   )\| \nabla u\|_{H^s} \| \nabla^2   G\|_{H^{s-2}} \\
&\leq K (\| \theta   \|_\infty  ) \left(  \cE(t)^{1/2}    +\cE(t)^2  \right) \cD(t).
\end{align*}
Similarly, 
\begin{align*}
E.2 & = ( \nu (\theta) \partial^\sigma  	(\nabla\cdot (u\cdot \nabla u)) | \partial^\sigma \Delta   \tr G )  + ( \partial_l \nu (\theta) \partial^\sigma  	(\nabla\cdot (u\cdot \nabla u)) | \partial^\sigma \partial_l   \tr G ) \\
&=  \sum_{\tau\leq \sigma }
\begin{pmatrix}
\sigma \\
\tau
\end{pmatrix}
\left[ ( \nu (\theta) (\nabla\cdot (\partial^{\sigma  -\tau}  u\cdot \nabla \partial^\tau u)) | \partial^\sigma \Delta \tr G ) + ( \partial_l \nu (\theta) (\nabla\cdot (\partial^{\sigma  -\tau}  u\cdot \nabla \partial^\tau u)) | \partial^\sigma \partial_l   \tr G ) \right] \\
& \leq K (\| \theta   \|_\infty  ) \| u \|_{H^s}\| \nabla u \|_{H^s} \left(    \| \nabla^2   G\|_{H^{s-2}} + \|G\|_{H^s} \|\nabla \theta \|_{H^s}     \right) \\
& \leq K (\| \theta   \|_\infty  ) \left(   \cE(t)^{1/2} +\cE(t)    \right) \cD(t) 
\end{align*}
and
\begin{align*}
E.3&= 2 ( \nu (\theta) \partial^\sigma    \Delta \tr G  | \partial^\sigma \Delta   \tr G )  + 2 (\partial_l \nu (\theta) \partial^\sigma    \Delta \tr G  | \partial^\sigma \partial_l   \tr G )\\
&  \leq 2 \overline{\nu} \|\nabla^2 \tr G\|_{H^{s-2}}^2 + K (\| \theta   \|_\infty  )\|\nabla   \theta\|_{H^1} \| \partial^\sigma \nabla   \tr G  \|_{H^1} \|\nabla^2 \tr G\|_{H^{s-2}} \\ 
&\leq C (\|G\|_{H^s}^2 + \|G\|_{H^s}^4 ) \| \nabla^2 G \|_{H^{s-2}}^2 +  K (\| \theta   \|_\infty  )\|   G  \|_{H^s} \|\nabla   \theta\|_{H^s}  \|\nabla^2   G\|_{H^{s-2}} \\ 
& \leq K (\| \theta   \|_\infty  ) \left(   \cE(t)^{1/2}+ \cE(t)^3  \right) \cD(t) 
\end{align*}
and in view of \eqref{est of gG}
\begin{align*}
E.4 & =-( \nu (\theta) \partial^\sigma   ( \nabla \cdot g(G) ) | \partial^\sigma \Delta  \tr G ) -( \partial_l \nu (\theta) \partial^\sigma   ( \nabla \cdot g(G) ) | \partial^\sigma \partial_l   \tr G )\\
& \leq K (\| \theta   \|_\infty  )  \| \nabla \cdot g(G) \|_{H^{s-2}} \|\nabla^2 \tr G\|_{H^{s-2}} + K (\| \theta   \|_\infty  )\|\nabla   \theta\|_{H^1} \| \partial^\sigma \nabla   \tr G  \|_{H^1} \| \nabla \cdot g(G) \|_{H^{s-2}}\\   
&\leq K (\| \theta   \|_\infty  )\sum_{k=2}^\infty k(k+1) C_s^k   \|G\|_{H^s}^{k-2} (  \cE(t)^{1/2}  +  \cE(t)^{3/2} ) \cD(t) 
\end{align*}
and
\begin{align*}
E.5 & =( \nu (\theta) \partial^\sigma   ( \nabla \cdot ( \nabla \cdot ( \nabla m \odot \nabla m ))) |   \partial^\sigma \Delta   \tr G )  +( \partial_l \nu (\theta) \partial^\sigma   ( \nabla \cdot ( \nabla \cdot ( \nabla m \odot \nabla m ))) |   \partial^\sigma  \partial_l    \tr G )  \\
&\leq K (\| \theta   \|_\infty  ) \| \nabla m \odot \nabla m  \|_{H^s} \|\nabla^2 \tr G\|_{H^{s-2}}  + K (\| \theta   \|_\infty  )\|\nabla   \theta\|_{H^1} \| \partial^\sigma \nabla   \tr G  \|_{H^1} \| \nabla m \odot \nabla m  \|_{H^s}\\
&\leq K (\| \theta   \|_\infty  ) \|\nabla m\|_\infty \| \nabla^{s+1} m \|_{L_2} (\|\nabla^2  G\|_{H^{s-2}} + \|G\|_{H^s} \|\nabla \theta \|_{H^s} ) \\
&\leq K (\| \theta   \|_\infty  )\left(   \cE(t)^{1/2} +  \cE(t) \right) \cD(t).
\end{align*}
In sum,
\begin{equation}
\label{est M11}
M.11\leq K (\| \theta   \|_\infty  ) \left[  \sum_{k=2}^\infty k(k+1) C_s^k   \|G\|_{H^s}^{k-2} (  \cE(t)^{1/2}  +  \cE(t)^{2}) +  \cE(t)^{1/2}+ \cE(t)^3  \right] \cD(t) 
\end{equation}
Lastly, in virtue of Lemma~\ref{Lem: Commutator}, we have
\begin{align*}
M.12 &=  -  ( \cR^\sigma_\nu \Delta M | \partial^\sigma \Delta M )  \leq K(\|\theta\|_\infty ) \|\nabla \theta\|_{H^{s-1}} \| \Delta M \|_{H^{s-2}}^2 \leq K(\|\theta\|_\infty ) \cE(t)^{1/2} \cD(t) .
\end{align*}
Combining \eqref{est M1}-\eqref{est M11}, we conclude that
\begin{equation}
\label{higher order M final}
\begin{split}
&\frac{1}{2} \frac{d}{dt} \left[  \|\nabla M \|_{H^{s-2}}^2 + \|\nabla u \|_{H^{s-2}}^2  \right]  + \underline{\nu}  \| \nabla^2 M \|_{H^{s-2}}^2  \\
& \leq K (\| \theta   \|_\infty  ) \left[  \sum_{k=2}^\infty k(k+1) C_s^k   \|G\|_{H^s}^{k-2} (  \cE(t)^{1/2}  + \cE(t)^2 ) +  \cE(t)^{1/2}+ \cE(t)^3  \right] \cD(t) .
\end{split}
\end{equation}


\subsection{Control higher order norms}
Gathering \eqref{higher order u and G final}, \eqref{higher order theta-final},   \eqref{Higher order est m-eq final}, and  \eqref{higher order M final}, we derive
\begin{equation}
\label{EE global existence 1}
\begin{split}
&\frac{1}{2} \frac{d}{dt}\left[ \| \nabla u(t)\|_{H^{s-1}}^2 + \|\nabla u(t)\|_{H^{s-2}}^2 +  \|\nabla G (t) \|_{H^{s-1}}^2  + \|\nabla M (t) \|_{H^{s-2}}^2  +    \| \nabla  m(t)\|_{H^s}^2 +  \|\theta (t) -\theta_* \|_{H^s}^2 \right]  \\
& \quad  + \underline{\nu }  \left[ \|\nabla^2 u(t) \|_{H^{s-1}}^2 + \| \nabla^2 M(t) \|_{H^{s-2}}^2  \right] + \underline{\alpha }\| \Delta m(t) \|_{H^s}^2  + \underline{\kappa} \|\nabla \theta \|_{H^s}^2  \\
& \leq  K (\| \theta  \|_\infty  ) \left[  \sum_{k=2}^\infty k(k+1) C_s^k   \|G\|_{H^s}^{k-2} (  \cE(t)^{1/2}  + \cE(t)^2) +  \cE(t)^{1/2}+ \cE(t)^3  \right] \cD(t)
\end{split}
\end{equation}
for all $t\in [0,\widehat{T})$. 
Adding up \eqref{Conserved quantity} and \eqref{EE global existence 1}  yields
\begin{equation}
\label{EE global existence 3}
\begin{split}
&   \frac{1}{2} \frac{d}{dt} \widetilde{\cE}(t)  + \cD(t) \\
&\quad \leq  K (\|  \theta   \|_\infty  ) \left[  \sum_{k=2}^\infty k(k+1) C_s^k   \|G\|_{H^s}^{k-2} (  \cE(t)^{1/2}  + \cE(t)^2) +  \cE(t)^{1/2}+ \cE(t)^3  \right] \cD(t),
\end{split}
\end{equation}
where 
\begin{align*}
\widetilde{\cE}(t) :& =  \| u(t)\|_{H^s}^2 + \|\nabla u(t)\|_{H^{s-2}}^2 + \| H (t)\|_{L_2}^2  + \|\nabla G (t) \|_{H^{s-1}}^2  + \|\nabla M (t) \|_{H^{s-2}}^2 +   
\|\nabla m(t)\|_{L_2}^2  \\
&\quad    +    \|\nabla m(t)\|_{H^s}^2  +  \|\theta (t) - \theta_* \|_{H^s}^2 .
\end{align*}
Fix $\lambda \in \left( 0,  C_s^{-1}   \right) $  
and $\sM>0$ such that whenever $\cE(t) \leq \cE(0) + 2 \sM $,
$$
K(\| \theta(t)\|_{\infty} ) \leq K_0
$$
for some $1\leq K_0<\infty$, where $C_s$ is the constant in \eqref{est of gG-0}. 
With
$
C(\lambda):=C_s^2 \sum_{k=2}^\infty k(k+1)  ( C_s \lambda)^{k-2}
$
we define
\begin{equation}
\begin{aligned}
\eta: &= \min \left\{ \left(\frac{1}{4K_0(C(\lambda) +1 )} \right)^2 ,  \lambda^2 \,  , \cE(0) + 2\sM  \right\} \\
T_\lambda : &= \sup \left\{ T\in [0,\widehat{T}):   \sup_{t\in [0,T]} \cE(t)  < \eta \right\}.
\end{aligned}
\end{equation}
Since 
\begin{equation}
\label{G_H_s}
\| G(t) \|_{H^s}\le \cE(t)^{1/2}\le \lambda<1/C_s, \quad
\text{for } t\in [0, T_\lambda),
\end{equation}
we have
\[
\sum_{k=2}^\infty k(k+1) C_s^k   \|G(t) \|_{H^s}^{k-2} < C_s^2 \sum_{k=2}^\infty k(k+1)  ( C_s   \lambda)^{k-2}=:C(\lambda).
\]
Then for $t\in [0,T_\lambda)$, we have, as $\eta\le 1$, 
\begin{align*}
 K (\|  \theta   \|_\infty  ) \left[  \sum_{k=2}^\infty k(k+1) C_s^k   \|G\|_{H^s}^{k-2} (  \cE(t)^{1/2}  + \cE(t)^2 ) +  \cE(t)^{1/2}+ \cE(t)^3  \right] 
 \le 2 K_0 (C(\lambda)+1)\eta^{1/2}  \leq  \frac{1}{2}.
\end{align*}
Substituting this estimate into \eqref{EE global existence 3} yields
$$
\frac12\frac{d}{dt}\widetilde {\cE} (t) + \cD (t) \le \frac12 \cD(t), \qquad t\in [0, T_\lambda).
$$
Therefore,
\begin{align*}
& \frac{d}{dt} \widetilde{\cE} (t) \leq 0,\qquad  t\in [0,T_\lambda).
\end{align*}
Note that $\nabla M = (\nabla u)\otimes (\nabla  \nu (\theta) )+ \nu (\theta) \nabla^2 u - \nabla G^{\sT}$. \eqref{Sobolev embedding}, Lemmas~\ref{Lem: Moser} and \ref{Lem: Nemyskii} imply
\begin{align*}
\|\nabla M_0\|_{H^{s-2}}^2  \leq   K_1  ( \| u_0 \|_{H^s}^2  + \|\theta_0 - \theta_*\|_{H^{s }}^2+ \| G_0\|_{H^s}^2  ) .
\end{align*}
for some $K_1$  non-decreasing with respect to $\cE(0) $.
Employing  \eqref{G and H},    whenever $t\in [0,T_\lambda)$, it holds
\begin{align*}
& \quad \cE(t) \\
& \leq \|u(t)\|_{H^s}^2 +  2 \bar{\nu}^2 \| \nabla u(t) \|_{H^{s-2}}^2 +   3\|G(t)\|_{L_2}^2  + \|\nabla G(t)\|_{H^{s-1}}^2  +  \|\nabla M(t)\|_{H^{s-2}}^2 + \| \nabla m(t) \|_{L_2}^2 \\
&\quad +  \| \nabla m(t) \|_{H^s}^2 +   \| \theta - \theta_* \|_{H^s}^2   \\
& \leq (1+2\bar{\nu}^2 + 3 \Lambda_0^2) \widetilde{\cE}(t) \\
& \leq (1+2 \bar{\nu}^2 + 3 \Lambda_0^2)\widetilde{\cE}(0) \\
& \leq (1+2 \bar{\nu}^2 + 3 \Lambda_0^2) \left[ (2+K_1) \| u_0\|_{H^s}^2   +   (1+K_1+ \Lambda_0^2)\|  G_0\|_{H^s}^2    +          2 \|\nabla m_0\|_{H^s}^2  +  (1+K_1)\|\theta_0 - \theta_* \|_{H^s}^2 \right] \\
& \leq C_2\left[  \| u_0\|_{H^s}^2     +  \|  F_0 - I_N\|_{H^s}^2    +           \|\nabla m_0\|_{H^s}^2  +  \|\theta_0 - \theta_* \|_{H^s}^2 \right] .
\end{align*}
where $C_2=(1+2 \bar{\nu}^2 + 3 \Lambda_0^2) (2+K_1 +\Lambda_0^2)\Lambda_s^2.$

Choosing $ \| u_0\|_{H^s}^2     +  \|  F_0 - I_N\|_{H^s}^2    +      \|\nabla m_0\|_{H^s}^2  +  \|\theta_0 - \theta_* \|_{H^s}^2 \le C_2 $  sufficiently small, we can always achieve
\begin{align*}
C_2 
\left[  \| u_0\|_{H^s}^2     +  \|  F_0 - I_N\|_{H^s}^2    +     \|\nabla m_0\|_{H^s}^2  +  \|\theta_0 - \theta_* \|_{H^s}^2 \right]  < \frac{\eta}{2}  .
\end{align*}
By the definition of $T_\lambda$, this implies that $T_\lambda=\widehat{T}$.
From \eqref{G_H_s} and the definition of $\widehat{T}$, we further obtain $\widehat{T}=T_*$.
In view of Theorem~\ref{Thm: Local wellposedness}, we thus infer that $T_*= +\infty$. 
This completes the proof of Theorem~\ref{Thm: global wellposedness}.

\appendix

\section{Existence of a Solution to the linearized system}\label{Appendix:existence}

In the appendix, we will give a proof for Proposition~\ref{prop: existence linear}. 
Focus will be given to the $u$ and $\theta$-equations. The $m$-equation can be treated
in a similar way.

\begin{proposition}\label{prop:appendix existence}
Let $s\in \mathbb N$ with $s\ge 2$. 
We consider  the Cauchy problem on $\mathbb R^N$
\begin{equation}\label{appendix:theta}
u_t-\nabla \cdot (a\nabla u)=f \qquad \text{in }(0,T)\times \mathbb R^N,
\qquad
u(0)=u_0,
\end{equation}
where
\begin{equation}
\label{appendix:a cond}
\begin{aligned}
&  a\in L_\infty(J_T\times \bR^N),\qquad  \nabla a \in \bE^{s-1}(J_T) ,   \\
 & 0<c_0 I_N \le a(t,x)\le c_1 I_N \ \ \text{for all }\  (t,x)\in [0,T]\times \mathbb R^N, \\
& f\in L_2((0,T); H^{s-1}),\quad u_0\in H^s.
\end{aligned}
\end{equation}
Then \eqref{appendix:theta} admits a unique solution satisfying
\(
u\in \bE^{s}(J_T) .
\)
\end{proposition}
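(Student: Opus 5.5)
Our plan is to prove Proposition~\ref{prop:appendix existence} by combining a priori energy estimates with a Galerkin/mollification approximation and a continuity-in-the-coefficient argument. First we regularize: let $a_\varepsilon$ be a space–time mollification of $a$ (extended suitably in time), $f_\varepsilon$ and $u_{0,\varepsilon}$ smooth approximations of $f$ in $L_2((0,T);H^{s-1})$ and of $u_0$ in $H^s$. The bounds $c_0 I_N\le a_\varepsilon\le c_1 I_N$ persist, and $\|\nabla a_\varepsilon\|_{\bE^{s-1}(J_T)}\le C\|\nabla a\|_{\bE^{s-1}(J_T)}$. For smooth, uniformly elliptic coefficients the problem $u_t-\nabla\cdot(a_\varepsilon\nabla u)=f_\varepsilon$ has a unique smooth solution with all Sobolev norms finite. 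This follows from classical parabolic theory, e.g.\ a Faedo–Galerkin scheme in $H^{s+1}$ or the evolution-operator theory for nonautonomous parabolic problems (cf.\ \cite{PruSim16}). So existence reduces to $\varepsilon$-uniform bounds.

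The core step is the a priori estimate in $\bE^s(J_T)$, done exactly as in the derivation of \eqref{EE linear theta-derivative}, now including $|\sigma|=0$. Apply $\partial^\sigma$ for $|\sigma|\le s$, test with $\partial^\sigma u$, and use ellipticity to obtain the term $c_0\|\nabla\partial^\sigma u\|_{L_2}^2$. The commutator $(\cR^\sigma_{a}\nabla u\,|\,\partial^\sigma\nabla u)$ is controlled as in Section~\ref{sec.local.theta.est}. We use Lemma~\ref{Lem: Nemyskii}, Lemma~\ref{Lem: Commutator}, \eqref{G-N ineq} and \eqref{W1 est}, with $\nabla a$ playing the role of $\nabla\kappa(\vartheta)$. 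By Young's inequality this gives
\[
\frac{d}{dt}\|u\|_{H^s}^2 + c\|\nabla u\|_{H^s}^2\le K\big(1+\|\nabla a\|_{H^{s}}^{2\gamma}\big)\|u\|_{H^s}^2 + C\|f\|_{H^{s-1}}^2,
\]
where the top-order $f$-term has been moved onto $\partial^\sigma\nabla u$ by one integration by parts. Since $\nabla a\in L_2((0,T);H^s)$ and $2\gamma<2$, the coefficient is integrable in time, and Gronwall yields a uniform bound on $\sup_t\|u\|_{H^s}^2+\int_0^T\|u\|_{H^{s+1}}^2$. The equation then bounds $u_t=\nabla\cdot(a\nabla u)+f$ in $L_2((0,T);H^{s-1})$. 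Here we expand $\nabla a\cdot\nabla u$ and $a\Delta u$ with Lemma~\ref{Lem: Moser} and \eqref{transport-est}, using $a\in L_\infty$ and $\nabla a\in C(J_T;H^{s-1})$ by \eqref{L-M embedding}. Together these give a uniform $\bE^s(J_T)$ bound.

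Passage to the limit is standard. Weak compactness in $\bE^s(J_T)$ and strong convergence in $L_2((0,T);H^s_{loc})$ (Aubin–Lions) let us pass to the limit in $\nabla\cdot(a_\varepsilon\nabla u_\varepsilon)$, since $a_\varepsilon\to a$ a.e.\ boundedly. Uniqueness follows from the $L_2$ energy estimate for the difference of two solutions, whose right-hand side is zero: $\frac{d}{dt}\|w\|_{L_2}^2+2c_0\|\nabla w\|_{L_2}^2\le0$.

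The main obstacle is the $s=2$ commutator term $(\partial^\sigma a\,\nabla u\,|\,\partial^\sigma\nabla u)$ with $|\sigma|=2$. Here $\partial^\sigma a$ is only in $L_2$ in space and square-integrable in time, so $\nabla u$ must be placed in $L_\infty$ via \eqref{W1 est}. That produces the sub-quadratic power $\|\nabla a\|_{H^s}^{2\gamma}$ rather than a bounded coefficient, and the Gronwall exponent must be checked to be finite, as it was in \eqref{EE linear-theta-difference}.
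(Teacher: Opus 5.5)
Your proposal is correct and follows essentially the paper's route: you mollify $a$, $f$, $u_0$, prove the $\varepsilon$-uniform $H^s$ energy estimate as in Section~\ref{sec.local.theta.est}, bound $u_t$ through the equation (the content of Lemma~\ref{Appendix_lem}), and pass to the limit using Aubin--Lions together with a.e.\ bounded convergence of $a_\varepsilon$. Two small remarks. First, the ``main obstacle'' you flag is not a real one: every commutator term involves only $\partial^\tau a$ with $|\tau|\le s$, and these are bounded in $L_\infty(J_T;L_2)$ because $\nabla a\in C(J_T;H^{s-1})$ by \eqref{L-M embedding}; so the Gronwall coefficient is a constant, as in \eqref{EE linear theta-derivative}, and the factor $\|\nabla a\|_{H^s}^{2\gamma}$ never arises (keeping it is harmless but unnecessary). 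Second, your $L_2$ energy argument for uniqueness fills in a step that the paper asserts but does not write out.
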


Before proving this proposition, we first prove an auxiliary lemma on the regularity of the diffusion term under the given assumptions.
\begin{lemma}\label{Appendix_lem}
Under the conditions of Proposition~\ref{prop:appendix existence}, if
$ u\in L_2((0,T);H^{s+1}),$ 
then
\begin{equation*}
    \nabla\cdot(a\nabla u)  \in  L_2((0,T);H^{s-1}(\mathbb{R}^N)).
\end{equation*}
More precisely, we have
\begin{equation}
\label{est_diffuse}
\begin{aligned}
\|\nabla\cdot(a\nabla u)\|_{L_2((0,T);H^{s-1})}
& \le  C\left( \|a\|_{L_\infty} + \|\nabla a\|_{\bE^{s-1}(J_T)}  \right) \|u\|_{L_2((0,T);H^{s+1}).} \\
\end{aligned}
\end{equation}
\end{lemma}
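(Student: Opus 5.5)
We prove \eqref{est_diffuse} pointwise in $t$ and then integrate in time. Since $\|\nabla\cdot(a\nabla u)\|_{H^{s-1}}\le C\|a\nabla u\|_{H^s}$, it suffices to bound $\|a\nabla u\|_{H^s}$ for a.e. $t$. We expand $\partial^\sigma(a\nabla u)$ with $|\sigma|\le s$ by the Leibniz rule, i.e. we split off $a\,\partial^\sigma\nabla u$ and treat the rest as the commutator $\cR^\sigma_a\nabla u$. The main term is immediate:
\[
\|a\,\partial^\sigma \nabla u\|_{L_2}\le \|a\|_{L_\infty}\|u\|_{H^{s+1}}.
\]

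For the commutator we do not want to invoke Lemma~\ref{Lem: Commutator} directly, since $a$ is a general coefficient rather than $\mu(f)$. We instead redo its proof with $\mu=\mathrm{id}$, using the Sobolev embedding \eqref{Sobolev embedding}. The terms have the form $\partial^{\sigma-\tau}a\,\partial^\tau\nabla u$ with $\tau<\sigma$, and we split them into two cases.

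\textbf{Case $|\sigma-\tau|=|\sigma|$ (i.e.\ $\tau=0$).} We estimate
\[
\|\partial^\sigma a\,\nabla u\|_{L_2}\le \|\nabla a\|_{H^{s-1}}\|\nabla u\|_{L_\infty}\le C\|\nabla a\|_{H^{s-1}}\|u\|_{H^{s+1}},
\]
using $H^2\hookrightarrow BC$ and $s\ge 2$.

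\textbf{Intermediate case $0<\tau<\sigma$.} By H\"older with $L_4\times L_4$ and $H^1\hookrightarrow L_4$,
\[
\|\partial^{\sigma-\tau}a\|_{L_4}\,\|\partial^\tau\nabla u\|_{L_4}\le C\|\nabla a\|_{H^{s-1}}\|u\|_{H^{s+1}},
\]
because $|\sigma-\tau|+1\le s$ and $|\tau|+2\le s+1$.

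Collecting the main term and both cases gives, for a.e.\ $t$,
\[
\|a\nabla u\|_{H^s}\le C\bigl(\|a\|_{L_\infty}+\|\nabla a(t)\|_{H^{s-1}}\bigr)\|u(t)\|_{H^{s+1}}.
\]

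We then square and integrate over $(0,T)$. The factor $\|\nabla a(t)\|_{H^{s-1}}$ must be bounded uniformly in $t$, which we get from the Lions–Magenes embedding \eqref{L-M embedding}: $\bE^{s-1}(J_T)\hookrightarrow C(J_T;H^{s-1})$, so
\[
\sup_t\|\nabla a(t)\|_{H^{s-1}}\le C\|\nabla a\|_{\bE^{s-1}(J_T)}.
\]
With this sup bound, the time integration yields \eqref{est_diffuse}.

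The main obstacle is this last step, namely controlling the coefficient factor in $L_\infty$ in time rather than merely $L_2$; the Lions–Magenes embedding is exactly what provides it. A second point to check is the borderline index counting when $s=2$ in the intermediate case, which is where the $L_4$ pairing is used.
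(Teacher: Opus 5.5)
Your proof is correct and is essentially the paper's argument. Both fix $t$, prove a product estimate controlled by $\|a\|_{L_\infty}$ and $\|\nabla a(t)\|_{H^{s-1}}$ using $H^2\hookrightarrow BC$ and $H^1\hookrightarrow L_4$, and then use the Lions--Magenes embedding to put $\nabla a$ in $L_\infty((0,T);H^{s-1})$ before integrating in time. The only difference is cosmetic: you bound $a\nabla u$ in $H^s$ by Leibniz, whereas the paper splits $\nabla\cdot(a\nabla u)=a\Delta u+(\nabla a)\nabla u$ and bounds each term in $H^{s-1}$, which your version does without the paper's $s=2$ versus $s\ge 3$ case split.
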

\begin{proof}
Direct computations show that
\begin{equation}
\label{appendix:split}
    \nabla\cdot(a\nabla u)
    =
    a\Delta u+(\nabla a )\nabla u.
\end{equation}
Lemma~\ref{lem: transport-est} gives
\begin{equation}\label{Apppendix:2nd_term}
\| (\nabla a ) \nabla u\|_{H^{s-1}}
\leq
C
\|\nabla a\|_{H^{s-1}}
\|\nabla u\|_{H^s}.
\end{equation}
For the first term in \eqref{appendix:split}, we use the corresponding product
estimate with one factor controlled in $L_\infty$:
\begin{equation}\label{Apppendix:1st_term}
\|a\Delta u\|_{H^{s-1}}
\leq
C
\left(
    \|a\|_{L_\infty}\|\Delta u\|_{H^{s-1}}
    +
    \|\nabla a\|_{H^{s-1}}\|\Delta u\|_{H^{s-1}}
\right).
\end{equation}
Indeed, when $s\geq 3$, \eqref{Apppendix:1st_term} follows from Lemma~\ref{Lem: Moser}(i). 
When $s=2$, \eqref{Sobolev embedding} and H\"older's inequality yield
\begin{align*}
\|a\Delta u\|_{H^{s-1}} &\le \|a\|_\infty \|\Delta u\|_{L_2} + \| \nabla a\|_{L_4} \|\Delta u \|_{L_4} + \|a\|_\infty \| \Delta u \|_{H^1} \\
&\le  C
\left(
    \|a\|_{L_\infty}\|\Delta u\|_{H^{s-1}}
    +
    \|\nabla a\|_{H^{s-1}}\|\Delta u\|_{H^{s-1}}
\right).
\end{align*}
Combining \eqref{Apppendix:2nd_term} and \eqref{Apppendix:1st_term}, we obtain, for almost every
$t\in(0,T)$,
\begin{equation*}
\begin{aligned}
\|\nabla\cdot(a\nabla u)(t)\|_{H^{s-1}}
\leq C\left(
\|a(t)\|_{L_\infty}
\|u(t)\|_{H^{s+1}}
+
\|\nabla a(t)\|_{H^{s-1}}
\|u(t)\|_{H^{s+1}}
\right).
\end{aligned}
\end{equation*}
The claim now follows from 
$$
a\in L_\infty ((0,T)\times \bR^N),\quad
\nabla a\in \bE^{s-1}(J_T) \hookrightarrow L_\infty((0,T), H^{s-1}),
$$
see~\eqref{L-M embedding}, and the assumption on $u$. 
%
\end{proof}

\begin{proof}[Proof of Proposition~\ref{prop:appendix existence}]
We only derive the a priori estimate; existence follows by standard approximation of the coefficients and data.

\medskip
\noindent
By  extending  $a$ from $[0,T]$ to $\bR$ by
\begin{align*}
a(t,x)=
\begin{cases}
a(0,x) , \quad & \text{for } t<0 \\
a(t,x), & \text{for } 0\le t\le T \\
a(T,x), & \text{for } t>T
\end{cases}
\end{align*}
and   mollifying in $(t,x)$, we can find smooth functions $a^\varepsilon$ such that
$$
0<\frac{c_0}{2} I_N\le a^\varepsilon \le 2c_1 I_N,
\qquad
\nabla a^\varepsilon \to \nabla a \quad \text{in } \bE^{s-1}(J_T),
\qquad
 a^\varepsilon \to  a \quad \text{a.e.}.
$$
In virtue of \eqref{L-M embedding}, we have additionally $\nabla a^\varepsilon \to \nabla a$ in $C(J_T;H^{s-1})$.
Likewise, choose smooth $f^\varepsilon\to f$ in $L_2((0,T);H^{s-1})$ and
$u_0^\varepsilon\to u_0$ in $H^s$.
Let $u^\varepsilon \in \bE^{s}(J_T)$ solve
$$
u_t^\varepsilon-\nabla \cdot (a^\varepsilon \nabla u^\varepsilon)=f^\varepsilon,
\qquad
u^\varepsilon(0)=u_0^\varepsilon.
$$
See \cite[Chapters 3 and 6]{PruSim16}.
It suffices to prove estimates that are independent of $\varepsilon$. 
There exists a uniform constant $M>0$ such that
$$
 \sup_{t\in[0,T]}\left( \|\nabla a^\varepsilon(t)\|_{H^{s-1}}^2 + \| a^\varepsilon(t)\|_{\infty}^2 \right) + \int_0^T \|\nabla a^\varepsilon(t)\|_{H^{s}}^2 \, dt \le M.
$$
Following the estimates in Section~\ref{sec.local.theta.est}, we obtain
\begin{align*}
\sup_{t\in[0,T]} \| u^\varepsilon (t) \|_{H^s}^2 +  \int_0^T \|\nabla u^\varepsilon(t)\|_{H^{s}}^2 \, dt \leq C_0 e^{TK(M)} \left( \| u_0 \|_{H^s}^2 + \int_0^T \|f^\varepsilon (t) \|_{H^{s-1}}^2\, dt \right).  
\end{align*}
Combining with Lemma~\ref{Appendix_lem}, this implies the existence of a uniform bound $B$ for
\begin{equation}
\label{appendix unif bdd}
 \|u^\varepsilon\|_{\bE^s(J_T)} + \|u^\varepsilon\|_{L_\infty(H^s)} \leq B.
\end{equation}
\eqref{appendix unif bdd} implies that, after extraction of a subsequence (not relabelled), we can find a function $u$ such that
$$
u^\varepsilon \rightharpoonup u \quad \text{in } L_2((0,T);H^{s+1}),
\qquad
u_t^\varepsilon \rightharpoonup u_t \quad \text{in } L_2((0,T);H^{s-1}),
$$
and weak-$*$ in $L^\infty(0,T;H^s)$.

Let $B_R \subset \mathbb R^N$ be any ball. On $B_R$, we have the following  compact embeddings
$$
H^{s+1}(B_R) \hookrightarrow H^s(B_R) \hookrightarrow H^{s-1}(B_R).
$$
In virtue of \eqref{appendix unif bdd}, the Aubin-Lions lemma, cf. \cite[Proposition III.1.3]{ShowalterBook}, implies 
$$
u^\varepsilon \to u 
\quad \text{in } L_2((0,T);H^s(B_R)).
$$
Since $R$ is arbitrary, 
$$
\nabla u^\varepsilon \to \nabla u 
\quad \text{strongly in } L_{2, \mathrm{loc}}((0,T)\times \mathbb R^N).
$$
By construction, \(|a^\varepsilon|\le C\) and
$$
a^\varepsilon \to a \quad \text{a.e. in } (0,T)\times \mathbb R^N.
$$
Hence
$$
a^\varepsilon \rightharpoonup^\ast a \quad \text{in } L^\infty((0,T)\times \mathbb R^N).
$$
Let $\phi \in C_c^\infty((0,T)\times \mathbb R^N)$. Then
\begin{equation}\label{Appendix:est_1}
\int_0^T \int_{\mathbb R^N} 
a^\varepsilon \nabla u^\varepsilon \cdot \nabla \phi
\,dx\,dt
= \int a^\varepsilon (\nabla u^\varepsilon - \nabla u)\cdot \nabla \phi
+ \int (a^\varepsilon - a)\nabla u \cdot \nabla \phi
+ \int a \nabla u \cdot \nabla \phi.
\end{equation}
We show the first two terms on RHS vanish.
First,
$$
\left|
\int a^\varepsilon (\nabla u^\varepsilon - \nabla u)\cdot \nabla \phi
\right|
\le \|a^\varepsilon\|_{L^\infty}
\|\nabla u^\varepsilon - \nabla u\|_{L_2(K)}
\|\nabla \phi\|_{L_2},
$$
where $K=\operatorname{supp}\phi$.
Since $\nabla u^\varepsilon \to \nabla u$ strongly in $L_2(K)$,
this term tends to $0$.

The second term on RHS of \eqref{Appendix:est_1} can be estimated as follows.
Since $a^\varepsilon \to a$ a.e. and is bounded,
$$
(a^\varepsilon - a)\nabla u \cdot \nabla \phi
\to 0 \quad \text{a.e.}, \quad \text{and}
\quad
|(a^\varepsilon - a)\nabla u \cdot \nabla \phi| \le C |\nabla u||\nabla \phi| \in L^1.
$$
Thus by the dominated convergence theorem,
$$
\int (a^\varepsilon - a)\nabla u \cdot \nabla \phi \to 0.
$$
Therefore,
$$
\int a^\varepsilon \nabla u^\varepsilon \cdot \nabla \phi
\to
\int a \nabla u \cdot \nabla \phi.
$$
Passing to the limit in all terms yields
$$
\partial_t u - \nabla \cdot (a\nabla u)=f
$$
in the sense of distributions. Then it follows that
$$
u \in \bE^{s}(J_T) \hookrightarrow C([0,T];H^s).
$$
\end{proof}
For the fluid velocity field, we will need the following result.
\begin{proposition}\label{prop:appendix existence-u}
Let $s\in \mathbb N$ with $s\ge 2$.
We consider
\begin{equation}\label{appendix:u}
\partial_t u-\nabla\cdot(a\nabla u)+\nabla\pi  =f,
\qquad
\nabla\cdot u =0,
\qquad
u(0)=u_0,
\end{equation}
where
\begin{equation*}
\label{appendix:a-cond-stokes}
\begin{aligned}
&  a\in L_\infty((0,T)\times \bR^N),\qquad  \nabla a \in \bE^{s-1}(J_T) ,   \\
 & 0<c_0 I_N \le a(t,x)\le c_1 I_N \ \ \text{for all }\  (t,x)\in [0,T]\times \mathbb R^N, \\
& f\in L_2((0,T); H^{s-1}),\quad u_0\in H^s_\sigma.
\end{aligned}
\end{equation*}
Then \eqref{appendix:u} admits a solution satisfying
\[
u\in\bE^s_\sigma(J_T),
\qquad
\nabla\pi\in L_2((0,T);H^{s-1}).
\]
\end{proposition}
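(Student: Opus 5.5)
The approach mirrors the proof of Proposition~\ref{prop:appendix existence}: mollify the coefficient and the data, solve a smooth generalized Stokes problem, derive $\varepsilon$-independent bounds, and pass to the limit. First, extend $a$ constantly in time outside $[0,T]$ and mollify in $(t,x)$. This produces smooth $a^\varepsilon$ with $\frac{c_0}{2}\le a^\varepsilon\le 2c_1$, $\nabla a^\varepsilon\to\nabla a$ in $\bE^{s-1}(J_T)$, and $a^\varepsilon\to a$ a.e. We also choose smooth $f^\varepsilon\to f$ in $L_2((0,T);H^{s-1})$ and $u_0^\varepsilon\to u_0$ in $H^s_\sigma$.

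For smooth variable viscosity, the problem
\[
\partial_t u^\varepsilon-\nabla\cdot(a^\varepsilon\nabla u^\varepsilon)+\nabla\pi^\varepsilon=f^\varepsilon,\qquad \nabla\cdot u^\varepsilon=0
\]
is a quasilinear-free, linear parabolic Stokes-type system. Solvability in $\bE^s_\sigma(J_T)$ follows from $L_p$-maximal regularity for generalized Stokes operators with bounded uniformly continuous coefficients, cf.~\cite[Chapter 7]{PruSim16}. Alternatively, one can project with $\PH$ and treat $\PH\nabla\cdot(a^\varepsilon\nabla\,\cdot)$ as a lower-order perturbation of $a^\varepsilon\PH\Delta$, then argue by freezing coefficients.

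The uniform estimate comes from applying $\partial^\sigma$ with $|\sigma|\le s$ and testing with $\partial^\sigma u^\varepsilon$. Since $\partial^\sigma u^\varepsilon$ is divergence-free, the pressure term drops out. The remaining terms are handled exactly as in Section~\ref{sec.local.theta.est} (cf.~\eqref{EE linear theta-derivative}): coercivity gives $\frac{c_0}{2}\|\nabla u^\varepsilon\|_{H^s}^2$, and the commutators $\cR^\sigma_{a^\varepsilon}\nabla u^\varepsilon$ are absorbed using Lemma~\ref{Lem: Commutator}, \eqref{W1 est} and Young's inequality. Gronwall's inequality then yields
\[
\sup_{t\in[0,T]}\|u^\varepsilon(t)\|_{H^s}^2+\int_0^T\|\nabla u^\varepsilon\|_{H^s}^2\,dt\le C_0e^{TK(M)}\Bigl(\|u_0\|_{H^s}^2+\int_0^T\|f^\varepsilon\|_{H^{s-1}}^2\,dt\Bigr).
\]

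The main obstacle is the bound on the pressure, which is needed to control $\partial_t u^\varepsilon$. Taking the divergence of the equation gives
\[
\Delta\pi^\varepsilon=\nabla\cdot f^\varepsilon+\nabla\cdot\nabla\cdot(a^\varepsilon\nabla u^\varepsilon).
\]
Because $\nabla\cdot u^\varepsilon=0$, we have $\nabla\cdot(a^\varepsilon\nabla u^\varepsilon)=a^\varepsilon\Delta u^\varepsilon+(\nabla a^\varepsilon)\nabla u^\varepsilon$. Write $\nabla\pi^\varepsilon=(I-\PH)\bigl(f^\varepsilon+\nabla\cdot(a^\varepsilon\nabla u^\varepsilon)\bigr)$. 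Since $I-\PH$ is a Fourier multiplier of order zero, it is bounded on $H^{s-1}$. Lemma~\ref{Appendix_lem} therefore gives
\[
\|\nabla\pi^\varepsilon\|_{L_2(H^{s-1})}\le \|f^\varepsilon\|_{L_2(H^{s-1})}+C\bigl(\|a\|_\infty+\|\nabla a\|_{\bE^{s-1}}\bigr)\|u^\varepsilon\|_{L_2(H^{s+1})}.
\]
Reading $\partial_t u^\varepsilon$ off from the equation then bounds it uniformly in $L_2((0,T);H^{s-1})$. Together with the energy estimate, this gives uniform bounds on $u^\varepsilon$ in $\bE^s_\sigma(J_T)\cap L_\infty(H^s)$ and on $\nabla\pi^\varepsilon$ in $L_2(H^{s-1})$.

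To pass to the limit, extract weakly convergent subsequences in these spaces. The weak limit remains divergence-free, and $\nabla\pi^\varepsilon\rightharpoonup\nabla\pi$ is still a gradient. By Aubin--Lions on balls, $\nabla u^\varepsilon\to\nabla u$ strongly in $L_{2,\mathrm{loc}}$. The identification $a^\varepsilon\nabla u^\varepsilon\to a\nabla u$ in distributions then follows by the dominated convergence argument of \eqref{Appendix:est_1}. The limit $(u,\pi)$ solves \eqref{appendix:u} with $u\in\bE^s_\sigma(J_T)$ and $\nabla\pi\in L_2((0,T);H^{s-1})$, and the initial datum is attained because $\bE^s(J_T)\hookrightarrow C(J_T;H^s)$ by \eqref{L-M embedding}.
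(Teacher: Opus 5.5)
Your proposal is correct and follows essentially the same route as the paper: mollify $a$, $f$, $u_0$; derive $\varepsilon$-uniform $H^s$ energy bounds as for the scalar problem, with the pressure dropping out; bound $\partial_t u^\varepsilon$ and $\nabla\pi^\varepsilon$ in $L_2((0,T);H^{s-1})$ via the Helmholtz projection and Lemma~\ref{Appendix_lem}; and pass to the limit using Aubin--Lions and dominated convergence. The only difference is that you solve the regularized problem by maximal regularity rather than by the paper's Galerkin scheme, which is immaterial to the argument.
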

\begin{proof}
As in the proof of Proposition~\ref{prop:appendix existence}, we first
approximate $a,f$, and $u_0$ by smooth functions
$a^\varepsilon,f^\varepsilon$, and $u_0^\varepsilon$, respectively,
such that
$$
\frac{c_0}{2}\leq a^\varepsilon\leq 2c_1,
\qquad
\nabla a^\varepsilon\to\nabla a
\quad\text{in }\bE^{s-1}(J_T),
$$
$$
f^\varepsilon\to f
\quad\text{in }L_2((0,T);H^{s-1}),
\qquad
u_0^\varepsilon\to u_0
\quad\text{in }H^s_\sigma.
$$
For each $\varepsilon>0$, the standard Galerkin approximation in the
space of divergence-free vector fields, cf. \cite{TemamBook}, yields a smooth solution
$(u^\varepsilon,\pi^\varepsilon)$ of
\begin{equation}\label{appendix:u-eps}
\partial_tu^\varepsilon
-\nabla\cdot(a^\varepsilon\nabla u^\varepsilon)
+\nabla\pi^\varepsilon
=f^\varepsilon,
\qquad
\nabla\cdot u^\varepsilon =0,
\qquad
u^\varepsilon(0) =u_0^\varepsilon.
\end{equation}
As in the proof of Proposition~\ref{prop:appendix existence}, we can derive
\begin{equation}\label{appendix:u-uniform}
\sup_{t\in[0,T]}
\|u^\varepsilon(t)\|_{H^s}^2
+
\int_0^T
\|u^\varepsilon(t)\|_{H^{s+1}}^2\,dt
\leq
C,
\end{equation}
where $C$ is independent of $\varepsilon$.

We next estimate the time derivative and the pressure.
Recall that the Helmholtz projection
$$
\PH:H^r(\mathbb R^N;\mathbb R^N)
\longrightarrow H^r_\sigma(\mathbb R^N;\mathbb R^N)
$$
is a bounded linear operator for every $r\ge 0$.
Applying $\PH$ to \eqref{appendix:u-eps} gives
$$
\partial_tu^\varepsilon
=
\PH\left[
\nabla\cdot(a^\varepsilon\nabla u^\varepsilon)
+f^\varepsilon
\right].
$$
Since $\PH$ is bounded on $H^{s-1}$, it follows that
\begin{align*}
\|\partial_tu^\varepsilon\|_{H^{s-1}}
&\leq
C\left(
\|\nabla\cdot(a^\varepsilon\nabla u^\varepsilon)\|_{H^{s-1}}
+
\|f^\varepsilon\|_{H^{s-1}}
\right)
\\
&\leq
K\left(
\|a^\varepsilon\|_{L_\infty} +
\|\nabla a^\varepsilon\|_{H^{s-1}}
\right)
\|u^\varepsilon\|_{H^{s+1}}
+
C\|f^\varepsilon\|_{H^{s-1}}.
\end{align*}
Similarly, applying $I-\PH$ to \eqref{appendix:u-eps} yields
$$
\nabla\pi^\varepsilon
=
(I-\PH)
\left[
f^\varepsilon
+
\nabla\cdot(a^\varepsilon\nabla u^\varepsilon)
\right]
$$
which gives
\begin{align*}
\|\nabla\pi^\varepsilon\|_{H^{s-1}}
&\leq
C\left(
\|f^\varepsilon\|_{H^{s-1}}
+
\|\nabla\cdot(a^\varepsilon\nabla u^\varepsilon)\|_{H^{s-1}}
\right)
\\
&\leq
K\left(
\|a^\varepsilon\|_{L_\infty} +
\|\nabla a^\varepsilon\|_{H^{s-1}}
\right)
\|u^\varepsilon\|_{H^{s+1}}
+
C\|f^\varepsilon\|_{H^{s-1}}.
\end{align*}
Consequently,
$$
\{\partial_tu^\varepsilon\}_{\varepsilon}
\quad\text{and}\quad
\{\nabla\pi^\varepsilon\}_{\varepsilon}
\qquad \text{are uniformly bounded in }L_2((0,T);H^{s-1}).
$$
Moreover, $\{u^\varepsilon\}_{\varepsilon}$ is bounded in $\bE^s_\sigma(J_T).$
Thus, after passing to a subsequence,
\begin{align*}
u^\varepsilon\rightharpoonup u
\quad\text{in }\bE^s_\sigma(J_T)
\qquad
\text{and}
\qquad
\nabla\pi^\varepsilon\rightharpoonup\nabla\pi
\quad \text{in }L_2((0,T);H^{s-1}).
\end{align*}
As in the proof of Proposition~\ref{prop:appendix existence},
the Aubin-Lions lemma gives, for every ball $B_R\subset\mathbb R^N$,
$$
u^\varepsilon\to u
\quad\text{strongly in }
L_2((0,T);H^s(B_R)).
$$
This local strong convergence, together with
$a^\varepsilon\to a$ almost everywhere and the uniform
$L_\infty$-bound for $a^\varepsilon$, allows  to pass to the limit
in the diffusion term. We therefore obtain
$$
\partial_tu-\nabla\cdot(a\nabla u)+\nabla\pi=f,
\qquad
\nabla\cdot u=0
$$
in the sense of distributions, with $u(0)=u_0$.
Consequently,
$$
u\in\bE^s_\sigma(J_T),
\qquad
\nabla\pi\in L_2((0,T);H^{s-1}),
$$
as asserted.
\end{proof}

\begin{proof}[Proof of Proposition~\ref{prop: existence linear}]

For notational brevity, we put 
\[
\sg=( g_{u},g_{H},g_{\theta}, g_{m})= (G_u(\sz), G_H (\sz), G_\theta (\sz), G_m(\sz)),
\quad   \sz \in R_T(\sd_0,\sd_1,  M_1,M_2,M_3). 
\]
Under the conditions of Proposition~\ref{prop: existence linear},  the following regularity properties hold
\begin{equation}
\label{nonlinear term regularity}
\begin{split}
\sg \in L_2((0,T); H^{s-1}) \times C(J_T;H^{s-1})\times L_2((0,T); H^{s-1}) \times L_2((0,T); H^{s}).
\end{split}
\end{equation}
The existence of a solution $(u,H)\in \bE^s_\sigma (J_T)\times \bC^s(J_T)$ follows from Proposition~\ref{prop:appendix existence-u} and \cite[Theorem~3.19]{BahouriCheminDanchinBook}. 
We will prove the existence of a solution $\theta$ to \eqref{magneto sys abstract linear}$_4$.
The assertion for the $m$-equation follows in a similar manner. 

\medskip\noindent
Suppose $\theta_0$ satisfies the assumptions of  Proposition~\ref{prop: existence linear}. 
As in Section 4.2,  let   $\theta_1 := e^{t\Delta} \theta_0$.
Then
\begin{equation} 
\label{semigroup}
\theta_1\in L_\infty((0,T)\times \bR^N),\quad 
\partial_t \theta_1 =   e^{t\Delta} \Delta \theta_0\in  L_2((0,T);H^{s-1}), \quad \nabla \theta_1 = e^{t\Delta} \nabla\theta_0\in \bE^{s-1}(J_T),
\end{equation} 
which follows from the $L_p$-maximal regularity property of the Laplace operator, cf. \cite[Chapter 6]{PruSim16}. 
Because $\theta_1$ satisfies the heat equation
$$
    \partial_t\theta_1
    =\Delta\theta_1,
    \qquad
    \theta_1(0)=\theta_0,
$$
it follows that
$
    \theta_1(t)-\theta_0
    =
    \int_0^t\Delta\theta_1(\tau)\,d\tau.
$
Consequently, the Cauchy-Schwarz inequality in time gives
\begin{align*}
    \|\theta_1(t)-\theta_0\|_{L_2}^2
    &\leq
    t\int_0^t
    \|\Delta\theta_1(\tau)\|_{L_2}^2\,d\tau.
\end{align*}
To estimate the RHS, we test the heat equation by
$-\Delta\theta_1$. Integration by parts yields
$$
    \frac{1}{2}\frac{d}{dt}
    \|\nabla\theta_1(t)\|_{L_2}^2
    +
    \|\Delta\theta_1(t)\|_{L_2}^2
    =0.
$$
Integrating this identity over $(0,t)$ gives
\begin{align*}
    \int_0^t
    \|\Delta\theta_1(\tau)\|_{L_2}^2\,d\tau
    =
    \frac{1}{2}\|\nabla\theta_0\|_{L_2}^2
    -
    \frac{1}{2}
    \|\nabla\theta_1(t)\|_{L_2}^2   
    \leq
    \frac{1}{2}\|\nabla\theta_0\|_{L_2}^2.
\end{align*}
Therefore,
\begin{equation}
\label{est_initial}
    \|\theta_1(t)-\theta_0\|_{L_2}^2
    \leq
    \frac{t}{2}\|\nabla\theta_0\|_{L_2}^2.
\end{equation}
Let $\phi=\theta - \theta_1$. Then $\phi$ solves the equation
\begin{equation}
\label{magneto sys theta}
\partial_t \phi -  \nabla \cdot (\kappa (\vartheta) \nabla \phi)    
 =  g_\theta  +  \nabla \cdot (\kappa (\vartheta) \nabla \theta_1) -\partial_t \theta_1, 
 \quad  \phi (0)   = 0 .
\end{equation} 
For  any $\sz \in R_T(\sd_0,\sd_1,   M_1,M_2,M_3)$, the conditions in \eqref{appendix:a cond} are satisfied for $\kappa(\vartheta)$ in virtue of Lemma~\ref{Lem: Nemyskii}.
Hence, Lemma~\ref{Appendix_lem} and \eqref{semigroup} imply 
$$ \nabla \cdot (\kappa (\vartheta) \nabla \theta_1) -\partial_t \theta_1 \in L_2((0,T);H^{s-1}).$$
Proposition~\ref{prop:appendix existence} then shows that \eqref{magneto sys theta} has a solution $\phi\in \bE^s(J_T)$. 
Together with \eqref{semigroup}, this implies 
\begin{equation*}
\begin{aligned}
\nabla \theta &= \nabla \theta_1 + \nabla \phi \in \bE^{s-1}(J_T) \hookrightarrow C([0,T]; H^{s-1}), \\
\partial_t \theta &= \partial_t \theta_1 + \partial_t \phi \in L_2((0,T); H^{s-1}).
\end{aligned}
\end{equation*}
Moreover, $\theta$ is the (unique) solution of \eqref{magneto sys abstract linear}$_4$ with initial value $\theta_0$. 
Next we observe that  \eqref{est_initial}  yields
\begin{align*}
\| \theta(t) - \theta_0\|_{L_2}^2  \leq 2\| \theta_1(t) - \theta_0\|_{L_2}^2 + 2\|\phi(t)\|_{L_2}^2   
 \leq T\|\nabla \theta_0\|_{L_2}^2 + 2\sup_{t\in [0,T]} \|\phi(t)\|_{L_2}^2.
\end{align*}
Similar to \eqref{est h L infty}, we can then derive the estimate
\begin{align*}
\| \theta(t)\|_\infty & \le \| \theta(t) - \theta_0\|_\infty + \|\theta_0\|_\infty  \\
&\le C \| \theta(t) - \theta_0\|_{L_2} + C \| \nabla  \theta (t)\|_{H^1}  + \|\nabla \theta_0\|_{H^1} +   \|\theta_0\|_\infty  .
\end{align*}
In summary, we have shown that $\theta$ satisfies
\begin{equation*}
\label{theta-reg}
\theta \in L_\infty ((0,T)\times \bR^N), \quad \nabla\theta \in \bE^{s-1}(J_T), \quad \partial_t \theta \in  L_2((0,T); H^{s-1}).
\begin{aligned}
\end{aligned}
\end{equation*}
Consequently,  $\theta $ enjoys the properties in Proposition~\ref{prop: existence linear}. 
\end{proof}


\section*{Acknowledgements}
This work was supported by a grant from the Simons Foundation [MPS-TSM-00853237, GS].
The first author gratefully acknowledges support from NSF grants DMS-2306991 and DMS-2512104.
 

 \end{document}